\newtheorem{observation}{Remark}[section]
\newtheorem{lemma}[observation]{Lemma}  %%share counter with remark
\newtheorem{theorem}[observation]{Theorem}
\newtheorem{definition}[observation]{Definition}
\newtheorem{example}[observation]{Example}
\newtheorem{proposition}[observation]{Proposition} 
\newtheorem{corollary}[observation]{Corollary}
\newdimen\w@dth
\def\setw@dth#1#2{\setbox\z@\hbox{\scriptsize $#1$}\w@dth=\wd\z@
\setbox\@ne\hbox{\scriptsize $#2$}\ifnum\w@dth<\wd\@ne \w@dth=\wd\@ne \fi
\advance\w@dth by 1.2em}
\def\t@^#1_#2{\allowbreak\def\n@one{#1}\def\n@two{#2}\mathrel
{\setw@dth{#1}{#2}
\mathop{\hbox to \w@dth{\rightarrowfill}}\limits
\ifx\n@one\empty\else ^{\box\z@}\fi
\ifx\n@two\empty\else _{\box\@ne}\fi}}
\def\t@@^#1{\@ifnextchar_ {\t@^{#1}}{\t@^{#1}_{}}}
\def\t@left^#1_#2{\def\n@one{#1}\def\n@two{#2}\mathrel{\setw@dth{#1}{#2}
\mathop{\hbox to \w@dth{\leftarrowfill}}\limits
\ifx\n@one\empty\else ^{\box\z@}\fi
\ifx\n@two\empty\else _{\box\@ne}\fi}}
\def\t@@left^#1{\@ifnextchar_ {\t@left^{#1}}{\t@left^{#1}_{}}}
\def\two@^#1_#2{\def\n@one{#1}\def\n@two{#2}\mathrel{\setw@dth{#1}{#2}
\mathop{\vcenter{\hbox to \w@dth{\rightarrowfill}\kern-1.7ex
                 \hbox to \w@dth{\rightarrowfill}}%
       }\limits
\ifx\n@one\empty\else ^{\box\z@}\fi
\ifx\n@two\empty\else _{\box\@ne}\fi}}
\def\tw@@^#1{\@ifnextchar_ {\two@^{#1}}{\two@^{#1}_{}}}
\def\tofr@^#1_#2{\def\n@one{#1}\def\n@two{#2}\mathrel{\setw@dth{#1}{#2}
\mathop{\vcenter{\hbox to \w@dth{\rightarrowfill}\kern-1.7ex
                 \hbox to \w@dth{\leftarrowfill}}%
       }\limits
\ifx\n@one\empty\else ^{\box\z@}\fi
\ifx\n@two\empty\else _{\box\@ne}\fi}}
\def\t@fr@^#1{\@ifnextchar_ {\tofr@^{#1}}{\tofr@^{#1}_{}}}
\newdimen\W@dth
\def\setW@dth#1#2{\setbox\z@\hbox{$#1$}\W@dth=\wd\z@
\setbox\@ne\hbox{$#2$}\ifnum\W@dth<\wd\@ne \W@dth=\wd\@ne \fi
\advance\W@dth by 1.2em}
\def\T@^#1_#2{\allowbreak\def\N@one{#1}\def\N@two{#2}\mathrel
{\setW@dth{#1}{#2}
\mathop{\hbox to \W@dth{\rightarrowfill}}\limits
\ifx\N@one\empty\else ^{\box\z@}\fi
\ifx\N@two\empty\else _{\box\@ne}\fi}}
\def\T@@^#1{\@ifnextchar_ {\T@^{#1}}{\T@^{#1}_{}}}
\def\T@left^#1_#2{\def\N@one{#1}\def\N@two{#2}\mathrel{\setW@dth{#1}{#2}
\mathop{\hbox to \W@dth{\leftarrowfill}}\limits
\ifx\N@one\empty\else ^{\box\z@}\fi
\ifx\N@two\empty\else _{\box\@ne}\fi}}
\def\T@@left^#1{\@ifnextchar_ {\T@left^{#1}}{\T@left^{#1}_{}}}
\def\Tofr@^#1_#2{\def\N@one{#1}\def\N@two{#2}\mathrel{\setW@dth{#1}{#2}
\mathop{\vcenter{\hbox to \W@dth{\rightarrowfill}\kern-1.7ex
                 \hbox to \W@dth{\leftarrowfill}}%
       }\limits
\ifx\N@one\empty\else ^{\box\z@}\fi
\ifx\N@two\empty\else _{\box\@ne}\fi}}
\def\T@fr@^#1{\@ifnextchar_ {\Tofr@^{#1}}{\Tofr@^{#1}_{}}}
\def\Two@^#1_#2{\def\N@one{#1}\def\N@two{#2}\mathrel{\setW@dth{#1}{#2}
\mathop{\vcenter{\hbox to \W@dth{\rightarrowfill}\kern-1.7ex
                 \hbox to \W@dth{\rightarrowfill}}%
       }\limits
\ifx\N@one\empty\else ^{\box\z@}\fi
\ifx\N@two\empty\else _{\box\@ne}\fi}}
\def\Tw@@^#1{\@ifnextchar_ {\Two@^{#1}}{\Two@^{#1}_{}}}
\def\to{\@ifnextchar^ {\t@@}{\t@@^{}}}
\def\from{\@ifnextchar^ {\t@@left}{\t@@left^{}}}
\def\tofro{\@ifnextchar^ {\t@fr@}{\t@fr@^{}}}
\def\To{\@ifnextchar^ {\T@@}{\T@@^{}}}
\def\From{\@ifnextchar^ {\T@@left}{\T@@left^{}}}
\def\Two{\@ifnextchar^ {\Tw@@}{\Tw@@^{}}}
\def\Tofro{\@ifnextchar^ {\T@fr@}{\T@fr@^{}}}
\title{Classical Distributive Restriction Categories}
\author{Robin Cockett and Jean-Simon Pacaud Lemay}
\begin{document}
\allowdisplaybreaks

\maketitle
%\section{}
%\subsection{}

\begin{abstract} In the category of sets and partial functions, $\mathsf{PAR}$, while the disjoint union $\sqcup$ is the usual categorical coproduct, the Cartesian product $\times$ becomes a restriction categorical analogue of the categorical product: a restriction product. Nevertheless, $\mathsf{PAR}$ does have a usual categorical product as well in the form 
$A \& B := A \sqcup B \sqcup (A \times B)$. Surprisingly, asking that a {\em distributive\/} restriction category (a restriction category with restriction products $\times$ and coproducts $\oplus$) has $A \& B$ a categorical product is enough to imply that the category is a {\em classical\/} restriction category.  This is a restriction category which has joins and relative complements and,  thus, supports classical Boolean reasoning.  The first and main observation of the paper is that a distributive restriction category is classical if and only if $A \& B := A \oplus B \oplus (A \times B)$ is a categorical product in which case we call $\&$ the ``classical'' product.

In fact, a distributive restriction category has a categorical product if and only if it is a {\em classified\/} restriction category.  This is in the sense that every map $A \to B$ factors uniquely through a total map $A \to B \oplus \mathsf{1}$, where $\mathsf{1}$ is the restriction terminal object. This implies the second significant observation of the paper, namely, that a distributive restriction category has a classical product if and only if it is the Kleisli category of the exception monad $\_ \oplus \mathsf{1}$ for an ordinary distributive category.  

Thus having a classical product has a significant structural effect on a distributive restriction category.  In particular, the classical product not only provides an alternative axiomatization for being classical but also for being the Kleisli category of the exception monad on an ordinary distributive category.   \end{abstract}

\noindent \small \textbf{Acknowledgements.} The authors would like to thank Masahito Hasegawa and RIMS at Kyoto University for helping fund research visits so that the authors could work together on this project. For this research, the first named author was partially funded by NSERC, while the second named author was financially supported by a JSPS Postdoctoral Fellowship, Award \#: P21746. \\ 

We would like to dedicate this paper to Pieter Hofstra (1975-2022). 

\paragraph{From Robin:} I met Pieter in 2004, while I was visiting the University of Ottawa. I was assigned a desk in the postdoc's office of the math department and Pieter was a postdoc there at the time.  On the pretext that we would develop a restriction categorical version of recursion theory, I persuaded Pieter to come to Calgary. He actually did come and he was a postdoc in Computer Science at the University of Calgary 2005-2007.  Furthermore, we actually did introduce a formulation of recursion theory in a restriction category which we called a Turing category \cite{turing-categories}.  I am immensely proud of this foundational work. Pieter brought an uncompromising high standard of exposition to the table ... and a deep understanding of recursion theory.  We ended up writing a number of joint papers some of which are still not finished. 
Much of our work together involved restriction categories, which is, appropriately, the subject of this paper. 

Pieter, besides being a highly valued colleague, was a frequent visitor to our home and he and his family became close friends to my wife, Polly, and I.  He is greatly missed.

\paragraph{From JS:} Pieter played an important role in my life. Pieter was my first academic mentor: he taught me category theory and I did my first research project with him while I was an undergrad student at the University of Ottawa. In fact, the last lecture of my undergrad was with Pieter, where he taught us how to kill Hydras. Afterwards, Pieter became a great colleague and friend, with whom I enjoyed spending time with whenever we crossed paths at conferences and workshops. What I enjoyed most about Pieter is while we did have many interesting discussions about math, we also had great discussions about numerous other subjects during hikes or over food and drinks. Pieter will be greatly missed by myself and many others. May he rest in peace.

\tableofcontents

%%%%%%%%%%%%%%%%%%%%%%%%%%%%%%%%%%%%%%%%%%%%%%%%%%%%%%%%%%%%%%%%%%%%%

\section{Introduction}

In the category of sets and partial functions, $\mathsf{PAR}$, it is well known that while the disjoint union of sets $A \sqcup B$ is the coproduct, the Cartesian product of sets $A \times B$ on the other hand is no longer a categorical product. It is a curiosity that there is, nonetheless, a real categorical product in $\mathsf{PAR}$ which takes the form $A  \sqcup B \sqcup (A \times B)$. This raises the general question of what properties are required of a partial map category to come equipped with a categorical product of this form.

To answer this question it is expedient to utilize the formulation of partial map categories as restriction categories: these were introduced by Cockett and Lack in \cite{cockett2002restriction}. Briefly, a restriction category (Def. \ref{def:restcat}) is a category equipped with a restriction operator which associates to every map $f$ its restriction $\overline{f}$, capturing the domain of definition of $f$. The main example of a restriction category is $\mathsf{PAR}$ (Ex. \ref{ex:PAR}), but there are also many other interesting examples including the opposite category of commutative algebras and \emph{non-unital} algebra morphisms (Ex. \ref{ex:CALG}). 

As mentioned, while in $\mathsf{PAR}$ the Cartesian product $\times$ is not a categorical product, it is instead a restriction product (Def \ref{def:restprod}): this is the restriction categorical analogue of a product which is, in fact, a lax limit.  This means that it is, in general, not a categorical product as it need not be a strict limit. On the other hand, the notion of a restriction coproduct $\oplus$ is a coproduct in the usual categorical sense (Def \ref{def:restcoprod}). A \emph{distributive} restriction category (Def \ref{def:drc}) is a restriction category with both restriction products and coproducts, such that the restriction product distributes over the coproduct in the sense that the natural map $(A \times B) \oplus (A \times C) \to A \times (B \oplus C)$ is an isomorphism. 

As was discussed in \cite[Sec 4.3]{cockett2007restriction}, there is no reason why a restriction category should not possess a categorical product. In fact, it may be tempting to think that in a distributive restriction category, $A \oplus B \oplus (A \times B)$ will always be a categorical product. However, this is not so. Indeed, any category is a restriction category with the trivial restriction operator where for any map $f$, $\overline{f}$ is the simply identity on the domain of $f$. Thus when a distributive restriction category has a trivial restriction, in this sense, the restriction product $\times$ is already a categorical product and it is not of the desired form. This means that the question of when a distributive restriction category has ${A \oplus B \oplus (A \times B)}$ a categorical product, is nontrivial.  Surprisingly, as we shall see, demanding that ${A \oplus B \oplus (A \times B)}$ be a categorical product forces a distributive restriction category to be both \emph{classical} and \emph{classified}. 

Classical restriction categories were explored by Cockett and Manes in \cite{cockett2009boolean}. In any restriction category there is a canonical partial order $\leq$ on parallel maps (Def. \ref{def:leq}). A restriction category is classical if it has joins (Def. \ref{def:join}) and relative complements (Def. \ref{def:classrest}) with respect to $\leq$. These are both desirable notions to have in a restriction category as they allow for Boolean classical reasoning. The main result of this paper is that a distributive restriction category is classical if and only if $A \oplus B \oplus (A \times B)$ is a categorical product (Thm. \ref{thm:cdrc}). Consequently, when $A \oplus B \oplus (A \times B)$ is a categorical product we refer to it as a \emph{classical}  product, which we denote as $A \& B = A \oplus B \oplus (A \times B)$ to distinguish it from the restriction product. This allows us to restate our main result: a distributive restriction category is classical if and only if it has classical products. To prove that a classical distributive restriction category has classical products (Prop. \ref{prop:classtoclassprod}), we use joins and complements of restriction idempotents (Def. \ref{def:ecomp}) to construct the necessary unique map $C \to A \& B$ for the universal property of the product. Conversely, to show that having classical products implies being classical (Prop. \ref{prop:cptoc}), we use the universal property of $\&$ to build the joins and relative complements, and make use of the notion of decisions (Def. \ref{def:decision}) in the proof. Thus classical products give a novel and somewhat unexpected way of axiomatizing classical distributive restriction categories.

Important examples of distributive restriction categories are Kleisli categories for the exception monad\footnote{The exception monad has been popularized in Haskell as the ``maybe'' monad.}. Indeed, recall that a distributive category \cite{carboni1993distributive,cockett1993distributive} is a category $\mathbb{D}$ that has products $\times$ and coproducts $\oplus$ (in the usual categorical sense) where $\times$ distributes over $\oplus$. The exception monad of a distributive category $\mathbb{D}$ is the monad defined on $\_ \oplus \mathsf{1}$, where $\mathsf{1}$ is the terminal object. For any distributive category $\mathbb{D}$, the Kleisli category of its exception monad, $\mathbb{D}_{\_ \oplus \mathsf{1}}$, is a distributive restriction category (Prop \ref{prop:d+1class}), where the (co)product $\times$ (resp. $\oplus$) of $\mathbb{D}$ becomes the restriction (co)product in $\mathbb{D}_{\_ \oplus \mathsf{1}}$. Furthermore, it is already known that $A \oplus B \oplus (A \times B)$ is a categorical product in $\mathbb{D}_{\_ \oplus \mathsf{1}}$ \cite[Prop. 3.4]{cockett1997weakly}. Thus, $\mathbb{D}_{\_ \oplus \mathsf{1}}$ has classical products and is therefore also classical. Less obviously, the converse is also true. 

To prove this, we use the concept of classified restriction categories, which were introduced by Cockett and Lack in \cite{cockett2003restriction}. Briefly, a restriction category is classified if it has an abstract partial map classifier and every map factors through this classifier via a unique total map (Def. \ref{def:restclass}). It turns out every classified restriction category is equivalent to the Kleisli category of the canonical monad induced on its subcategory of total maps (Prop. \ref{prop:classkleisli}). From this, we obtain that a distributive restriction category is the Kleisli category of the exception monad of a distributive category if and only if it is {\em classically\/} classified (Def. \ref{def:classclass}) in the sense that every map $f: A \to B$ factors uniquely as a total map $\mathcal{T}(f): A \to B \oplus \mathsf{1}$ (Cor. \ref{cor:classclass}). Thus a distributive restriction category is classical if and only if it is classically classified if and only if is equivalent to the Kleisli category of an exception monad on an ordinary distributive category (Thm. \ref{thm2}). Therefore, a distributive restriction category has classical products precisely when it is the Kleisli category of the exception monad on an ordinary distributive category.

In this manner, for a distributive restriction category to have classical products embodies some surprisingly strong structural properties.

\paragraph{Outline:} Section \ref{sec:drc} reviews the basics of distributive restriction categories. Section \ref{sec:classprod} introduces the notion of classical products for distributive restriction categories. Section \ref{sec:classical} provides a review of classical restriction categories, as well as some new observations about the complements of restriction idempotents. In Section \ref{sec:split} we show how restriction (co)products and categorical products (no matter their form) are related via splitting restriction idempotent, and how in a classical restriction category, the categorical product must always be of the form $A \oplus B \oplus (A \times B)$. In Section \ref{sec:CDRC} we prove the main result of this paper: distributive restriction categories are classical if and only if they have classical products. Finally, in Section \ref{sec:classified}, we extend the main result and show that a distributive restriction category is classical if and only if it is equivalent to the Kleisli category of the exception monad on a distributive category. 

\paragraph{Conventions} In an arbitrary category $\mathbb{X}$, we denote objects by capital letters $A$, $B$, etc. and maps by lower case letters $f$, $g$, $h$, etc. Identity maps are written as $1_A: A \to A$, while composition is written in diagrammatic order, that is, the composite of maps $f: A \to B$ and $g: B \to C$ is denoted $fg: A \to C$, which first does $f$ then $g$. 

\section{Distributive Restriction Categories}\label{sec:drc}

In this background section, we review distributive restriction categories -- mostly to introduce notation and terminology. As such, we will quickly review restriction categories, restriction products, and restriction coproducts, as well as some basic identities and our two main running examples. For a more in-depth introduction to restriction categories, we invite the reader to see \cite{cockett2002restriction,cockett2003restriction,cockett2007restriction, cockett2009boolean}. 

\begin{definition}\label{def:restcat} A \textbf{restriction category} \cite[Sec 2.1.1]{cockett2002restriction} is a category $\mathbb{X}$ equipped with a \textbf{restriction operator} $\overline{(\phantom{f})}$, which associates every map $f: A \to B$ to a map ${\overline{f}: A \to A}$, called the \textbf{restriction of $f$}, and such that the following four axioms hold: 
\begin{enumerate}[{\bf [R.1]}]
\item $ \overline{f}f = f$
\item $\overline{f} \overline{g} = \overline{g} \overline{f}$
\item $\overline{\overline{g}f} = \overline{g} \overline{f}$
\item $f\overline{g} =  \overline{fg} f $
\end{enumerate}
\noindent Furthermore, in a restriction category $\mathbb{X}$: 
\begin{enumerate}[{\em (i)}]
\item A \textbf{total} map \cite[Sec 2.1.2]{cockett2002restriction} is a map $f: A \to B$ such that $\overline{f} = 1_A$. We denote $\mathcal{T}\left[\mathbb{X} \right]$ to be the subcategory of total maps of $\mathbb{X}$.
\item A \textbf{restriction idempotent} \cite[Sec 2.3.3]{cockett2002restriction} is a map $e: A \to A$ such that $\overline{e} = e$. 
\end{enumerate}
\end{definition}

The canonical example of a restriction category is the category of sets and partial functions, which we review in Ex \ref{ex:PAR} below. For a list of many other examples of restriction categories, see \cite[Sec 2.1.3]{cockett2002restriction}. Here are some basic identities that will be useful for the proofs in this paper: 

\begin{lemma}\cite[Lemma 2.1 \& 2.2]{cockett2002restriction} \label{lem:rest} In a restriction category $\mathbb{X}$: 
\begin{enumerate}[{\em (i)}]
\item \label{lem:rest.i} $\overline{fg}= \overline{f\overline{g}}$
\item \label{lem:rest.total.2} If $g$ is total, then $\overline{fg} = \overline{f}$ 
\item \label{lem:rest.total} If $f$ is monic, then $f$ is total; 
\item \label{lem:rest.idem.R1}  If $e$ is a restriction idempotent, then it is an idempotent, that is, $e e = e$;
\item \label{lem:rest.idem.R2} If $e$ and $e^\prime$ are restriction idempotents of the same type, then their composite is a restriction idempotent and $e e^\prime = e^\prime e$; 
\item \label{lem:rest.idem.R3} If $e$ is a restriction idempotent, then $\overline{ef} = e \overline{f} = \overline{f} e$; 
\item  \label{lem:rest.idem.R4} If $e$ is a restriction idempotent, then $fe = \overline{fe} f$;
\item \label{lem:rest.idem} For any map $f$, $\overline{f}$ is a restriction idempotent, so $\overline{\overline{f}} = \overline{f}$ and $\overline{f}~\overline{f} = \overline{f}$ 
 \end{enumerate}   
\end{lemma}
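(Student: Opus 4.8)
The plan is to derive every clause directly from the four axioms [R.1]--[R.4], in a carefully chosen order, since several of the later items silently depend on earlier ones. Two structural facts should be bootstrapped first: that identities are total and that $\overline{f}$ is genuinely a restriction idempotent (item (viii)), because clauses (v), (vi), (vii) all invoke ``$\overline{f}$ (or $e$) is a restriction idempotent'' and freely commute such maps. As a warm-up I would record that identities are total: applying [R.1] to $1_A$ gives $\overline{1_A}\,1_A = 1_A$, hence $\overline{1_A} = 1_A$. This innocuous fact is what makes the identity-substitution trick below legal.

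For item (viii) I would prove the two assertions separately and in the right order to avoid circularity. First, $\overline{f}\,\overline{f} = \overline{f}$: take the restriction of the [R.1] equation $\overline{f}f = f$ and rewrite its left-hand side by [R.3], giving $\overline{f}\,\overline{f} = \overline{\overline{f}f} = \overline{f}$. Next, $\overline{\overline{f}} = \overline{f}$: since $\overline{f}\cdot 1_A = \overline{f}$ as maps, reading the inner map of [R.3] as the identity yields $\overline{\overline{f}} = \overline{\overline{f}\,1_A} = \overline{f}\,\overline{1_A} = \overline{f}$. I expect this idempotency/fixed-point pair to be the main obstacle: there is a real temptation to deduce each of the two equalities from the other and thereby loop, so the substitutions (restricting [R.1] for the first, plugging $1_A$ into [R.3] for the second) must be pinned down precisely so that each rests only on the raw axioms.

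Item (i) is the next most delicate. I would take the restriction of [R.4], $f\overline{g} = \overline{fg}f$, to obtain $\overline{f\overline{g}} = \overline{\overline{fg}f}$, and simplify the right-hand side by [R.3] to $\overline{fg}\,\overline{f}$. It then remains to absorb the trailing factor, i.e. to show $\overline{fg}\,\overline{f} = \overline{fg}$. For this I would first establish the auxiliary identity $\overline{f}\,\overline{fg} = \overline{fg}$, which follows by restricting $\overline{f}\,fg = fg$ (an instance of [R.1]) and applying [R.3]; then [R.2] commutes the two restriction idempotents to give $\overline{fg}\,\overline{f} = \overline{f}\,\overline{fg} = \overline{fg}$, completing (i).

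The remaining clauses are then short. Item (ii) is immediate from (i): when $\overline{g} = 1$ we get $\overline{fg} = \overline{f\overline{g}} = \overline{f}$. For (iii), $\overline{f}f = f = 1_A f$ and monic cancellation give $\overline{f} = 1_A$. For (iv), $ee = \overline{e}e = e$ by [R.1]. For (v), [R.2] gives commutativity $ee' = \overline{e}\,\overline{e'} = \overline{e'}\,\overline{e} = e'e$, while [R.3] gives $\overline{ee'} = \overline{\overline{e}\,e'} = \overline{e}\,\overline{e'} = ee'$, so the composite is again a restriction idempotent. For (vi), [R.3] yields $\overline{ef} = \overline{\overline{e}f} = \overline{e}\,\overline{f} = e\overline{f}$, and [R.2] gives $e\overline{f} = \overline{f}e$. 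Finally (vii) is just [R.4] read with $\overline{e} = e$, namely $fe = f\overline{e} = \overline{fe}f$. I would present these last six in exactly this dependency order so that each step cites only axioms or already-proven clauses.
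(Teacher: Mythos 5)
Your proof is correct in every clause. Note that the paper itself gives no proof of this lemma: it is quoted as background from Cockett--Lack (Lemma 2.1 \& 2.2 of the cited reference), so there is no in-paper argument to compare against; your derivations are essentially the standard ones from that source. In particular, you handle the two genuinely delicate points properly: you break the potential circularity in (viii) by proving $\overline{f}\,\overline{f}=\overline{f}$ from restricting \textbf{[R.1]} via \textbf{[R.3]}, and $\overline{\overline{f}}=\overline{f}$ by instantiating \textbf{[R.3]} at the identity (which requires your warm-up $\overline{1_A}=1_A$, itself immediate from \textbf{[R.1]}); and in (i) you correctly reduce $\overline{f\overline{g}}=\overline{\overline{fg}f}=\overline{fg}\,\overline{f}$ to the absorption identity $\overline{f}\,\overline{fg}=\overline{fg}$, which you obtain by restricting $\overline{f}fg=fg$ and applying \textbf{[R.3]}, then commuting with \textbf{[R.2]}. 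The dependency order you impose (identities total, then (viii), then (i), then the rest) is exactly what is needed for each step to cite only axioms or previously established clauses.
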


We now turn our attention to restriction products. As mentioned in the introduction, restriction products are not products in the usual sense, since they satisfy a lax universal property. A 2-categorical explanation of why restriction products are the appropriate analogue of products for restriction categories can be found in \cite[Sec 4.1]{cockett2007restriction} (briefly it is what arises when one considers Cartesian objects in the appropriate 2-category of restriction categories).

\begin{definition}\label{def:restprod} A \textbf{Cartesian restriction category} is a restriction category $\mathbb{X}$ with: 
\begin{enumerate}[{\em (i)}]
\item A \textbf{restriction terminal object} \cite[Sec 4.1]{cockett2007restriction}, that is, an object $\mathsf{1}$ such that for every object $A$ there exists a unique total map $t_A: A \to \mathsf{1}$ such that for every map $f: A \to B$, the following equality holds: 
\begin{align}\label{diag:terminal}
    f t_B = \overline{f} t_A
\end{align}
\item Binary \textbf{restriction products} \cite[Sec 4.1]{cockett2007restriction}, that is, every pair of objects $A$ and $B$, there is an object $A \times B$ with total maps $\pi_0: A \times B \to A$ and $\pi_1: A \times B \to B$ such that for every pair of maps $f: C \to A$ and ${g: C \to B}$, there exists a unique map $\langle f, g \rangle: C \to A \times B$ such that the following equalities hold: 
\begin{align}\label{diag:pair1}
    \langle f, g \rangle \pi_0 = \overline{g}f && \langle f, g \rangle \pi_1 = \overline{f}g
\end{align}
\end{enumerate}
\end{definition}

On the other hand, restriction coproducts are actual coproducts in the usual sense. The justification for this is that actual coproducts are required if one wishes that a restriction category admits a ``calculus of matrices'' \cite[Sec 2.3]{cockett2007restriction}, as well as providing the appropriate restriction analogue of an extensive category \cite[Def 3]{cockett2007restriction} (which we discuss in Def \ref{def:decision} below). A 2-categorical explanation for why restriction coproducts are simply coproducts can be found in \cite[Sec 2.1]{cockett2007restriction} (briefly it is what arises when one considers coCartesian objects in the appropriate 2-category of restriction categories).

In this paper, while we will only need to work with binary restriction products, for simplicity, it will be easier to work with finite (restriction) coproducts. So for a category $\mathbb{X}$ with finite coproducts, we denote the coproduct as $\oplus$, with injection maps ${\iota_j: A_j \to A_0 \oplus \hdots \oplus A_n}$, where the copairing operation is denoted by $[-,\hdots,-]$, and we denote the initial object as $\mathsf{0}$ with unique map $z_A: \mathsf{0} \to A$. 

\begin{definition}\label{def:restcoprod} A \textbf{coCartesian restriction category} is a restriction category $\mathbb{X}$ with finite \textbf{restriction coproducts} \cite[Sec 2.1]{cockett2007restriction}, that is, $\mathbb{X}$ has finite coproducts where all the injection maps ${\iota_j: A_j \to A_0 \oplus \hdots \oplus A_n}$ are total. 
\end{definition}

Lastly, we may ask that restriction products distribute over restriction coproducts:  

\begin{definition} \label{def:drc} A \textbf{distributive restriction category} \cite[Sec 5.3]{cockett2007restriction} is a restriction category $\mathbb{X}$ which is both a Cartesian restriction category and a coCartesian restriction category such that the canonical maps:
\begin{align}
    \left \langle 1_A \times \iota_0,  1_A \times \iota_1 \right \rangle: (A \times B) \oplus (A \times C) \to A \times (B \oplus C) && \langle z_A, 1_\mathsf{0} \rangle: \mathsf{0} \to A \times \mathsf{0}
\end{align}
are isomorphisms. 
\end{definition}

Here are our main examples of distributive restriction categories, which we will use as running examples throughout this paper: 

\begin{example} \normalfont\label{ex:PAR} Let $\mathsf{PAR}$ be the category whose objects are sets and whose maps are partial functions between sets. Then $\mathsf{PAR}$ is a distributive restriction category where: 
\begin{enumerate}[{\em (i)}]
\item The restriction of a partial function $f: X \to Y$ is the partial function $\overline{f}: X \to X$ defined as follows: 
\[ \overline{f}(x) = \begin{cases} x & \text{if } f(x) \downarrow \\
\uparrow & \text{if } f(x) \uparrow \end{cases} \]
where $\downarrow$ means defined and $\uparrow$ means undefined. 
\item The restriction terminal object is a chosen singleton $\mathsf{1} = \lbrace \ast \rbrace$, and $t_X: X \to \lbrace \ast \rbrace$ maps everything to the single element, $t_X(x) = \ast$.
\item The restriction product is given by the Cartesian product $A \times B$, where the projections $\pi_0: X \times Y \to X$ and $\pi_1: X \times Y \to Y$ are defined as $\pi_0(x,y) =x$ and $\pi_1(x,y)=y$, and the pairing of partial functions is defined as: 
\[ \langle f,g \rangle (x) = \begin{cases} (f(x), g(x)) & \text{if } f(x) \downarrow \text{ and } g(x) \downarrow \\
\uparrow & \text{otherwise}  \end{cases} \]
\item The initial object is the empty set $\mathsf{0} = \emptyset$ and the coproduct is given by disjoint union $X \oplus Y = X \sqcup Y$.
\end{enumerate}
The total maps are precisely functions that are everywhere defined, so $\mathcal{T}\left[ \mathsf{PAR} \right] = \mathsf{SET}$, the category of sets and functions. Restriction idempotents of type $X \to X$ correspond precisely to the subsets of $X$. Explicitly, given a subset $U \subseteq X$, its associated restriction idempotent is the partial function $e_U: X \to X$ defined as follows: 
\[ e_U = \begin{cases} x & \text{if } x \in U\\
\uparrow & \text{if } x \notin U \end{cases} \]
In particular, for a partial function $f: X \to Y$, we have that its restriction $\overline{f}$ is the restriction idempotent associated to the domain of $f$, $\mathsf{dom}(f) = \lbrace x \in X \vert~ f(x) \downarrow \rbrace \subseteq X$, that is, $\overline{f} = e_{\mathsf{dom}(f)}$. 
\end{example}

\begin{example} \normalfont\label{ex:CALG}  Let $k$ be a commutative ring and let $k\text{-}\mathsf{CALG}_\bullet$ be the category whose objects are commutative $k$-algebras and whose maps are \emph{non-unital} $k$-algebra morphisms, that is, $k$-linear morphisms $f: A \to B$ that preserve the multiplication, $f(ab) = f(a) f(b)$, but not necessarily the multiplicative unit, so $f(1)$ may not equal $1$. Then $k\text{-}\mathsf{CALG}^{op}_\bullet$ is a distributive restriction category, so $k\text{-}\mathsf{CALG}_\bullet$ is a codistributive corestriction category where:
\begin{enumerate}[{\em (i)}]
\item The corestriction of a non-unital $k$-algebra morphism $f: A \to B$ is the non-unital $k$-algebra morphism ${\overline{f}: B \to B}$ defined as:
\[\overline{f}(b)= f(1)b\]  
\item The corestriction initial object is $k$, and $t_A: k \to A$ is defined by the $k$-algebra structure of $A$. 
\item The corestriction coproduct is given by the tensor product $A \otimes B$, where the injections $\iota_0: A \to A \otimes B$ and ${\iota_1: B \to A \otimes B}$ are defined as $\iota_0(a) = a \otimes 1$ and $\iota_1(b) = 1 \otimes b$, and where the copairing is given by:
\[[f,g](a \otimes b) = f(a)g(b)\]  
\item The terminal object is the zero algebra $\mathsf{0}$ and the product is given by the product of $k$-algebras $A \times B$. 
\end{enumerate}
The total maps correspond precisely to the maps that do preserve the multiplicative unit, $f(1)=1$. In other words, the total maps are the actual $k$-algebra morphisms. So $\mathcal{T}[k\text{-}\mathsf{CALG}^{op}_\bullet] = k\text{-}\mathsf{CALG}^{op}$, where $k\text{-}\mathsf{CALG}$ is the category of commutative $k$-algebras and $k$-algebra morphisms. Restriction idempotents in $k\text{-}\mathsf{CALG}^{op}$, so corestriction idempotents in $k\text{-}\mathsf{CALG}_\bullet$, of type $A \to A$ correspond precisely to idempotent elements of $A$, that is, elements $u \in A$ such that $u^2=1$. Explicitly, for an idempotent element $u \in A$, its associated corestriction idempotent is the non-unital $k$-algebra morphism $e_u: A \to A$ defined as:
\[ e_u(a)=ua \]
So for a non-unital $k$-algebra morphism $f: A \to B$, the corresponding idempotent element of its corestriction $\overline{f}$ is $f(1) \in B$, and so $\overline{f}= e_{f(1)}$. 
\end{example}

\section{Classical Products}\label{sec:classprod}

In this section, we introduce classical products for distributive restriction categories, which is the main concept of interest in this paper. Classical products are actual products in the usual sense. The nomenclature is justified since in Sec \ref{sec:CDRC} we will show that a distributive restriction category has classical products if and only if it is a classical restriction category (which we review in Sec \ref{sec:classical}).  

To distinguish between products and restriction products, we will use linear logic inspired notation for the former. So for a category $\mathbb{X}$ with finite products, we denote the binary product by $\&$, the projection maps as $p_0: A \& B \to A$ and $p_1: A \& B \to B$, and the pairing operation by $\llangle -,- \rrangle$. Concretely, if only to avoid confusion with the restriction product, for every pair of maps $f: C \to A$ and $g: C \to B$, $\llangle f,g \rrangle: C \to A\&B$ is the unique map such that the following equalities hold:
\begin{align}\label{diag:pair2}
    \llangle f,g \rrangle p_0 =f && \llangle f,g \rrangle p_1 = g 
\end{align}
We stress that, unlike the projections of restriction products, the projections $p_i$ are not assumed to be total. We denote the terminal object by $\mathsf{0}$ and unique maps to it by $!_A: A \to \mathsf{0}$. Keen-eyed readers will note that we are using the same notation for both the terminal object and the restriction initial object. This is justified in Lemma \ref{lem:restcoprodzero}.(\ref{lem:restcoprodzero.i}) below. 

Here are some basic identities regarding compatibility between the restriction and the product structure: 

\begin{lemma} \label{lem:rest-product} In a restriction category $\mathbb{X}$ with finite products:
\begin{enumerate}[{\em (i)}]
\item $f \overline{!_B} = \overline{!_A} f$; 
\item \label{lem:rest-product.ii} $\left\llangle f,g \right\rrangle  \overline{p_0} = \left\llangle f, \overline{f}g \right\rrangle$ and $\left\llangle f,g \right\rrangle  \overline{p_1} = \left\llangle \overline{g}f, g \right\rrangle$;
\item \label{lem:rest-product.iii} $\left\llangle f,g \right\rrangle \overline{p_0}~\overline{p_1} = \left\llangle \overline{g}f, \overline{f}g \right\rrangle$
\end{enumerate}
\end{lemma}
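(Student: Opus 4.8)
The plan is to handle all three parts by the same two-move strategy: first use the restriction axiom [R.4], $f\overline{g} = \overline{fg}\,f$, to slide a restriction idempotent across a map, and then, since we are comparing maps \emph{into} a categorical product, invoke the universal property (\ref{diag:pair2}) — two maps into $A \mathbin{\&} B$ are equal as soon as they agree after postcomposition with $p_0$ and with $p_1$. Note that no totality of the projections is needed; only [R.4], the defining equations (\ref{diag:pair2}), and the basic identities of Lemma \ref{lem:rest}.

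For part (i), I would apply [R.4] with $g := {!_B}$ to get $f\overline{!_B} = \overline{f{!_B}}\,f$, and then observe that $f{!_B} = {!_A}$ by uniqueness of maps into the terminal object $\mathsf{0}$, whence $\overline{f{!_B}} = \overline{!_A}$. This immediately gives $f\overline{!_B} = \overline{!_A}\,f$.

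For part (ii), the key step is to rewrite $\llangle f,g \rrangle\,\overline{p_0}$ using [R.4] together with the first equation of (\ref{diag:pair2}): $\llangle f,g \rrangle\,\overline{p_0} = \overline{\llangle f,g \rrangle\, p_0}\,\llangle f,g \rrangle = \overline{f}\,\llangle f,g \rrangle$. It then remains to verify $\overline{f}\,\llangle f,g \rrangle = \llangle f, \overline{f}g \rrangle$ by postcomposing with the projections: with $p_0$ one gets $\overline{f}f = f$ by [R.1], and with $p_1$ one gets $\overline{f}g$, matching the two components of $\llangle f, \overline{f}g \rrangle$. The second identity $\llangle f,g \rrangle\,\overline{p_1} = \llangle \overline{g}f, g \rrangle$ is entirely symmetric, using $\overline{g}g = g$ in the $p_1$-component.

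For part (iii), I would simply iterate the computation of part (ii): applying [R.4] twice yields $\llangle f,g \rrangle\,\overline{p_0}\,\overline{p_1} = \overline{f}\,\overline{g}\,\llangle f,g \rrangle$, and then postcomposing with the projections and using commutativity of restriction idempotents [R.2] together with [R.1] identifies the result with $\llangle \overline{g}f, \overline{f}g \rrangle$ (in the $p_0$-component $\overline{f}\,\overline{g}f = \overline{g}\,\overline{f}f = \overline{g}f$, and in the $p_1$-component $\overline{f}\,\overline{g}g = \overline{f}g$); alternatively one can chain the two equations of part (ii) directly. None of these steps presents a genuine obstacle — the computations are routine restriction-category manipulations. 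The only points requiring care are that everything is written in diagrammatic order, so that $\overline{p_0}\,\overline{p_1}$ is applied after $\llangle f,g \rrangle$, and the recognition that [R.4] is precisely what allows one to move a restriction idempotent across the pairing in the first place.
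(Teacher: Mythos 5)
Your proposal is correct and follows essentially the same route as the paper: part (i) is the identical [R.4]-plus-terminality argument, and parts (ii) and (iii) use the same key moves ([R.4] with the defining equations of the pairing), differing only in that you verify the resulting pairing identities by postcomposing with the projections and invoking uniqueness, where the paper cites the equivalent pre-composition compatibility $h\llangle f,g\rrangle = \llangle hf, hg\rrangle$, and in (iii) you collect $\overline{f}\,\overline{g}\,\llangle f,g\rrangle$ before projecting rather than chaining (ii) twice — a variant you yourself note. All steps are valid, and your observation that no totality of the $p_i$ is needed matches the paper's setting.
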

\begin{proof} Recall that the pairing $\llangle -, - \rrangle$ is compatible with composition in the following sense: 
\begin{align}\label{eq:pair-comp}
   h \llangle f,g \rrangle (p \& q) = \left \llangle hfk, hgk \right \rrangle 
\end{align}
\begin{enumerate}[{\em (i)}]
\item Here we use \textbf{[R.4]}: 
\begin{align*}
    f \overline{!_B} &=~ \overline{f !_B} f \tag*{\textbf{[R.4]}} \\
    &=~  \overline{!_A} f \tag*{Uniqueness of $!$}
\end{align*}
\item Here  we use \textbf{[R.4]} and \textbf{[R.1]}: 
\begin{align*}
  \left\llangle f,g \right\rrangle  \overline{p_0} &=~ \overline{\left\llangle f,g \right\rrangle p_0} \left\llangle f,g \right\rrangle \tag*{\textbf{[R.4]}} \\
  &=~ \overline{f} \left\llangle f,g \right\rrangle \tag*{(\ref{diag:pair2})} \\ 
&=~ \left\llangle \overline{f}f, \overline{f}g \right\rrangle \tag*{(\ref{eq:pair-comp})} \\ 
&=~ \left\llangle f, \overline{f}g \right\rrangle \tag*{\textbf{[R.4]}}
\end{align*}
So $ \left\llangle f,g \right\rrangle  \overline{p_0} = \left\llangle f, \overline{f}g \right\rrangle$, and similarly we can compute that $\left\llangle f,g \right\rrangle  \overline{p_1} = \left\llangle \overline{g}f,g \right\rrangle$.
\item Here we apply the above two identities, and also \textbf{[R.2]} and \textbf{[R.3]}:
\begin{align*}
    \left\llangle f,g \right\rrangle \overline{p_0}~\overline{p_1} &=~ \left\llangle f, \overline{f}g \right\rrangle \overline{p_1} \tag*{Lemma \ref{lem:rest-product}.(\ref{lem:rest-product.ii})} \\
    &=~  \left\llangle \overline{\overline{f}g} f, \overline{f}g \right\rrangle \tag*{Lemma \ref{lem:rest-product}.(\ref{lem:rest-product.ii})} \\
    &=~  \left\llangle \overline{f} \overline{g} f, \overline{f}g \right\rrangle \tag*{\textbf{[R.3]}} \\
    &=~ \left\llangle \overline{g} \overline{f} f, \overline{f}g \right\rrangle \tag*{\textbf{[R.2]}} \\
    &=~  \left\llangle \overline{g} f, \overline{f}g \right\rrangle \tag*{\textbf{[R.1]}} 
\end{align*}
\end{enumerate}
So the desired identities hold. 
\end{proof}

We now turn our attention to properly defining classical products. So in a distributive restriction category, we wish to know when $A \oplus B \oplus (A \times B)$ is a product of $A$ and $B$. In order to define the projections, we will need restriction zero maps. Intuitively, a restriction zero map is a map which is nowhere defined. For a category $\mathbb{X}$ with zero maps, we denote the zero maps by $0: A \to B$, and recall that zero maps satisfy the annihilation property that $f0=0=0f$ for all maps $f$. 

\begin{definition} A restriction category $\mathbb{X}$ is said to have \textbf{restriction zeroes} \cite[Sec 2.2]{cockett2007restriction} if $\mathbb{X}$ has zero maps such that $\overline{0} =0$.
\end{definition}

\begin{example} \normalfont\label{ex:PAR0} $\mathsf{PAR}$ has restriction zeroes, where the restriction zero maps are the partial functions $0: X \to Y$ which is nowhere defined, $0(x) \uparrow$ for all $x \in X$. 
\end{example}

\begin{example} \normalfont\label{ex:CALG0} $k\text{-}\mathsf{CALG}^{op}_\bullet$ has restriction zeroes, so $k\text{-}\mathsf{CALG}_\bullet$ has corestriction zeroes, where $0: A \to B$ is the zero morphism, $0(a) = 0$. 
\end{example}

In a coCartesian restriction category with restriction zeroes, the initial object becomes a zero object and we can define ``quasi-projections'' for the restriction coproduct (which make the injections into restriction/partial isomorphisms \cite[Sec 2.3.1]{cockett2002restriction}).

\begin{definition} In a coCartesian restriction $\mathbb{X}$ with restriction zeroes, define the maps $\iota^\circ_j: A_0 \oplus \hdots \oplus A_n \to A_j$ as the copairing $\iota^\circ_j := [0, \hdots, 0, 1_{A_j}, 0, \hdots, 0]$.
\end{definition}

\begin{lemma}\label{lem:restcoprodzero} \cite[Lemma 2.10]{cockett2007restriction} In a coCartesian restriction $\mathbb{X}$ with restriction zeroes,
\begin{enumerate}[{\em (i)}]
\item \label{lem:restcoprodzero.i} The restriction initial object $\mathsf{0}$ is a zero object, so in particular a terminal object;
\item\label{lem:restcoprodzero.ii} $\overline{\iota^\circ_j} = 0 \oplus \hdots \oplus 0 \oplus 1_{A_j} \oplus 0 \oplus \hdots \oplus 0$;
\item \label{lem:restcoprodzero.iii} $\iota_j \iota^\circ_j =  1_{A_j}$ and $\iota_i \iota^\circ_j =0$ for $i \neq j$;
\item \label{lem:restcoprodzero.iv} $\iota^\circ_j \iota_j = \overline{\iota^\circ_j}$;
\item \label{lem:restcoprodzero.v} $\overline{\iota^\circ_j}\overline{\iota^\circ_j}= \overline{\iota^\circ_j}$ and $\overline{\iota^\circ_i}\overline{\iota^\circ_j}= 0$ for $i \neq j$;
\item $\iota^\circ\iota_0 \oplus \hdots \oplus \iota^\circ_n \iota_n = 1_{A_0 \oplus \hdots \oplus A_n}$. 
\end{enumerate}
\end{lemma}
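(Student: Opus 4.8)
The plan is to isolate a single workhorse computation, part (iii), and then derive the remaining five parts from it together with the couniversal property of the coproduct (a map out of $A_0 \oplus \cdots \oplus A_n$ is determined by its composites with the injections $\iota_i$, so the $\iota_i$ are jointly epic). Part (iii) itself is immediate: since $\iota^\circ_j = [0,\ldots,0,1_{A_j},0,\ldots,0]$, the copairing identity $\iota_i[f_0,\ldots,f_n] = f_i$ gives $\iota_i \iota^\circ_j = 1_{A_j}$ when $i = j$ and $\iota_i \iota^\circ_j = 0$ when $i \neq j$. So I would establish (iii) first, even though it is listed third, as it drives everything else.

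Part (i) is handled separately using the restriction zeroes. Since $\mathsf{0}$ is initial, the only endomorphism of $\mathsf{0}$ is $1_{\mathsf{0}}$, so the zero map $0 \colon \mathsf{0} \to \mathsf{0}$ must coincide with $1_{\mathsf{0}}$. Then for any $f \colon A \to \mathsf{0}$ the annihilation property yields $f = f 1_{\mathsf{0}} = f 0 = 0$, so the zero map is the unique map $A \to \mathsf{0}$ and $\mathsf{0}$ is terminal; being both initial and terminal, it is a zero object. For part (ii) I would compute $\overline{\iota^\circ_j}$ by precomposing with each injection: \textbf{[R.4]} gives $\iota_i \overline{\iota^\circ_j} = \overline{\iota_i \iota^\circ_j}\,\iota_i$, and part (iii) together with $\overline{1_{A_j}} = 1_{A_j}$ and $\overline{0} = 0$ (restriction zeroes) collapses the right-hand side to $\iota_j$ when $i = j$ and to $0$ otherwise. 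This matches $\iota_i$ composed with $0 \oplus \cdots \oplus 1_{A_j} \oplus \cdots \oplus 0$, so the two maps agree by couniversality.

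Part (iv) is the same style of argument: precomposing $\iota^\circ_j \iota_j$ with $\iota_i$ and using part (iii) gives $\iota_j$ for $i = j$ and $0$ for $i \neq j$, which is exactly $\iota_i \overline{\iota^\circ_j}$ by part (ii), whence $\iota^\circ_j \iota_j = \overline{\iota^\circ_j}$. Part (v) then splits into two: the idempotence $\overline{\iota^\circ_j}\,\overline{\iota^\circ_j} = \overline{\iota^\circ_j}$ is simply that $\overline{\iota^\circ_j}$ is a restriction idempotent (Lemma \ref{lem:rest}.(\ref{lem:rest.idem})), while the orthogonality follows by writing $\overline{\iota^\circ_i}\,\overline{\iota^\circ_j} = \iota^\circ_i (\iota_i \iota^\circ_j) \iota_j$ via part (iv) and killing the middle factor with part (iii), since $\iota_i \iota^\circ_j = 0$ for $i \neq j$.

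Finally, part (vi) asserts that the pairwise-disjoint idempotents $\overline{\iota^\circ_j} = \iota^\circ_j\iota_j$ combine (via the matrix/disjoint-join calculus afforded by the restriction zeroes) to the identity on $A_0 \oplus \cdots \oplus A_n$. I would verify it by precomposing the combined map with each $\iota_i$, where distributivity of composition over the sum together with part (iii) collapses it to $\iota_i$, and then conclude by couniversality. The only genuinely delicate point is this last part: one must be careful about what the displayed expression means and why composition distributes over it, which is exactly where the matrix calculus of coCartesian restriction categories with zeroes does the real work. Everything else is routine bookkeeping anchored on part (iii) and the joint epicity of the injections.
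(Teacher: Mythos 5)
There is no internal proof to compare against: the paper imports this lemma wholesale by citation as \cite[Lemma 2.10]{cockett2007restriction}, so your argument has to stand on its own. For parts (\ref{lem:restcoprodzero.i})--(\ref{lem:restcoprodzero.v}) it does. Proving (\ref{lem:restcoprodzero.iii}) first from the couniversal property, getting (\ref{lem:restcoprodzero.i}) from $0 = 1_{\mathsf{0}}$ plus annihilation, deriving (\ref{lem:restcoprodzero.ii}) and (\ref{lem:restcoprodzero.iv}) by precomposing with each injection (using \textbf{[R.4]}, $\overline{0}=0$, totality of identities, and joint epicity of the $\iota_i$), and then getting (\ref{lem:restcoprodzero.v}) from Lemma \ref{lem:rest}.(\ref{lem:rest.idem}) together with $\overline{\iota^\circ_i}\,\overline{\iota^\circ_j} = \iota^\circ_i(\iota_i\iota^\circ_j)\iota_j = 0$ is a complete and correct chain of reasoning, and it is the natural one for this statement.

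Part (vi) is the only place that needs comment, and you were right to single it out. As printed the statement is garbled: under the paper's own convention $f_0 \oplus \hdots \oplus f_n = [f_0\iota_0, \hdots, f_n\iota_n]$, the expression $\iota^\circ_0\iota_0 \oplus \hdots \oplus \iota^\circ_n\iota_n$ does not even have type $A_0 \oplus \hdots \oplus A_n \to A_0 \oplus \hdots \oplus A_n$, since each $\iota^\circ_j\iota_j$ already lives on the whole coproduct. Under the transposed reading $\iota_0\iota^\circ_0 \oplus \hdots \oplus \iota_n\iota^\circ_n$, your argument is exactly right and essentially trivial: each factor is $1_{A_j}$ by (\ref{lem:restcoprodzero.iii}) and functoriality of $\oplus$ finishes it. Under the join reading --- which is the form the paper actually uses and reproves later, as Lemma \ref{lem:class-coprod}.(\ref{lem:class-coprod.iv}) --- your appeal to ``distributivity of composition over the sum'' is a genuine (if small) gap: joins and \textbf{[J.3]} are not among the hypotheses here, and one must also say why this join exists at all before precomposing with anything. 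The patch stays within your own toolkit: show directly that $1$ is the least upper bound of the restriction idempotents $\overline{\iota^\circ_j} = \iota^\circ_j\iota_j$. Each $\overline{\iota^\circ_j} \leq 1$ holds automatically; and if $\overline{\iota^\circ_j} \leq h$ for all $j$, i.e.\ $\overline{\iota^\circ_j}h = \overline{\iota^\circ_j}$, then precomposing with $\iota_j$ and using $\iota_j\overline{\iota^\circ_j} = \overline{\iota_j\iota^\circ_j}\,\iota_j = \iota_j$ (\textbf{[R.4]} and (\ref{lem:restcoprodzero.iii})) gives $\iota_j h = \iota_j$ for every $j$, whence $h = 1$ by couniversality. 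So the precompose-with-injections idea you use everywhere else still does the work; it just has to replace, rather than invoke, the distributivity step.
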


We now use restriction zero maps to define the projections for our desired classical product. 

\begin{definition}\label{def:classprod} In a distributive restriction category $\mathbb{X}$,
\begin{enumerate}[{\em (i)}]
\item For every pair of objects $A$ and $B$, define the object $A \& B$ as $A \& B :=A \oplus  B \oplus  (A \times B)$;
\item If $\mathbb{X}$ has restriction zeroes, then for every pair of objects $A$ and $B$, define the maps ${p_0: A \& B \to A}$ and ${p_1: A \& B \to B}$ respectively as follows: 
 \begin{equation}\begin{gathered} \label{def:p_i}
  \xymatrixcolsep{7pc}\xymatrix{ A & \ar[l]_-{p_0 := [1_A, 0 , \pi_0]} A \oplus  B \oplus  (A \times B) \ar[r]^-{p_1 := [0,1_B, \pi_1]} & B }  \end{gathered}\end{equation}  
\end{enumerate}
A distributive restriction category $\mathbb{X}$ is said to have \textbf{classical products} if $\mathbb{X}$ has restriction zeroes and for every pair of objects $A$ and $B$, (\ref{def:p_i}) is a product diagram, that is, $A \& B$ is the product of $A$ and $B$ with projections $p_0: A \& B \to A$ and $p_1: A \& B \to B$. 
\end{definition}

To distinguish between classical products and restriction products, the maps $p_i$ will now be referred to as the \textbf{classical projections}, and $\llangle -, - \rrangle$ as the \textbf{classical pairing}. It follows from Lemma \ref{lem:restcoprodzero}.(\ref{lem:restcoprodzero.i}) that having classical products implies having finite products: 

\begin{corollary} A distributive restriction category with classical products has finite products. 
\end{corollary}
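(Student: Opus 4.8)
The plan is to reduce the statement to the standard fact that a category has finite products precisely when it has binary products together with a terminal object (the empty product), and then supply each of these two ingredients. The binary products are immediate: by the very definition of having classical products (Def.~\ref{def:classprod}), for every pair of objects $A$ and $B$ the diagram (\ref{def:p_i}) exhibits $A \& B$ as a product of $A$ and $B$ with projections $p_0$ and $p_1$. So the only remaining task is to produce a terminal object.

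For the terminal object, the key observation is that having classical products already forces the ambient category to have restriction zeroes (again by Def.~\ref{def:classprod}). Since a distributive restriction category is in particular coCartesian, it has finite restriction coproducts and hence a restriction initial object $\mathsf{0}$. We are therefore in the exact situation of Lemma~\ref{lem:restcoprodzero}: in a coCartesian restriction category with restriction zeroes, part (\ref{lem:restcoprodzero.i}) tells us that $\mathsf{0}$ is a zero object, and in particular a terminal object. This hands us the empty product for free.

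Combining these two pieces, I would then invoke the usual inductive construction of finite products from binary products and a terminal object: the $n$-ary product is built by iterating $\&$, with $\mathsf{0}$ serving as the nullary case, and the universal property follows from that of the binary products and the terminal object in the standard way. I do not expect a genuine obstacle here, since the whole argument is a direct appeal to Lemma~\ref{lem:restcoprodzero}.(\ref{lem:restcoprodzero.i}) plus a routine bookkeeping of products; if anything, the only point worth stating carefully is that the notational overloading of $\mathsf{0}$ for both the restriction initial object and the terminal object is exactly what Lemma~\ref{lem:restcoprodzero}.(\ref{lem:restcoprodzero.i}) licenses, which is why the corollary can be stated so crisply.
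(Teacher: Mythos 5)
Your proof is correct and matches the paper's own argument exactly: the paper also obtains the binary products directly from Definition~\ref{def:classprod} and the terminal object from Lemma~\ref{lem:restcoprodzero}.(\ref{lem:restcoprodzero.i}), which is precisely the citation given just before the corollary. Your extra remarks about the induction step and the overloading of $\mathsf{0}$ are just an unpacking of what the paper leaves implicit.
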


Here are our main examples of classical products: 

\begin{example} \normalfont  $\mathsf{PAR}$ has classical products where $X \& Y = X \sqcup Y \sqcup (X \times Y)$ and the classical projections are defined as: 
\begin{gather*}
   p_0(x) = x \qquad p_1(x) \uparrow \qquad \forall x\in X \\
   p_0(y) \uparrow \qquad p_1(y)=y \qquad \forall y \in Y \\
   p_0(x,y) = x \qquad p_1(x,y) = y \qquad \forall (x,y) \in X \times Y 
\end{gather*}
The classical pairing $\llangle - , - \rrangle$ of partial functions is defined as follows  
\[ \llangle f,g \rrangle (x) = \begin{cases} f(x) & \text{if } f(x) \downarrow \text{ and } g(x) \uparrow \\
g(x) & \text{if } f(x) \uparrow \text{ and } g(x) \downarrow \\
(f(x), g(x)) & \text{if } f(x) \downarrow \text{ and } g(x) \downarrow \\
\uparrow & \text{if } f(x) \uparrow \text{ and } g(x) \uparrow  \end{cases} \]
\end{example}

\begin{example} \normalfont  $k\text{-}\mathsf{CALG}^{op}_\bullet$ has classical products, so $k\text{-}\mathsf{CALG}_\bullet$ has coclassical coproducts where $A \& B = A \times B \times (A \otimes B)$, and the coclassical injections $p_0: A \to A \&B$ and ${p_1: B \to A\& B}$ are defined as: 
\begin{align*}
    p_0(a) = (a, 0, a \otimes 1) && p_1(b) = (0,b, 1 \otimes b)
\end{align*}
The coclassical copairing $\llangle f,g \rrangle: A \& B \to C$ is defined as follows: 
\begin{align*}
    \llangle f,g \rrangle(a,b, x \otimes y) = f(a) - f(a)g(1) + g(b) - f(1)g(b) + f(x)g(y)  
\end{align*}
\end{example}

It is important to note that for classical products, while on objects we have that ${A \& B = A \oplus B \oplus (A \times B)}$, on maps $f: A \to B$ and $g: C \to D$, the map $f \& g = \llangle p_0 f, p_1 g \rrangle$ may not be equal to $f \oplus g \oplus (f \times g) = \left [ \iota_0 f, \iota_1 g, \iota_2 \langle \pi_0 f, \pi_1 g \rangle \right]$, even though they are of the same type $A \oplus B \oplus (A \times B) \to C \oplus D \oplus (C \times D)$. Intuitively, the main difference between the two is that for $f \oplus g \oplus (f \times g)$, the third component $C \times D$ of its output in only depends on the third component $A \times B$ of the input, while for $f \& g$, the third component $C \times D$ of its output can depend on all three components $A$, $B$, and $A \times B$ of the input. To help better understand this difference, let us compare them explicitly in our main examples. 

\begin{example} \normalfont In $\mathsf{PAR}$, for partial functions $f: X \to Z$ and $g: Y \to W$, let us compare $f \& g$ and $f \sqcup g \sqcup (f \times g)$, both of which are of type $X \sqcup Y \sqcup (X \times Y) \to Z \sqcup W \sqcup (Z \times W)$. On the one hand, for $u \in X \sqcup Y \sqcup (X \times Y)$, we have that: 
\[ \left( f \sqcup g \sqcup (f \times g) \right) (u) = \begin{cases} f(u) & \text{if } u\in X  \text{ and } f(u) \downarrow \\
g(u) & \text{if } u\in Y \text{ and } g(u) \downarrow \\
(f(u_0), g(u_1)) & \text{if } u = (u_0, u_1) \in X \times Y \text{ and } f(u_0) \downarrow \text{ and } g(u_1) \downarrow \\
\uparrow & \text{o.w.}  \end{cases} \]
while on the other hand, we have that: 
\[ \left(f \& g \right) (u) = \begin{cases} f(u) & \text{if } u\in X  \text{ and } f(u) \downarrow \\
g(u) & \text{if } u\in Y \text{ and } g(u) \downarrow \\
(f(u_0), g(u_1)) & \text{if } u = (u_0, u_1) \in X \times Y \text{ and } f(x) \downarrow \text{ and } g(x) \downarrow \\
f(u_0) & \text{if } u = (u_0, u_1) \in X \times Y \text{ and } f(u_0) \downarrow \text{ and } g(u_1) \uparrow \\
g(u_1) & \text{if } u = (u_0, u_1) \in X \times Y \text{ and } f(u_0) \uparrow \text{ and } g(u_1) \downarrow \\
\uparrow & \text{o.w. } \end{cases} \]
As such, we clearly see that $f \& g$ and $f \sqcup g \sqcup (f \times g)$ are indeed different. The main difference between the two is that for $(x, y) \in X \times Y$, we have that $\left( f \sqcup g \sqcup (f \times g) \right)(x,y) = (f(x), g(y)) \in Z \times W$ always (when defined), while $(f \& g)(x,y)$ can equal $f(x) \in Z$, or $g(y) \in W$, or even $(f(x), g(y)) \in Z \times W$. As a particular case, consider when $g=0$. Then $(f \sqcup 0 \sqcup f \times 0)(x) = f(x)$ for all $x \in X$ when $f(x) \downarrow$ but is undefined for all $y \in Y$ and $(x,y) \in X \times Y$, while $(f \& 0)(x)=f(x)$ and $(f \& 0)(x,y)=f(x)$ for all $x \in X$ and $(x,y) \in X \times Y$ when $f(x) \downarrow$ and is undefined for all $y \in Y$. 
\end{example}

\begin{example} \normalfont In $k\text{-}\mathsf{CALG}_\bullet$, for a pair of non-unital $k$-algebra morphism $f: A \to C$ and $g: B \to D$, let us compare $f \&g$ and $f \times g \times (f \otimes g)$, which recall both are of type $A \times B \times (A \otimes B) \to C \times D \times (C \times D)$. On the one hand, we have that: 
\begin{gather*}
\left( f \times g \times (f \otimes g) \right)(a, b, x\otimes y) = (f(a), g(b), f(x) \otimes g(y))
\end{gather*}
while on the other hand we have that: 
\begin{gather*}
(f \& g)(a, b, x\otimes y) \\
= \left(f(a), g(b), f(a) \otimes 1 - f(a) \otimes g(1) + 1 \otimes g(b) - f(1) \otimes g(b) + f(x) \otimes g(y) \right)
\end{gather*}
Thus, we clearly see that the main difference between the two is that the resulting third component of $f \times g \times (f \otimes g)$ only depends on the third input, while the third component of $f \& g$ depends on all three inputs. In particular, taking $g=0$, we see that $\left( f \times 0 \times (f \otimes 0) \right)(a, b, x\otimes y)=(f(a), 0,0)$ while $(f\& g)(a,b,x\otimes y)= (f(a), 0, f(a) \otimes 1)$. 
\end{example}

We conclude this section with some useful identities regarding classical products, which we will need for the main results of this paper in Sec \ref{sec:CDRC}.  

\begin{lemma}\label{lem:cprod} In a distributive restriction category $\mathbb{X}$ with classical products, 
\begin{enumerate}[{\em (i)}]
\item \label{lem:cprod.i} $\overline{p_0} = 1_A \oplus  0 \oplus  1_{A \times B}$ and $\overline{p_1} = 0 \oplus  1_B \oplus  1_{A \times B}$; 
\item\label{lem:cprod.ii} $\overline{p_1}p_0 = \iota^\circ_2 \pi_0$ and $\overline{p_0}p_1 = \iota^\circ_2 \pi_1$; 
\item \label{lem:cprod.iii} $\llangle f, g \rrangle \iota_2^\circ = \langle f,g \rangle$;
\item \label{lem:cprod.iv}  $\llangle f,0 \rrangle = f \iota_0$ and $\llangle 0,f \rrangle = f \iota_1$;
\item \label{lem:cprod.v}  $\llangle f,f \rrangle = \langle f, f \rangle \iota_2$;
\item \label{lem:cprod.vi} $\llangle 0,0 \rrangle = 0$ 
\end{enumerate}
\end{lemma}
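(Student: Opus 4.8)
The plan is to derive each of the six identities from the uniqueness halves of the universal properties of the classical product (\ref{diag:pair2}) and the restriction product (\ref{diag:pair1}), together with the restriction axioms, Lemma~\ref{lem:rest-product}, and the quasi-projection identities of Lemma~\ref{lem:restcoprodzero}. The one auxiliary ingredient I would record first is that in a coCartesian restriction category restriction commutes with copairing, $\overline{[f_0,\dots,f_n]} = \overline{f_0} \oplus \dots \oplus \overline{f_n}$: precomposing the left-hand side with a (total) injection $\iota_j$ and applying \textbf{[R.4]} gives $\iota_j \overline{[f_0,\dots,f_n]} = \overline{\iota_j [f_0,\dots,f_n]}\,\iota_j = \overline{f_j}\,\iota_j$, which is exactly $\iota_j$ composed with the coproduct map on the right, so the two agree since maps out of a coproduct are determined by their composites with the injections.

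For (i) I would apply this to $p_0 = [1_A, 0, \pi_0]$ and $p_1 = [0, 1_B, \pi_1]$, using $\overline{1}=1$, $\overline{0}=0$ (restriction zeroes), and $\overline{\pi_i}=1$ (the projections are total), which yields $\overline{p_0} = 1_A \oplus 0 \oplus 1_{A\times B}$ and $\overline{p_1} = 0 \oplus 1_B \oplus 1_{A\times B}$. Part (ii) is then a direct copairing computation: writing $\overline{p_1} = [0,\iota_1,\iota_2]$ and composing with $p_0 = [1_A,0,\pi_0]$ collapses to $[0,0,\pi_0]$, which is precisely $\iota^\circ_2 \pi_0$ since $\iota^\circ_2 = [0,0,1_{A\times B}]$; the identity $\overline{p_0}p_1 = \iota^\circ_2 \pi_1$ is symmetric.

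Identity (iii) is the one requiring genuine chaining, and is where I expect the main (if modest) work to lie. Since $\langle f,g\rangle$ is the \emph{unique} map satisfying the two equations of (\ref{diag:pair1}), it suffices to verify that $\llangle f,g\rrangle \iota^\circ_2$ satisfies them. Using (ii) I would rewrite $\iota^\circ_2 \pi_0 = \overline{p_1}p_0$, so that $\llangle f,g\rrangle \iota^\circ_2 \pi_0 = \llangle f,g\rrangle \overline{p_1} p_0$; Lemma~\ref{lem:rest-product}.(\ref{lem:rest-product.ii}) turns $\llangle f,g\rrangle \overline{p_1}$ into $\llangle \overline{g}f, g\rrangle$, and then (\ref{diag:pair2}) gives $\overline{g}f$, matching the requirement $\langle f,g\rangle \pi_0 = \overline{g}f$. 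The computation for $\pi_1$ is dual, using $\iota^\circ_2\pi_1 = \overline{p_0}p_1$ and $\llangle f,g\rrangle \overline{p_0} = \llangle f, \overline{f}g\rrangle$, so that uniqueness of the restriction-product pairing forces $\llangle f,g\rrangle \iota^\circ_2 = \langle f,g\rangle$.

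Finally, (iv), (v), and (vi) all follow from the uniqueness of the classical pairing, so for each I would simply compute the two classical projections of the proposed map. For (iv), $\iota_0 p_0 = 1_A$ and $\iota_0 p_1 = 0$ give $f\iota_0 p_0 = f$ and $f\iota_0 p_1 = 0$, identifying $f\iota_0$ with $\llangle f,0\rrangle$ (and symmetrically for $\iota_1$). For (v), $\iota_2 p_0 = \pi_0$ and $\iota_2 p_1 = \pi_1$ give $\langle f,f\rangle \iota_2 p_0 = \langle f,f\rangle \pi_0 = \overline{f}f = f$ by (\ref{diag:pair1}) and \textbf{[R.1]}, and likewise for $p_1$. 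For (vi), the annihilation property $0 p_i = 0$ (or simply taking $f=0$ in (iv)) yields $\llangle 0,0\rrangle = 0$. The only real subtlety anywhere is keeping the copairing and quasi-projection bookkeeping in (i)–(ii) exact, so that (iii) lands on the restriction-product equations on the nose.
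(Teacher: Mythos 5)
Your proposal is correct and takes essentially the same approach as the paper's proof: (i) via the identity $\overline{[f_0,\hdots,f_n]} = \overline{f_0} \oplus \hdots \oplus \overline{f_n}$, (ii) by direct copairing computation, (iii) by verifying that $\llangle f,g \rrangle \iota^\circ_2$ satisfies both equations of (\ref{diag:pair1}) using (ii) and Lemma \ref{lem:rest-product}.(\ref{lem:rest-product.ii}), and (iv)--(vi) by computing the classical projections and invoking uniqueness of the classical pairing. The only (harmless) difference is that you prove the copairing-restriction identity inline via \textbf{[R.4]} and couniversality, where the paper simply cites it from the literature.
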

\begin{proof}  First recall that the copairing is compatible with composition in the following sense:
\begin{align}\label{eq:copair-comp}
    (g_0 \oplus \hdots \oplus g_n)[f_0, \hdots, f_n] h = [ g_0 f_0 h, \hdots, g_n f_n h]
\end{align}
Also recall that the restriction of a copairing is the coproduct of the restrictions \cite[Lemma 2.1]{cockett2007restriction}: 
\begin{align}\label{eq:rest-coprod}
    \overline{[f_0, \hdots, f_n]} = \overline{f_0} \oplus \hdots \oplus \overline{f_n}
\end{align}
\begin{enumerate}[{\em (i)}]
\item Taking the restriction of the copairing, we compute that: 
\begin{align*}
    \overline{p_0} &=~ \overline{\left[1_A, 0, \pi_0 \right]} \tag*{Def. of $p_0$} \\
    &=~ \overline{1_A} \oplus \overline{0} \oplus \overline{\pi_0} \tag*{(\ref{eq:rest-coprod})}\\
    &=~ 1_A \oplus 0 \oplus 1_{A \times B} \tag*{$1_A$ and $\pi_0$ are total, and rest. zero}
\end{align*}
So $\overline{p_0} = 1_A \oplus  0 \oplus  1_{A \times B}$, and similarly we can show that $\overline{p_1} = 0 \oplus  1_B \oplus  1_{A \times B}$. 
\item This follows from (\ref{lem:cprod.i}) above: 
\begin{align*}
    \overline{p_1}p_0 &=~ (0 \oplus  1_B \oplus  1_{A \times B}) \left[1_A, 0, \pi_0 \right] \tag*{Lemma \ref{lem:cprod}.(\ref{lem:cprod.i}) and Def. of $p_0$} \\
    &=~  \left[0, 0, 1_{A \times B} \right] \pi_0 \tag*{(\ref{eq:copair-comp})} \\
    &=~ \iota^\circ_2 \pi_0 \tag*{Def. of $\iota^\circ_2$}
\end{align*}
So $\overline{p_1}p_0 = \iota^\circ_2 \pi_0$, and similarly we can show that $\overline{p_0}p_1 = \iota^\circ_2 \pi_1$. 
\item We use (\ref{lem:cprod.ii}) above to show that $\llangle f, g \rrangle \iota_2^\circ$ satisfies (\ref{diag:pair1}). So we compute:
\begin{align*}
    \llangle f, g \rrangle \iota_2^\circ \pi_0 &=~  \llangle f, g \rrangle \overline{p_1}p_0 \tag*{Lemma \ref{lem:cprod}.(\ref{lem:cprod.ii})} \\
    &=~ \left\llangle \overline{g}f, \overline{f}g \right\rrangle p_0 \tag*{Lemma \ref{lem:rest-product}.(\ref{lem:rest-product.ii})} \\
    &=~ \overline{g}f \tag*{(\ref{diag:pair2})}
\end{align*}
So $\llangle f, g \rrangle \iota_2^\circ \pi_0 = \overline{g}f$, and similarly we can show that $\llangle f, g \rrangle \iota_2^\circ \pi_1 = \overline{f}g$. So by the universal property of the restriction product, it follows that $\llangle f, g \rrangle \iota_2^\circ = \langle f, g \rangle$. 
\item By definition of the classical projections, we have that $f \iota_0 p_0=f$ and $f \iota_0 p_1 =0$. Then by the universal property of the classical product, it follows that $\llangle f,0 \rrangle = f \iota_0$. Similarly, we also have that $\llangle 0,f \rrangle = f \iota_1$. 
\item By definition of the classical projections, the pairing $\langle-,- \rangle$, and \textbf{[R.1]}, we have that $\langle f, f \rangle \iota_2 p_0=f$ and $\langle f, f \rangle \iota_2 p_1=f$. Then by the universal property of the classical product, it follows that $\llangle f,f \rrangle = \langle f, f \rangle \iota_2$. 
\item This follows from (\ref{lem:cprod.iv}), so $\llangle 0,0 \rrangle = 0$. 
\end{enumerate}
\end{proof}

\section{Classical Restriction Categories}\label{sec:classical}

The main objective of this paper is to prove that a distributive restriction category with classical products is classical. In this section, we review the basics of classical restriction categories, as well as provide some new results regarding complements of restriction idempotents. For a more in-depth introduction to classical restriction categories, we refer the reader to \cite{cockett2009boolean,cockett2012differential}. 

A restriction category is classical if we can take the joins of maps and relative complements of maps. In order to properly define joins, we first need to discuss certain relations between maps in a restriction category. 

\begin{definition}\label{def:leq} In a restriction category $\mathbb{X}$, for parallel maps $f: A \to B$ and $g: A \to B$ we say that: 
\begin{enumerate}[{\em (i)}]
\item $f$ is \textbf{less than or equal to} $g$ \cite[Sec 2.1.4]{cockett2002restriction}, written $f \leq g$, if $\overline{f}g= f$;
\item $f$ and $g$ are \textbf{compatible} \cite[Prop 6.3]{cockett2009boolean}, written $f \smile g$, if $\overline{f}g = \overline{g}f$; 
\end{enumerate}
If $\mathbb{X}$ has restriction zeroes, then we say that: 
\begin{enumerate}[{\em (i)}]
\setcounter{enumi}{2}
\item $f$ and $g$ are \textbf{disjoint} \cite[Prop 6.2]{cockett2009boolean}, written $f \perp g$, if $\overline{f}g = 0$ (or equivalently $\overline{g}f =0)$. 
\end{enumerate}
\end{definition}

Intuitively, $f \leq g$ means that whenever $f$ is defined, $g$ is equal to $f$; $f \smile g$ means that whenever they are both defined, they are equal; and $f \perp g$, means that whenever one is defined, the other is not. Note that disjoint maps are compatible, so if $ f\perp g$ then $f \smile g$ as well. We can also consider the join of compatible maps with respect to the partial order. 

\begin{definition}\label{def:join} In a restriction category $\mathbb{X}$, the \textbf{join} \cite[Def 6.7]{cockett2009boolean} (if it exists) of a finite family of parallel maps $f_0: A \to B$, ..., $f_n : A\to B$ that is pairwise compatible, so $f_i \smile f_j$ for all $0 \leq i,j, \leq n$, is a (necessarily unique) map $f_0 \vee \hdots \vee f_n: A \to B$ such that:
\begin{enumerate}[{\bf [J.1]}]
\item $f_i \leq f_0 \vee \hdots \vee f_n$ for all $0 \leq i \leq n$;
\item If $g: A \to B$ is a map such that $f_i \leq g$ for all $0 \leq i \leq n$ then $f_0 \vee \hdots \vee f_n \leq g$;
\end{enumerate}
A \textbf{join restriction category} \cite[Def 10.1]{cockett2009boolean} is a restriction category $\mathbb{X}$ such that the join of any finite family of pairwise compatible maps exists and is preserved by pre-composition, that is: 
\begin{enumerate}[{\bf [J.1]}]
\setcounter{enumi}{2}
\item For any map $h: A^\prime \to A$, $h\left(f_0 \vee \hdots \vee f_n \right) = hf_0 \vee \hdots \vee hf_n$.
\end{enumerate}
\end{definition}

Here are now some useful identities regarding joins. In particular, join restriction categories have restriction zeroes, given by the join of the empty family, and joins are also preserved by post-composition. 

\begin{lemma}\label{lem:join}\cite[Prop 2.14]{cockett2012differential} In a join restriction category $\mathbb{X}$,
\begin{enumerate}[{\em (i)}]
\item \label{lem:join.i} $\mathbb{X}$ has restriction zeroes;
\item \label{lem:join.ii} $f \vee 0 = f$; 
\item \label{lem:join.iii} $\left(f_0 \vee \hdots \vee f_n \right)k = f_0k \vee \hdots \vee f_nk$
\item \label{lem:join.iv} $\overline{f_0 \vee \hdots \vee f_n} = \overline{f_0} \vee \hdots \vee \overline{f_n}$
\item \label{lem:join.v} $\overline{f_j}\left( f_0 \vee \hdots \vee f_n \right) = f_j$
\item \label{lem:join.vi} Restriction idempotents $e_i$ are always pairwise compatible and $e_0 \vee \hdots \vee e_n$ is a restriction idempotent;
\item \label{lem:join.vii} If $e_1$ and $e_2$ are restriction idempotents, then $e_1 \perp e_2$ if and only if $e_1 e_2 =0$. 
\end{enumerate}
\end{lemma}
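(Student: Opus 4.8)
The plan is to treat the seven items not in the order listed but in an order respecting their logical dependencies, taking as standard the facts that $\leq$ is a partial order on each hom-set, that composition on either side is monotone for $\leq$, and that $f \leq g$ implies $\overline{f} \leq \overline{g}$ (all three follow directly from \textbf{[R.1]}--\textbf{[R.4]}). Throughout I write $0_{A,B}$ for the join of the empty family on $\mathbb{X}(A,B)$, which exists since the empty family is vacuously pairwise compatible; by \textbf{[J.2]} it is the bottom element, so $0_{A,B} \leq g$ for every $g: A \to B$.

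For item (i) the real work is to show these empty joins assemble into restriction zeroes. Preservation of the empty join under precomposition (\textbf{[J.3]}) immediately gives $h\, 0_{A,B} = 0_{A',B}$, so the $0_{A,B}$ are closed under precomposition; in particular $0_{A,A}\, 0_{A,B} = 0_{A,B}$. The crux -- and the step where the naive argument runs in circles -- is $\overline{0} = 0$. I would first observe that $0_{A,A} \leq 1_A$ forces $\overline{0_{A,A}}\, 1_A = 0_{A,A}$, i.e.\ $\overline{0_{A,A}} = 0_{A,A}$, so the endo-bottoms are restriction idempotents. I would then bootstrap to arbitrary codomains: applying the restriction to $0_{A,A}\, 0_{A,B} = 0_{A,B}$ and using Lemma \ref{lem:rest}.(\ref{lem:rest.idem.R3}) with the idempotent $0_{A,A}$ gives $0_{A,A}\, \overline{0_{A,B}} = \overline{0_{A,B}}$; since restriction idempotents commute (Lemma \ref{lem:rest}.(\ref{lem:rest.idem.R2})) and precomposition by the empty join yields $\overline{0_{A,B}}\, 0_{A,A} = 0_{A,A}$, these combine to $\overline{0_{A,B}} = 0_{A,A}$, which is exactly $\overline{0} = 0$. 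Closure under postcomposition then drops out: for $k: B \to C$, expanding $0_{A,C} \leq 0_{A,B}k$ as $\overline{0_{A,C}}\,(0_{A,B}k) = 0_{A,C}$ and rewriting $\overline{0_{A,C}} = 0_{A,A}$ together with $0_{A,A}\,0_{A,B} = 0_{A,B}$ gives $0_{A,B}k = 0_{A,C}$. Hence the $0_{A,B}$ are genuine zeroes with $\overline{0}=0$.

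The remaining items are lighter but must be sequenced carefully, since the obvious proofs of (vi) and (iv) are mutually circular. I would prove (vi) first, without appeal to (iv): restriction idempotents $e_i$ are pairwise compatible because compatibility of $e_i, e_j$ reduces to $e_i e_j = e_j e_i$, which holds by Lemma \ref{lem:rest}.(\ref{lem:rest.idem.R2}); and for $e := e_0 \vee \hdots \vee e_n$ one shows each $e_i \leq \overline{e}$ (monotonicity of restriction applied to $e_i \leq e$, using $\overline{e_i}=e_i$), so \textbf{[J.2]} gives $e \leq \overline{e}$, and unwinding $e \leq \overline{e}$ as $\overline{e}\,\overline{e} = e$ together with idempotency of $\overline{e}$ forces $\overline{e} = e$. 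With (vi) in hand, (iv) follows: one inequality is monotonicity of restriction plus \textbf{[J.2]}; for the other, setting $e := \overline{f_0} \vee \hdots \vee \overline{f_n}$ one checks $e f_i = f_i$, hence $e\,(f_0 \vee \hdots \vee f_n) = f_0 \vee \hdots \vee f_n$ by \textbf{[J.3]}, and taking restrictions gives $\overline{f_0 \vee \hdots \vee f_n} \leq e$. Item (iii) then uses (iv): writing $g := f_0 \vee \hdots \vee f_n$, from $\overline{f_i}\, g = f_i$ we get $\overline{f_i}\,(gk) = f_i k$, and joining over $i$ via \textbf{[J.3]} turns the left side into $(\overline{f_0} \vee \hdots \vee \overline{f_n})(gk) = \overline{g}\,(gk) = gk$ (by (iv) and \textbf{[R.1]}), while the right side is $f_0 k \vee \hdots \vee f_n k$; this also yields existence of the latter join.

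The three genuinely routine items I would dispatch last. For (ii), $f$ and $0$ are compatible (both sides of $\overline{f}\,0 = \overline{0}\,f$ equal $0$), and $f$ is an upper bound of $\{f, 0\}$ since $0 \leq f$, so antisymmetry against \textbf{[J.1]}--\textbf{[J.2]} gives $f \vee 0 = f$. For (v), \textbf{[J.3]} rewrites $\overline{f_j}\,(f_0 \vee \hdots \vee f_n)$ as $\overline{f_j} f_0 \vee \hdots \vee \overline{f_j} f_n$; every term satisfies $\overline{f_j} f_i = \overline{f_i} f_j$ (compatibility) and $\overline{f_i} f_j \leq f_j$, while the $j$-th term is $f_j$ itself, so the join is $f_j$. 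Finally (vii) is immediate from the definitions: $e_1 \perp e_2$ means $\overline{e_1} e_2 = 0$, and $\overline{e_1} = e_1$. The main obstacle, as flagged, is item (i) -- getting $\overline{0} = 0$ off the ground -- since every direct attempt to compare $\overline{0_{A,B}}$ with the bottom of $\mathbb{X}(A,A)$ collapses to a tautology until one first exploits $0_{A,A} \leq 1_A$.
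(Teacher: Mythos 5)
Most of your proof is sound, and the sequencing is genuinely careful: the bootstrap in (i) (first extracting $\overline{0_{A,A}} = 0_{A,A}$ from $0_{A,A} \leq 1_A$, then transporting to arbitrary codomains via Lemma \ref{lem:rest}.(\ref{lem:rest.idem.R3}) and \textbf{[J.3]}, then getting postcomposition closure from $0_{A,C} \leq 0_{A,B}k$) is correct, as are (ii), (iv), (v), (vi), (vii), and your proof of (iv) does successfully avoid (iii). For context: the paper itself gives no proof of this lemma, citing it to \cite[Prop 2.14]{cockett2012differential}, so yours is a from-scratch argument. However, your proof of (iii) is circular. The step ``joining over $i$ via \textbf{[J.3]}'' asserts $\left(\overline{f_0} \vee \hdots \vee \overline{f_n}\right)(gk) = \overline{f_0}(gk) \vee \hdots \vee \overline{f_n}(gk)$. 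That is \emph{not} an instance of \textbf{[J.3]}: in the paper's diagrammatic order, \textbf{[J.3]} reads $h(u_0 \vee \hdots \vee u_n) = hu_0 \vee \hdots \vee hu_n$, with the join in the \emph{second} factor of each composite and the fixed map $h$ in the first. The identity you invoke has the join in the \emph{first} factor and the fixed map $gk$ in the second --- which is exactly statement (iii) itself, applied to the family $\overline{f_i}$ and the map $gk$. So as written, (iii) is proved by assuming (iii).

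The gap is fixable without disturbing your architecture, but it needs a real argument, since postcomposition stability is the one item that does not fall out of \textbf{[J.1]}--\textbf{[J.3]} by formal re-bracketing. Write $g = f_0 \vee \hdots \vee f_n$. Since $f_ik \leq gk$ (monotone postcomposition), the $f_ik$ have a common upper bound and hence are pairwise compatible (if $a \leq m$ and $b \leq m$ then $\overline{a}b = \overline{a}\,\overline{b}m = \overline{b}\,\overline{a}m = \overline{b}a$), so $u := f_0k \vee \hdots \vee f_nk$ exists and $u \leq gk$ by \textbf{[J.2]}. The key claim is $\overline{gk} = \overline{f_0k} \vee \hdots \vee \overline{f_nk}$. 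First, for each $i$,
\begin{align*}
\overline{gk}\,f_i \;=\; \overline{gk}\,\overline{f_i}\,g \;=\; \overline{f_i}\,\overline{gk}\,g \;=\; \overline{f_i}\,g\overline{k} \;=\; f_i\overline{k},
\end{align*}
using \textbf{[J.1]} ($f_i = \overline{f_i}g$), commutation of restriction idempotents (Lemma \ref{lem:rest}.(\ref{lem:rest.idem.R2})), and \textbf{[R.4]} ($g\overline{k} = \overline{gk}\,g$). Hence, by a \emph{legitimate} use of \textbf{[J.3]} (fixed map $\overline{gk}$ in front, join behind), $g\overline{k} = \overline{gk}\,g = \overline{gk}f_0 \vee \hdots \vee \overline{gk}f_n = f_0\overline{k} \vee \hdots \vee f_n\overline{k}$. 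Taking restrictions and using Lemma \ref{lem:rest}.(\ref{lem:rest.i}) together with your item (iv) gives $\overline{gk} = \overline{g\overline{k}} = \overline{f_0\overline{k}} \vee \hdots \vee \overline{f_n\overline{k}} = \overline{f_0k} \vee \hdots \vee \overline{f_nk}$. But item (iv) applied to the family $f_ik$ also gives $\overline{u} = \overline{f_0k} \vee \hdots \vee \overline{f_nk}$, so $\overline{u} = \overline{gk}$. Finally, $u \leq gk$ unpacks to $u = \overline{u}(gk) = \overline{gk}(gk) = gk$ by \textbf{[R.1]}, which is (iii); existence of the right-hand join was established along the way.
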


Here are also some compatibility identities between joins and restriction coproducts which will be useful for proofs in later sections. 

\begin{lemma}\label{lem:class-coprod} In a coCartesian restriction category $\mathbb{X}$ which is also a join restriction category, 
    \begin{enumerate}[{\em (i)}]
\item \label{lem:class-coprod.ii}  For any family of maps $f_0$, ..., and $f_n$, the maps $\iota^\circ_0 f_0 \iota_0$, ..., and $\iota^\circ_n f_n \iota_n$ are pairwise disjoint, and $\iota^\circ_0 f_0 \iota_0 \vee \hdots \vee \iota^\circ_n f_n \iota_n = f_0 \oplus \hdots \oplus f_n$; 
\item \label{lem:class-coprod.iv} $\iota^\circ_0 \iota_0$, ..., and $\iota^\circ_n \iota_n$ are pairwise disjoint, and $\iota^\circ_0 \iota_0 \vee \hdots \vee \iota^\circ_n \iota_n = 1_{A_0 \oplus \hdots \oplus A_n}$. 
\end{enumerate}
\end{lemma}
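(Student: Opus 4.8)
The plan is to prove the first item, Lemma~\ref{lem:class-coprod}.(\ref{lem:class-coprod.ii}), in full and then obtain Lemma~\ref{lem:class-coprod}.(\ref{lem:class-coprod.iv}) for free as the special case $f_j = 1_{A_j}$: there $\iota^\circ_j 1_{A_j} \iota_j = \iota^\circ_j \iota_j$ and $1_{A_0} \oplus \cdots \oplus 1_{A_n} = 1_{A_0 \oplus \cdots \oplus A_n}$, so nothing further is needed. Since a join restriction category has restriction zeroes (Lemma~\ref{lem:join}.(\ref{lem:join.i})), the quasi-projections $\iota^\circ_j$ are defined and the statement is meaningful. Writing $f_j : A_j \to B_j$, each $\iota^\circ_j f_j \iota_j$ is a map $A_0 \oplus \cdots \oplus A_n \to B_0 \oplus \cdots \oplus B_n$, and these are parallel; once pairwise disjointness is established they are pairwise compatible, so their join exists by the defining axiom of a join restriction category.

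For pairwise disjointness I would first record the general restriction fact $\overline{hk}\,\overline{h} = \overline{hk}$, proved from \textbf{[R.4]}, \textbf{[R.3]} and Lemma~\ref{lem:rest}.(\ref{lem:rest.i}); it says $\overline{hk} \leq \overline{h}$ in the idempotent order. Applying it with $h = \iota^\circ_i$ gives $\overline{\iota^\circ_i f_i \iota_i}\,\overline{\iota^\circ_i} = \overline{\iota^\circ_i f_i \iota_i}$. Next, for $i \neq j$, I compute $\overline{\iota^\circ_i}\,\iota^\circ_j = \iota^\circ_i \iota_i \iota^\circ_j = \iota^\circ_i \cdot 0 = 0$, using $\overline{\iota^\circ_i} = \iota^\circ_i \iota_i$ (Lemma~\ref{lem:restcoprodzero}.(\ref{lem:restcoprodzero.iv})) and $\iota_i \iota^\circ_j = 0$ (Lemma~\ref{lem:restcoprodzero}.(\ref{lem:restcoprodzero.iii})). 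Combining the two, $\overline{\iota^\circ_i f_i \iota_i}\,(\iota^\circ_j f_j \iota_j) = \overline{\iota^\circ_i f_i \iota_i}\,\overline{\iota^\circ_i}\,\iota^\circ_j f_j \iota_j = 0$, which is exactly $\iota^\circ_i f_i \iota_i \perp \iota^\circ_j f_j \iota_j$.

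For the join identity, set $F := \iota^\circ_0 f_0 \iota_0 \vee \cdots \vee \iota^\circ_n f_n \iota_n$. Since the coproduct injections are jointly epic, it suffices to verify $\iota_i F = \iota_i (f_0 \oplus \cdots \oplus f_n)$ for every $i$. The right-hand side equals $f_i \iota_i$ by definition of the coproduct functor on maps. For the left-hand side, precomposition preserves joins (\textbf{[J.3]}), so $\iota_i F = \iota_i \iota^\circ_0 f_0 \iota_0 \vee \cdots \vee \iota_i \iota^\circ_n f_n \iota_n$; by Lemma~\ref{lem:restcoprodzero}.(\ref{lem:restcoprodzero.iii}) the term $j = i$ is $1_{A_i} f_i \iota_i = f_i \iota_i$ while each term $j \neq i$ is $0$, and discarding the zero joins via Lemma~\ref{lem:join}.(\ref{lem:join.ii}) leaves $\iota_i F = f_i \iota_i$. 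Hence $F = f_0 \oplus \cdots \oplus f_n$.

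I do not expect a genuine obstacle; the work is bookkeeping in the restriction calculus. The two places that warrant care are the general absorption $\overline{hk} \leq \overline{h}$, which has to be derived rather than assumed, and the reduction of the join identity, where one must invoke \emph{pre}composition preservation of joins (\textbf{[J.3]}) together with joint epicness of the total injections to pass to a componentwise check. Everything else follows by the indicated substitutions and by reading off the second item as the instance $f_j = 1_{A_j}$.
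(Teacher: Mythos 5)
Your proposal is correct and follows essentially the same route as the paper: pairwise disjointness by a restriction-calculus computation reducing to the orthogonality of the quasi-projections (Lemma \ref{lem:restcoprodzero}), the join identity by precomposing with the injections using \textbf{[J.3]} and the couniversal property of the coproduct, and part (\ref{lem:class-coprod.iv}) as the special case $f_j = 1_{A_j}$. The only cosmetic difference is in the disjointness step, where you derive the absorption fact $\overline{hk}\,\overline{h} = \overline{hk}$ while the paper instead inserts $\overline{\iota^\circ_j}$ via \textbf{[R.1]} and commutes it with \textbf{[R.2]} and \textbf{[R.3]}; both are equally valid bookkeeping.
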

\begin{proof} 
   \begin{enumerate}[{\em (i)}]
\item For $i \neq j$, we compute: 
\begin{align*}
    \overline{\iota^\circ_i f_i \iota_i} \iota^\circ_j f_j \iota_j &=~ \overline{\iota^\circ_i f_i \iota_i} \overline{\iota^\circ_j}\iota^\circ_j f_j \iota_j \tag*{\textbf{[R.1]}} \\
    &=~ \overline{\iota^\circ_j} \overline{\iota^\circ_i f_i \iota_i} \iota^\circ_j f_j \iota_j \tag*{\textbf{[R.2]}} \\
    &=~  \overline{\overline{\iota^\circ_j} \iota^\circ_i f_i \iota_i} \iota^\circ_j f_j \iota_j\tag*{\textbf{[R.3]}} \\
    &=~ \overline{0} \iota^\circ_j f_j \iota_j \tag*{Lemma \ref{lem:restcoprodzero}.(\ref{lem:restcoprodzero.v})} \\ 
    &=~ 0 \tag*{Rest. zero}
    \end{align*}
So we have that $\iota^\circ_i f_i \iota_i \perp \iota^\circ_j f_j \iota_j$ for $i \neq j$. Thus we can take their join $\iota^\circ_0 f_0 \iota_0 \vee \hdots \vee \iota^\circ_n f_n \iota_n$. Precomposing with the injection maps, we compute:
\begin{align*}
    \iota_j \left( \iota^\circ_0 f_0 \iota_0 \vee \hdots \vee \iota^\circ_n f_n \iota_n \right) &=~ \iota_j\iota^\circ_0 f_0 \iota_0 \vee \hdots \vee \iota_j\iota^\circ_n f_n \iota_n \tag*{\textbf{[J.3]}} \\
    &=~ 0 \vee \hdots \vee 0 \vee f_j \iota_j \vee 0 \vee \hdots \vee 0 \tag*{Lemma \ref{lem:restcoprodzero}.(\ref{lem:restcoprodzero.iii})} \\ 
    &=~ f_j \iota_j \tag*{Lemma \ref{lem:join}.(\ref{lem:join.ii})} 
\end{align*}
So by the couniversal property of the coproduct, $\iota^\circ_0 f_0 \iota_0 \vee \hdots \vee \iota^\circ_n f_n \iota_n = [f_0 \iota_0, \hdots, f_n \iota_n]$, which we can alternatively write as $\iota^\circ_0 f_0 \iota_0 \vee \hdots \vee \iota^\circ_n f_n \iota_n = f_0 \oplus \hdots \oplus f_n$. 
    \item This is a special case of (\ref{lem:class-coprod.ii}) by setting $f_j = 1_{A_j}$. 
\end{enumerate}
\end{proof}

We now turn our attention to classical restriction categories, which are join restriction categories that also have relative complements of maps. 

\begin{definition}\label{def:classrest} In a join restriction category $\mathbb{X}$, for parallel maps $f: A \to B$ and $g: A \to B$ such that $f \leq g$, the \textbf{relative complement} \cite[Sec 13]{cockett2009boolean} of $f$ in $g$ (if it exists) is a (necessarily unique) map $g \backslash f: A \to B$ such that\footnote{We note that in \cite{cockett2009boolean}, the axioms were written as (1) $g \backslash f \leq g$, (2) $g \backslash f \perp f$, and (3) $g \leq g \backslash f \vee f$. However, it is straightforward to check that \textbf{[RC.2]} is an equivalent way of expressing (1) and (3).}: 
\begin{enumerate}[{\bf [RC.1]}]
\item $g \backslash f \perp f$
\item $g \backslash f \vee f =g$
\end{enumerate}
A \textbf{classical restriction category} \cite[Prop 13.1]{cockett2009boolean} is a join restriction category $\mathbb{X}$ such that all relative complements exists 
\end{definition}

Intuitively, $g \backslash f$ is undefined when $f$ is defined and is equal to $g$ whenever $f$ is undefined. Here are our main examples of classical restriction categories. 

\begin{example} \normalfont  $\mathsf{PAR}$ is a classical restriction category where: 
\begin{enumerate}[{\em (i)}]
\item $f \leq g$ if $g(x)=f(x)$ whenever $f(x) \downarrow$.
\item $f \smile g$ if $f(x)=g(x)$ whenever both $f(x) \downarrow$ and $g(x) \downarrow$.
\item $f \perp g$ if $f(x) \uparrow$ whenever $g(x) \downarrow$, and $g(x) \uparrow$ whenever $f(x) \downarrow$.
\item If $f \smile g$, then their join $f \vee g$ is defined as follows: 
\[ (f \vee g) (x) = \begin{cases} f(x) & \text{if } f(x) \downarrow \text{ and } g(x) \uparrow \\
g(x) & \text{if } f(x) \uparrow \text{ and } g(x) \downarrow \\
f(x)=g(x) & \text{if } f(x) \downarrow \text{ and } g(x) \downarrow \\
\uparrow & \text{if } f(x) \uparrow \text{ and } g(x) \uparrow  \end{cases} \]
\item If $f \leq g$, then the relative complement $g \backslash f$ is defined as follows: 
\[ (g \backslash f) (x) = \begin{cases} g(x) & \text{if } f(x) \uparrow \text{ and } g(x) \downarrow \\
\uparrow & \text{if } f(x) \downarrow \text{ or } g(x) \uparrow  \end{cases} \]
\end{enumerate}
\end{example}

\begin{example} \normalfont  $k\text{-}\mathsf{CALG}^{op}_\bullet$ is a classical restriction category, so $k\text{-}\mathsf{CALG}_\bullet$ is a coclassical corestriction category where: 
\begin{enumerate}[{\em (i)}]
\item $f \leq g$ if $f(1)g(a) = f(a)$. 
\item $f \smile g$ if $g(1)f(a) = f(1) g(a)$.
\item $f \perp g$ if $g(1)f(a) = 0 = f(1) g(a)$. 
\item If $f \smile g$, then their join $f \vee g$ is defined as follows: 
\[ (f \vee g)(a) = f(a) + g(a) - g(1)f(a) = f(a) + g(a) - f(1) g(a) \]
\item If $f \leq g$, then the relative complement $g \backslash f$ is defined as follows: 
\[ (g \backslash f) (a) = g(a) - f(1)g(a)   \]
\end{enumerate}
That $k\text{-}\mathsf{CALG}^{op}_\bullet$ is a classical restriction category is a new observation that follows from the main results of this paper. 
\end{example}

We conclude this section by discussing complements of restriction idempotents. Indeed, note that a restriction idempotent $e: A \to A$ is always less than or equal to the identity $1_A: A \to A$, so $e \leq 1_A$. As such, we can consider the relative complement of a restriction idempotent in the identity. 

\begin{definition}\label{def:ecomp}  In a classical restriction category $\mathbb{X}$, if ${e: A \to A}$ is a restriction idempotent, we denote the relative complement of $e$ in $1_A$ by $e^c = 1_A \backslash e$. We call $e^c$ the \textbf{complement} of $e$. 
\end{definition}

Complements of restriction idempotents will play a crucial role in proving that a classical distributive restriction category has classical products. Indeed, we will be able to define the classical pairing $\llangle -,- \rrangle$ using joins and complements of restrictions. Here are now some useful identities regarding complements of restriction idempotents. 

\begin{lemma} \label{lem:ecomp}  In a classical restriction category $\mathbb{X}$, 
    \begin{enumerate}[{\em (i)}]
\item \label{lem:ecomp.0} If $e$ is a restriction idempotent, then $e^c$ is a restriction idempotent and ${e^c}^c=e$; 
\item \label{lem:ecomp.ii} If $e$ is a restriction idempotent, then $e e^c = 0$ and $e \vee e^c = 1$; 
\item \label{lem:ecomp.iv} If $e^\prime$ is a restriction idempotent such that $ee^\prime=0$ and $e \vee e^\prime = 1$, then $e^c = e^\prime$;
\item \label{lem:ecomp.vii} If $e_1$ and $e_2$ are restriction idempotents, then $e^c_1 \vee e^c_2 = (e_1 e_2)^c$ and $e^c_1 e_2^c =(e_1 \vee e_2)^c$;
\item \label{lem:ecomp.viii} $1^c = 0$ and $0^c = 1$; 
\item \label{lem:ecomp.i} $\overline{f}^c f = 0$;
\item \label{lem:ecomp.ix} $\overline{f}^c \overline{g}^c = \overline{g}^c \overline{f}^c$;
\item \label{lem:ecomp.x} $\overline{ \overline{g}^c f}^c = \overline{g} \vee \overline{f}^c$; 
\item  \label{lem:ecomp.vi} $f \overline{g}^c = \overline{fg}^c f$. 
\end{enumerate}
\end{lemma}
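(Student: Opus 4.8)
The plan is to bootstrap everything from the defining properties \textbf{[RC.1]} and \textbf{[RC.2]} of $e^c = 1_A \backslash e$ together with the uniqueness of relative complements, proving the parts in an order that lets later parts reuse earlier ones. I would start with part (\ref{lem:ecomp.ii}): since $e^c \perp e$ means $\overline{e}\, e^c = 0$ by definition and $e$ is a restriction idempotent ($\overline{e}=e$), \textbf{[RC.1]} gives $e e^c = 0$ at once, while \textbf{[RC.2]} (with commutativity of $\vee$) gives $e \vee e^c = 1$. For part (\ref{lem:ecomp.0}), note $e^c \leq e \vee e^c = 1_A$ by \textbf{[J.1]}, so $\overline{e^c} = \overline{e^c}\,1_A = e^c$, i.e.\ $e^c$ is a restriction idempotent; and ${e^c}^c = e$ follows once I have the uniqueness characterization. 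That characterization is part (\ref{lem:ecomp.iv}): if $e e' = 0$ and $e \vee e' = 1$ then $e'$ satisfies \textbf{[RC.1]} (via Lemma \ref{lem:join}.(\ref{lem:join.vii})) and \textbf{[RC.2]}, so $e' = 1\backslash e = e^c$ by uniqueness of relative complements. Applying this to $e$ against $e^c$ (using part (\ref{lem:ecomp.ii})) yields ${e^c}^c = e$, and applying it to $0$ against $1$ yields part (\ref{lem:ecomp.viii}).

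For the De Morgan laws (\ref{lem:ecomp.vii}), I would prove the meet form $e_1^c e_2^c = (e_1 \vee e_2)^c$ first, using part (\ref{lem:ecomp.iv}). Disjointness $(e_1\vee e_2)(e_1^c e_2^c) = 0$ is routine: precomposition distributes over the join (\textbf{[J.3]}) and each term vanishes since restriction idempotents commute (Lemma \ref{lem:rest}.(\ref{lem:rest.idem.R2})) and $e_i e_i^c = 0$. The covering condition $(e_1\vee e_2) \vee e_1^c e_2^c = 1$ is the one subtle point: I would expand $e_1^c = e_1^c(e_2 \vee e_2^c) = e_1^c e_2 \vee e_1^c e_2^c$, so that $1 = e_1 \vee e_1^c = e_1 \vee e_1^c e_2 \vee e_1^c e_2^c$, and then use $e_1^c e_2 \leq e_2$ together with monotonicity of $\vee$ to absorb the middle term into $e_2$. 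The join form $e_1^c \vee e_2^c = (e_1 e_2)^c$ then follows by applying the meet form to $e_1^c, e_2^c$ and taking complements, via ${e^c}^c = e$ (part (\ref{lem:ecomp.0})).

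The remaining static identities are then short. Part (\ref{lem:ecomp.i}), $\overline{f}^c f = 0$, follows by writing $f = \overline{f}f$ (\textbf{[R.1]}) and using $\overline{f}^c\,\overline{f} = 0$ from part (\ref{lem:ecomp.ii}). Part (\ref{lem:ecomp.ix}) is immediate since complements of restriction idempotents are again restriction idempotents of the same type, which commute (Lemma \ref{lem:rest}.(\ref{lem:rest.idem.R2})). For part (\ref{lem:ecomp.x}), I would compute $\overline{\overline{g}^c f} = \overline{g}^c\,\overline{f}$ using Lemma \ref{lem:rest}.(\ref{lem:rest.idem.R3}) (as $\overline{g}^c$ is a restriction idempotent), then apply the De Morgan law $(e_1 e_2)^c = e_1^c \vee e_2^c$ and ${e^c}^c = e$ to get $(\overline{g}^c\,\overline{f})^c = \overline{g} \vee \overline{f}^c$.

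The main obstacle is the last identity (\ref{lem:ecomp.vi}), $f\overline{g}^c = \overline{fg}^c f$, a complemented analogue of \textbf{[R.4]}. The naive strategy of stripping the trailing $f$ fails, because $\overline{f\overline{g}^c} \neq \overline{fg}^c$ in general. Instead I would prove it by uniqueness of relative complements, showing that both $f\overline{g}^c$ and $\overline{fg}^c f$ are the relative complement of the common map $\overline{fg}\,f = f\overline{g}$ (equal by \textbf{[R.4]}) inside $f$. For the covering conditions, $\overline{fg}\,f \vee \overline{fg}^c f = (\overline{fg}\vee\overline{fg}^c)f = f$ and $\overline{fg}\,f \vee f\overline{g}^c = f\overline{g}\vee f\overline{g}^c = f(\overline{g}\vee\overline{g}^c) = f$, using \textbf{[J.3]}, Lemma \ref{lem:join}.(\ref{lem:join.iii}) and $\overline{g}\vee\overline{g}^c = 1$. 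For the disjointness conditions I would reduce each to a product of commuting restriction idempotents that collapses to $0$: one uses $\overline{fg}^c\,\overline{fg} = 0$ directly, and the other rewrites $f\overline{g}^c = \overline{f\overline{g}^c}f$ (\textbf{[R.4]}) together with $\overline{fg}\,f = f\overline{g}$ to expose the factor $\overline{g}\,\overline{g}^c = 0$. Uniqueness of the relative complement then forces $f\overline{g}^c = \overline{fg}^c f$.
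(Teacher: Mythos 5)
Your proposal is correct, but it takes a genuinely different route from the paper's proof. The paper imports the calculus of relative complements from \cite[Lemma 13.14]{cockett2009boolean} --- the identities $\overline{g \backslash f} = \overline{g} \backslash \overline{f}$, $g \backslash (g \backslash f) = f$, $(g_1 \backslash f_1)(g_2 \backslash f_2) = g_1 g_2 \backslash (g_1 f_2 \vee f_1 g_2)$, $h(g\backslash f)k = hgk \backslash hfk$, $f \backslash f = 0$, $g \backslash 0 = g$ --- and then computes each item directly from them; in particular, the involution ${e^c}^c = e$ comes from $g\backslash(g\backslash f)=f$, the De Morgan laws come from the product identity, and the hardest item, $f\overline{g}^c = \overline{fg}^c f$, is established by a chain of rewritings that shuttles the complement through $h(g\backslash f)k = hgk\backslash hfk$. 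You instead bootstrap everything from \textbf{[RC.1]}, \textbf{[RC.2]}, and uniqueness of relative complements, making the uniqueness characterization (your part (\ref{lem:ecomp.iv})) the workhorse: it gives the involution, the constants, and --- via your clever absorption argument $1 = e_1 \vee e_1^c e_2 \vee e_1^c e_2^c \leq e_1 \vee e_2 \vee e_1^c e_2^c$ --- the De Morgan laws. Your treatment of (\ref{lem:ecomp.vi}) is the most substantive departure: rather than rewriting, you exhibit both $f\overline{g}^c$ and $\overline{fg}^c f$ as relative complements of the common map $\overline{fg}f = f\overline{g}$ inside $f$ (the disjointness and covering checks you sketch all go through, using \textbf{[R.4]}, its idempotent analogue $fe = \overline{fe}f$, and $\overline{g}\,\overline{g}^c = 0$), and conclude by uniqueness. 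What each approach buys: the paper's is shorter once the cited identities are granted; yours is self-contained, never invoking the external lemma, and shows that the entire statement is a formal consequence of the uniqueness of complements --- arguably a more conceptual explanation of why the \textbf{[R.4]}-analogue holds.
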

\begin{proof} First recall some useful identities about the relative complement \cite[Lemma 13.14]{cockett2009boolean}: 
\begin{gather}
    \overline{g \backslash f} = \overline{g} \backslash \overline{f} \label{lem:rc.i} \\ 
    g \backslash \left( g \backslash f \right) = f \label{lem:rc.ii} \\ 
    (g_1 \backslash f_1) (g_2 \backslash f_2) = g_1 g_2 \backslash \left( g_1 f_2 \vee f_1 g_2 \right) \label{lem:rc.v} \\
    h\left( g \backslash f \right) k = hgk \backslash hfk \label{lem:rc.iii} \\
    f \backslash f =0 \label{lem:rc.iv} \\
    g \backslash 0 = g \label{lem:rc.vi}
\end{gather}
    \begin{enumerate}[{\em (i)}]
\item We compute that: 
\begin{align*}
    \overline{e^c} &=~ \overline{1_A \backslash e} \tag*{Def. of $e^c$} \\
    &=~ \overline{1_A} \backslash \overline{e} \tag*{(\ref{lem:rc.i})} \\
    &=~ 1_A \backslash e \tag*{$1_A$ and $e$ rest. idemp.} \\
    &=~ e^c \tag*{Def. of $e^c$}
\end{align*}
So $\overline{e^c} = e^c$, and thus $e^c$ is a restriction idempotent. Taking its complement we get: 
\begin{align*}
    {e^c}^c &=~ 1_A \backslash \left( 1_A \backslash e \right) \tag*{Def. of $e^c$} \\
    &=~ e \tag*{(\ref{lem:rc.i})} 
\end{align*}
So ${e^c}^c=e$, as desired. 
\item We first compute that: 
\begin{align*}
   e \overline{e^c} &=~e  \overline{1_A \backslash e} \tag*{Def. of $e^c$} \\
    &=~ e \backslash (ee) \tag*{(\ref{lem:rc.iii})} \\
    &=~ e \backslash e \tag*{Lemma \ref{lem:rest}.(\ref{lem:rest.idem.R1})} \\
    &=~ 0 \tag*{(\ref{lem:rc.iv})}
\end{align*}
So $ e \overline{e^c} =0$. On the other hand, $e \vee e^c = 1$ is simply \textbf{[RC.2]}. 
\item Since $e$ and $e^\prime$ are both restriction idempotents, we have that $e, e^\prime \leq 1$. Now $e e^\prime =0$ implies that $e \perp e^\prime$, which is \textbf{[RC.1]}, while $e \vee e^\prime = 1$ is \textbf{[RC.2]}. Therefore by the uniqueness of relative complements, we have that $e^\prime = 1 \backslash e$. So $e^\prime = e^c$. 
\item These are the de Morgan laws. We first compute that: 
\begin{align*}
e^c_1 e^c_2 &=~ (1\backslash e_1) (1\backslash e_2) \tag*{Def. of $e^c_i$} \\
&=~ 1 \backslash (e_1 \vee e_2) \tag*{(\ref{lem:rc.v})} \\
&=~ (e_1 \vee e_2)^c \tag*{Def. of $(e_1 \vee e_2)^c$}
\end{align*}
So $e^c_1 e^c_2 = (e_1 \vee e_2)^c$. Then applying (\ref{lem:ecomp.0}), we also get $e^c_1 \vee e^c_2 = (e_1 e_2)^c$. 
\item By (\ref{lem:rc.iv}), $1^c = 0$, and by (\ref{lem:rc.vi}), $0^c =1$. 
\item This is the analogue of \textbf{[R.1]} for the complement of the restriction. So we compute: 
\begin{align*}
    \overline{f}^c f &=~ (1_A \backslash \overline{f}) f \tag*{Def. of $\overline{f}^c$} \\
    &=~ f \backslash (\overline{f}f) \tag*{(\ref{lem:rc.iii})} \\
    &=~ f \backslash f \tag*{\textbf{[R.1]}} \\
    &=~ 0 \tag*{(\ref{lem:rc.iv})} 
\end{align*}
So $\overline{f}^c f = 0$. 
\item This is the analogue of \textbf{[R.2]} for the complement of the restriction. Since by (\ref{lem:ecomp.0}), $\overline{f}^c$ and $\overline{g}^c$ are restriction idempotents, and restriction idempotents commute (Lemma \ref{lem:rest}.(\ref{lem:rest.idem.R2})), we have that $\overline{f}^c \overline{g}^c = \overline{g}^c \overline{f}^c$. 
\item This is the analogue of \textbf{[R.3]} for the complement of the restriction. So we compute: 
\begin{align*}
    \overline{ \overline{g}^c f}^c &=~ (\overline{g}^c \overline{f})^c \tag*{Lemma \ref{lem:rest}.(\ref{lem:rest.idem.R3})} \\
    &=~ {\overline{g}^c}^c \vee \overline{f}^c \tag*{Lemma \ref{lem:ecomp}.(\ref{lem:ecomp.vii})} \\
    &=~  \overline{g} \vee \overline{f}^c \tag*{Lemma \ref{lem:ecomp}.(\ref{lem:ecomp.0})} 
\end{align*}
So $\overline{ \overline{g}^c f}^c = \overline{g} \vee \overline{f}^c$. 
\item This is the analogue of \textbf{[R.4]} for the complement of the restriction. So we compute: 
\begin{align*}
    f \overline{g}^c &=~ \overline{f \overline{g}^c} f \tag*{Lemma \ref{lem:rest}.(\ref{lem:rest.idem.R4})} \\
    &=~ \overline{f (1_B \backslash \overline{g})} f \tag*{Def. of $\overline{g}^c$} \\
    &=~ \overline{ f \backslash f\overline{g} } f \tag*{(\ref{lem:rc.iii})} \\
    &=~ \overline{ f \backslash \overline{fg}f } f \tag*{\textbf{[R.4]}} \\
    &=~ \left( \overline{f} \backslash \overline{\overline{fg}f} \right) f \tag*{Lemma \ref{lem:ecomp}.(\ref{lem:rc.i})} \\
    &=~ \left( \overline{f} \backslash \overline{fg} \overline{f} \right)f \tag*{\textbf{[R.3]}} \\
    &=~ \left( 1_A \backslash \overline{fg} \right) \overline{f} f \tag*{(\ref{lem:rc.iii})} \\
    &=~ \left( 1_A \backslash \overline{fg} \right)  f \tag*{\textbf{[R.1]}} \\
    &=~  \overline{fg}^c f \tag*{Def. of $\overline{fg}^c$} \\
\end{align*}
So $f \overline{g}^c = \overline{fg}^c f$
\end{enumerate}
\end{proof}

Here are the complements of the restriction idempotents in our two main examples:

\begin{example} \normalfont In $\mathsf{PAR}$, for a subset $U \subseteq X$, the complement of its corresponding restriction idempotent $e_U$ is the restriction idempotent associated to the complement of $U$, $U^c = \lbrace x \in X \vert x \notin X \rbrace \subseteq X$, so $e^c_U = e_{U^c}$. As such we have that: 
\[ e^c_U(x) = \begin{cases} x & \text{if } x \notin U \\
\uparrow & \text{if } x \in U \end{cases} \]
So in particular, for a partial function $f: X \to Y$, the complement of its restriction $\overline{f}$ is given by:
\[ \overline{f}^c(x) = \begin{cases} x & \text{if } f(x) \uparrow \\
\uparrow & \text{if } f(x) \downarrow \end{cases} \]
\end{example}

\begin{example} \normalfont In $k\text{-}\mathsf{CALG}_\bullet$, for an idempotent element $u \in A$, the complement of its associated corestriction idempotent $e_u$ is the corestriction idempotent associated to the idempotent $1-u$, that is, $e^c_u = e_{1-u}$ and so: 
\[ e^c_u(a) = a -ua  \]
So for a non-unital $k$-algebra morphism $f: A \to B$, the complement of its corestriction $\overline{f}$ is given by: 
\[ \overline{f}^c(b) = b - f(1)b \]
\end{example}

\section{From Classical Products to Restriction Products and Coproducts}\label{sec:split}

In this section, we will show how classical products are related to restriction products and restriction coproducts via restriction idempotent splitting. Furthermore, using split restriction idempotents, we will also be able to show that in a classical restriction category, if the product exists, then it must always be of the form $A \& B = A \oplus B \oplus (A \times B)$. 

\begin{definition}  In a restriction category $\mathbb{X}$, a restriction idempotent $e: A \to A$ is said to be \textbf{split} \cite[Sec 2.3.3]{cockett2002restriction} if there exists maps $r: A \to X$ and $s: X \to A$ such that $rs =e$ and $sr=1_X$. 
\end{definition}

Suppose we are in a restriction category with products. By Lemma \ref{lem:rest}.(\ref{lem:rest.idem.R2}), we have that ${\overline{p_0}~\overline{p_1}: A \& B \to A \& B}$ is a restriction idempotent. The splitting of this restriction idempotent relates the product $A \& B$ to the restriction product $A \times B$. 

\begin{lemma} \label{lem:prod-restprod} Let $\mathbb{X}$ be a restriction category with binary products. Then $\mathbb{X}$ has binary restriction products if and only if for every pair of objects $A$ and $B$, the restriction idempotent $\overline{p_0}~\overline{p_1}: A \& B \to A \& B$ splits. Explicitly: 
\begin{enumerate}[{\em (i)}]
\item If the restriction product $A \times B$ exists, then $\overline{p_0}~\overline{p_1}$ splits via $\langle p_0, p_1 \rangle: A \& B \to A \times B$ and $\left\llangle \pi_0, \pi_1 \right\rrangle: A \times B \to A\&B$. 
\item If $\overline{p_0}~\overline{p_1}$ splits via $r: A \& B \to A \times B$ and $s: A \times B \to A \& B$, for some object $A \times B$, then $A \times B$ is a restriction product of $A$ and $B$ with projections $\pi_0: A \times B \to A$ and $\pi_1: A \times B \to B$ defined as $\pi_0 = sp_0$ and $\pi_1 = sp_1$. 
\end{enumerate}
\end{lemma}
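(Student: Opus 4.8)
The plan is to prove the biconditional by establishing the two explicit constructions (i) and (ii), since "$\mathbb{X}$ has binary restriction products" means precisely that every $A \times B$ exists. Throughout I would use the universal property of the categorical product $\&$ (a map into $A \& B$ is determined by its composites with $p_0$ and $p_1$) together with Lemma~\ref{lem:rest-product}. Two preliminary facts will be used repeatedly: first, a section is monic and hence total by Lemma~\ref{lem:rest}.(\ref{lem:rest.total}), so whenever $sr = 1$ the map $s$ is total; and second, the two splitting equations $rs = \overline{p_0}\,\overline{p_1}$ and $sr = 1$ combine to yield $s\,\overline{p_0}\,\overline{p_1} = (sr)s = s$.

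For direction (i) I set $r = \langle p_0, p_1 \rangle$ and $s = \llangle \pi_0, \pi_1 \rrangle$. To verify $sr = 1_{A \times B}$ I would compute the composites $sr\pi_0$ and $sr\pi_1$: using $\langle p_0, p_1 \rangle \pi_0 = \overline{p_1}p_0$ from (\ref{diag:pair1}), then $\llangle \pi_0, \pi_1 \rrangle \overline{p_1} = \llangle \pi_0, \pi_1 \rrangle$ (Lemma~\ref{lem:rest-product}.(\ref{lem:rest-product.ii}), since $\pi_1$ is total), and finally (\ref{diag:pair2}), I obtain $sr\pi_0 = \pi_0$, and symmetrically $sr\pi_1 = \pi_1$; totality of the $\pi_i$ then lets the uniqueness clause of the restriction product force $sr = \langle \pi_0, \pi_1 \rangle = 1$. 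For $rs = \overline{p_0}\,\overline{p_1}$ I would instead compose with the categorical projections: on one hand $rs\,p_0 = \langle p_0, p_1 \rangle \pi_0 = \overline{p_1}p_0$, while on the other, since restriction idempotents commute and $\overline{p_0}p_0 = p_0$ by \textbf{[R.1]}, also $\overline{p_0}\,\overline{p_1}p_0 = \overline{p_1}p_0$; the symmetric computation handles $p_1$, and the universal property of $\&$ concludes $rs = \overline{p_0}\,\overline{p_1}$.

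For direction (ii) the natural candidate is to define the restriction pairing by $\langle f, g \rangle := \llangle f, g \rrangle r$. The step I expect to be the main obstacle is showing that the proposed projections $\pi_i = sp_i$ are total, since this is the one place that genuinely uses the splitting rather than a formal universal property. Here I would apply \textbf{[R.4]} to get $s\,\overline{p_i} = \overline{sp_i}\,s = \overline{\pi_i}\,s$; chaining these two identities gives $\overline{\pi_0}\,\overline{\pi_1}\,s = s\,\overline{p_0}\,\overline{p_1} = s$, and post-composing with $r$ produces $\overline{\pi_0}\,\overline{\pi_1} = sr = 1$. From this, $\overline{\pi_0} = \overline{\pi_0}(\overline{\pi_0}\,\overline{\pi_1}) = \overline{\pi_0}\,\overline{\pi_1} = 1$, and likewise $\overline{\pi_1} = 1$, so both projections are total.

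With totality established, the defining equations follow quickly: using $rs = \overline{p_0}\,\overline{p_1}$ and Lemma~\ref{lem:rest-product}.(\ref{lem:rest-product.iii}), I compute $\langle f, g \rangle \pi_0 = \llangle f, g \rrangle (rs) p_0 = \llangle \overline{g}f, \overline{f}g \rrangle p_0 = \overline{g}f$, and symmetrically $\langle f, g \rangle \pi_1 = \overline{f}g$. For uniqueness, suppose $h$ satisfies $h\pi_0 = \overline{g}f$ and $h\pi_1 = \overline{f}g$. Then $hs$ has composites $hsp_0 = \overline{g}f$ and $hsp_1 = \overline{f}g$ with the categorical projections, so the universal property of $\&$ gives $hs = \llangle \overline{g}f, \overline{f}g \rrangle$, whence $h = h(sr) = (hs)r = \llangle \overline{g}f, \overline{f}g \rrangle r$. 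Finally, identifying this with $\langle f, g \rangle = \llangle f, g \rrangle r$ follows from $\llangle \overline{g}f, \overline{f}g \rrangle = \llangle f, g \rrangle\,\overline{p_0}\,\overline{p_1} = \llangle f, g \rrangle rs$ together with $sr = 1$, completing the argument.
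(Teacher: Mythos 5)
Your proposal is correct and takes essentially the same approach as the paper: the same splitting maps $\langle p_0,p_1\rangle$ and $\llangle \pi_0,\pi_1\rrangle$ in (i), the same definition $\langle f,g\rangle := \llangle f,g\rrangle r$ in (ii), and the same uniqueness argument via $h = h(sr) = \llangle \overline{g}f, \overline{f}g\rrangle r$. The only cosmetic divergence is the totality check for $\pi_i = sp_i$: you use \textbf{[R.4]} to get $\overline{\pi_0}\,\overline{\pi_1}\,s = s$ and then cancel by post-composing with $r$, whereas the paper derives $s\overline{p_i} = s$ from the splitting and uses that $s$, being a section, is total; both verifications are valid.
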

\begin{proof} First recall some useful basic identities about the restriction product \cite[Prop 2.8]{cockett2012differential}:
\begin{gather}
    k\langle f,g \rangle = \langle kf, kg \rangle \label{eq:rest-prod1} \\
    \langle \pi_0, \pi_1 \rangle = 1_{A \times B}  \label{eq:rest-prod2}
\end{gather}
\begin{enumerate}[{\em (i)}]
\item Suppose that $A \times B$ is a restriction product of $A$ and $B$. We must show that $\langle p_0, p_1 \rangle  \left\llangle \pi_0, \pi_1 \right\rrangle = \overline{p_0}~\overline{p_1}$ and $\left\llangle \pi_0, \pi_1 \right\rrangle \langle p_0, p_1 \rangle = 1_{A \times B}$. So we compute: 
\begin{align*}
    \langle p_0, p_1 \rangle  \left\llangle \pi_0, \pi_1 \right\rrangle &=~ \left\llangle \langle p_0, p_1 \rangle \pi_0, \langle p_0, p_1 \rangle \pi_1 \right\rrangle \tag*{(\ref{eq:pair-comp})} \\ 
    &=~ \left\llangle \overline{p_1} p_0, \overline{p_0} p_1 \right\rrangle \tag*{(\ref{diag:pair1})} \\
&=~ \left\llangle p_0,  p_1 \right\rrangle  \overline{p_0}~\overline{p_1}  \tag*{Lemma \ref{lem:rest-product}.(\ref{lem:rest-product.iii})} \\
&=~ \overline{p_0}~\overline{p_1} \tag*{$\left\llangle p_0,  p_1 \right\rrangle = 1_{A \& B}$}
\end{align*}
\begin{align*}
    \left\llangle \pi_0, \pi_1 \right\rrangle \langle p_0, p_1 \rangle &=~ \left \langle   \left\llangle \pi_0, \pi_1 \right\rrangle p_0,  \left\llangle \pi_0, \pi_1 \right\rrangle p_1 \right \rangle \tag*{(\ref{eq:rest-prod1})} \\ 
    &=~ \left \langle   \pi_0,  \pi_1 \right \rangle \tag*{(\ref{diag:pair2})} \\ 
    &=~ 1_{A \times B} \tag*{(\ref{eq:rest-prod2})}
\end{align*}
So we conclude that $\overline{p_0}~\overline{p_1}$ splits. 

\item Suppose that $\overline{p_0}~\overline{p_1}$ splits via ${r: A\&B \to A\times B}$ and $s: A\times B \to A\& B$, so: 
\begin{align}\label{ppsplit}
    rs = \overline{p_0}~\overline{p_1} && sr = 1_{A \times B}
\end{align}
We must first show that the suggested projections are total. First note that from (\ref{ppsplit}) it follows that:
\begin{align}\label{eq:spi}
    s \overline{p_i}=s
\end{align}
Furthermore, since $s$ is a section, $s$ is monic, and thus $s$ is total (Lemma \ref{lem:rest}.(\ref{lem:rest.total})). So we compute:
\begin{align*}
 \overline{\pi_i} &=~ \overline{sp_i} \tag*{Def. of $\pi_i$} \\ 
 &=~ \overline{s \overline{p_i}} \tag*{Lemma \ref{lem:rest}.(\ref{lem:rest.i})} \\
 &=~ \overline{s} \tag*{(\ref{eq:spi})} \\
 &=~ 1_{A \times B} \tag*{$s$ total}
\end{align*}
So $\pi_0$ and $\pi_1$ are total as needed. Now given a pair of maps $f: C \to A$ and $g: C \to B$, define $\langle f,g \rangle: C \to A \times B$ as the following composite: 
\begin{align}
  \langle f, g \rangle =  \left\llangle f,g\right\rrangle r  
\end{align}
Then we compute: 
\begin{align*}
     \langle f, g \rangle \pi_0 &=~  \left\llangle f,g\right\rrangle r s p_0 \tag*{Def. of $\langle -,-\rangle$} \\
&=~ \left\llangle f,g\right\rrangle  \overline{p_0}~\overline{p_1} p_0 \tag{\ref{ppsplit}} \\
&=~ \left\llangle \overline{g}f, \overline{f}g\right\rrangle p_0 \tag*{Lemma \ref{lem:rest-product}.(\ref{lem:rest-product.iii})} \\
&=~ \overline{g}f \tag*{(\ref{diag:pair2})}
\end{align*}
So $\langle f, g \rangle \pi_0 = \overline{g}f$, and similarly we can compute that $\langle f, g \rangle \pi_1 = \overline{f}g$. Lastly for uniqueness, suppose that there was another map $h$ such that $h \pi_0 = \overline{f}g$ and $h \pi_1= \overline{g}f$. Then we compute:
\begin{align*}
    h &=~ h sr \tag{\ref{ppsplit}} \\
    &=~ h s \left \llangle p_0, p_1 \right\rrangle r  \tag*{$\left\llangle p_0,  p_1 \right\rrangle = 1_{A \& B}$} \\
    &=~ \left \llangle hs p_0, hs p_1 \right\rrangle r  \tag*{(\ref{eq:pair-comp})} \\ 
    &=~ \left \llangle h\pi_0, h \pi_1 \right\rrangle r \tag*{Def. of $\pi_i$} \\
    &=~ \left \llangle \overline{g}f, \overline{g}f \right\rrangle r \tag*{Assumption on $h$} \\
    &=~  \left\llangle f,g\right\rrangle \overline{p_0}~\overline{p_1} r \tag*{Lemma \ref{lem:rest-product}.(\ref{lem:rest-product.iii})} \\
    &=~  \left\llangle f,g\right\rrangle r \tag{\ref{ppsplit}} \\
    &=~ \langle f, g \rangle \tag*{Def. of $\langle -,-\rangle$} 
\end{align*}
So $\langle f, g \rangle$ is unique. Thus $A \times B$ is the restriction product of $A$ and $B$. 
\end{enumerate}
\end{proof}

To relate the product to the restriction coproduct, we will need to work in a classical restriction category. By Lemma \ref{lem:ecomp}.({\ref{lem:ecomp.0}}) and Lemma \ref{lem:join}.(\ref{lem:join.vi}), $\overline{p_0}^c \vee \overline{p_1}^c: A \& B \to A \& B$ is a restriction idempotent. The splitting of this restriction idempotent relates the product $A \& B$ to the restriction coproduct $A \oplus B$. Before showing this, here are useful identities we will need for the proof. 

\begin{lemma} \label{lem:class-prod} In a classical restriction category $\mathbb{X}$ with binary products, 
\begin{enumerate}[{\em (i)}]
\item \label{lem:class-prod.iii} $\llangle f,g \rrangle \overline{p_0}^c = \llangle 0, \overline{f}^c g \rrangle$ and $\llangle f,g \rrangle \overline{p_1}^c = \llangle \overline{g}^c f, 0 \rrangle$;
\item \label{lem:class-prod.iv} $\llangle f,g \rrangle \overline{p_0}^c p_1 = \overline{f}^c g$ and $\llangle f,g \rrangle \overline{p_1}^c p_0 = \overline{g}^c f$;
\item \label{lem:class-prod.0} $\llangle 0,0 \rrangle =0$;
\item \label{lem:class-prod.i} $\overline{p_0}^c \perp \overline{p_1}^c$, or in other words, $\overline{p_0}^c ~\overline{p_1}^c =0$; 
\item \label{lem:class-prod.vi} $\overline{p_0}^c \vee \overline{p_1}^c = \llangle \overline{p_1}^c p_0, \overline{p_0}^c p_1 \rrangle$;
\item \label{lem:class-prod.ii} $\left\llangle \overline{p_1}^c p_0, 0 \right\rrangle \perp \left\llangle 0, \overline{p_0}^c p_1 \right\rrangle$ and $\left\llangle \overline{p_1}^c p_0, 0 \right\rrangle  \vee  \left\llangle 0, \overline{p_0}^c p_1 \right\rrangle = \overline{p_0}^c \vee \overline{p_1}^c$;
\item \label{lem:class-prod.v}  $\llangle 1_A, 0 \rrangle \overline{p_0}^c=0$, $\llangle 1_A, 0 \rrangle \overline{p_1}^c = \llangle 1_A, 0 \rrangle$, $\llangle 0, 1_B \rrangle \overline{p_0}^c=\llangle 0, 1_B \rrangle$, and $\llangle 0, 1_B \rrangle \overline{p_1}^c = 0$;
\item \label{lem:class-prod.vii} $\overline{p_0}^c$ is a split via the maps $\overline{p_0}^c p_1: A \& B \to B$ and ${\llangle 0,1_B \rrangle: B \to A \& B}$, and $\overline{p_1}^c$ is a split via the maps $\overline{p_1}^c p_0: A\&B \to A$ and $\llangle 1_A, 0  \rrangle: A \to A \& B$. 
\end{enumerate}
\end{lemma}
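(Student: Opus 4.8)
The whole lemma is driven by its first identity (i), which is a \emph{complement} analogue of Lemma \ref{lem:rest-product}.(\ref{lem:rest-product.ii}); once (i) is established, the remaining parts follow by instantiating it at convenient maps, by post-composing with the classical projections, and by a single appeal to the universal property of $\&$. Part (iii) requires no work at all, being literally Lemma \ref{lem:cprod}.(\ref{lem:cprod.vi}). So the plan is to prove (i), deduce (ii) immediately, and then let (iv)--(viii) cascade from (i).

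To prove (i), I would slide the complemented restriction idempotent to the front using the complement analogue of \textbf{[R.4]}, namely Lemma \ref{lem:ecomp}.(\ref{lem:ecomp.vi}): since $\llangle f,g\rrangle p_0 = f$ by (\ref{diag:pair2}), this gives $\llangle f,g\rrangle\,\overline{p_0}^c = \overline{\llangle f,g\rrangle p_0}^c\,\llangle f,g\rrangle = \overline{f}^c\,\llangle f,g\rrangle$. Then the compatibility of the classical pairing with composition (\ref{eq:pair-comp}) distributes $\overline{f}^c$ through the pairing to give $\llangle \overline{f}^c f,\,\overline{f}^c g\rrangle$, and the complement analogue of \textbf{[R.1]}, Lemma \ref{lem:ecomp}.(\ref{lem:ecomp.i}), collapses the first slot via $\overline{f}^c f = 0$, yielding $\llangle 0,\overline{f}^c g\rrangle$; the symmetric identity is identical with the two slots interchanged. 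Part (ii) is then immediate upon post-composing (i) with $p_1$ and using (\ref{diag:pair2}).

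The cascade begins by instantiating (i) at the identity $1_{A\&B} = \llangle p_0,p_1\rrangle$, which yields the two key identifications $\overline{p_0}^c = \llangle 0,\overline{p_0}^c p_1\rrangle$ and $\overline{p_1}^c = \llangle \overline{p_1}^c p_0,0\rrangle$. For (iv) I would then apply (i) a second time to $\overline{p_0}^c\,\overline{p_1}^c = \llangle 0,\overline{p_0}^c p_1\rrangle\,\overline{p_1}^c$: since the first slot is already $0$, the complement-pairing formula lands in $\llangle 0,0\rrangle$, which is $0$ by (iii). For (v), both sides are maps into the product $A\&B$, so it suffices to check equality after post-composing with $p_0$ and with $p_1$: using that joins are preserved by post-composition (Lemma \ref{lem:join}.(\ref{lem:join.iii})) together with $\overline{p_i}^c p_i = 0$ (Lemma \ref{lem:ecomp}.(\ref{lem:ecomp.i})), each projection of the join reduces to the matching component $\overline{p_1}^c p_0$ or $\overline{p_0}^c p_1$, which agrees with the corresponding projection of $\llangle \overline{p_1}^c p_0,\overline{p_0}^c p_1\rrangle$ by (\ref{diag:pair2}); uniqueness in the universal property of $\&$ then forces the two maps to coincide. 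Part (vi) is just (iv) and (v) read through the identifications above, where the two single-slot pairings are exactly $\overline{p_1}^c$ and $\overline{p_0}^c$.

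For (vii) I would instantiate (i) at $\llangle 1_A,0\rrangle$ and at $\llangle 0,1_B\rrangle$, using $\overline{1_A}^c = 1^c = 0$ and $\overline{0}^c = 0^c = 1$ from Lemma \ref{lem:ecomp}.(\ref{lem:ecomp.viii}); each of the four equalities drops out after simplifying one slot to $0$ or to the relevant identity. Finally, for the splittings in (viii), the equation $sr = 1$ follows from (vii) and (\ref{diag:pair2}) (e.g.\ $\llangle 0,1_B\rrangle\,\overline{p_0}^c p_1 = \llangle 0,1_B\rrangle p_1 = 1_B$), while $rs = \overline{p_0}^c$ follows from the identification $\overline{p_0}^c = \llangle 0,\overline{p_0}^c p_1\rrangle$ together with Lemma \ref{lem:cprod}.(\ref{lem:cprod.iv}), which rewrites $\llangle 0,\overline{p_0}^c p_1\rrangle$ as $\overline{p_0}^c p_1\,\llangle 0,1_B\rrangle$; the case of $\overline{p_1}^c$ is symmetric. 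The only genuinely non-formal step is (v): one must know the join exists (it does, since restriction idempotents are always pairwise compatible by Lemma \ref{lem:join}.(\ref{lem:join.vi})) and then lean on the product's universal property rather than on any direct manipulation of the join, while everything else is bookkeeping organized around identity (i).
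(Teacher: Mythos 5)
Your overall strategy is sound, and for the heart of the lemma it coincides with the paper's proof: you prove (i) exactly as the paper does (Lemma \ref{lem:ecomp}.(\ref{lem:ecomp.vi}) to pull the complemented idempotent out front as $\overline{f}^c$, then (\ref{eq:pair-comp}) and Lemma \ref{lem:ecomp}.(\ref{lem:ecomp.i})), and your (ii), (v), (vii), and the $sr = 1_B$ half of (viii) are essentially the paper's arguments verbatim. Your organization of the rest is a genuine, and rather clean, streamlining: instantiating (i) at $1_{A \& B} = \llangle p_0, p_1 \rrangle$ gives the identifications $\overline{p_0}^c = \llangle 0, \overline{p_0}^c p_1 \rrangle$ and $\overline{p_1}^c = \llangle \overline{p_1}^c p_0, 0 \rrangle$, from which (iv) and (vi) fall out with one more application of (i), whereas the paper re-runs a separate universal-property computation for each of (iv), (vi), and the $rs = \overline{p_0}^c$ half of (viii).

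There is, however, one genuine flaw: your two appeals to Lemma \ref{lem:cprod} are illegitimate in this setting. You cite Lemma \ref{lem:cprod}.(\ref{lem:cprod.vi}) for (iii), and Lemma \ref{lem:cprod}.(\ref{lem:cprod.iv}) for the rewriting $\llangle 0, \overline{p_0}^c p_1 \rrangle = \overline{p_0}^c p_1 \llangle 0, 1_B \rrangle$ in (viii). But Lemma \ref{lem:cprod} is proved under the hypothesis of a \emph{distributive restriction category with classical products}: it presupposes restriction coproducts, distributivity, and that $A \& B$ is literally $A \oplus B \oplus (A \times B)$ with projections defined as copairings. None of that is available in the present lemma, where $\mathbb{X}$ is a classical restriction category with an abstract binary product; indeed, the whole point of Section \ref{sec:split} is to \emph{construct} the coproduct decomposition of $A \& B$ from this lemma, and Lemma \ref{lem:cprod}.(\ref{lem:cprod.iv}) does not even parse here since no injections $\iota_j$ exist. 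Both steps are easily repaired without Lemma \ref{lem:cprod}: for (iii), $0 p_0 = 0 = 0 p_1$ by annihilation, so uniqueness in the universal property of $\&$ forces $\llangle 0, 0 \rrangle = 0$ (this is the paper's argument); and for (viii), the rewriting you want is just (\ref{eq:pair-comp}): $\overline{p_0}^c p_1 \llangle 0, 1_B \rrangle = \llangle \overline{p_0}^c p_1 0, \overline{p_0}^c p_1 1_B \rrangle = \llangle 0, \overline{p_0}^c p_1 \rrangle$. With those two substitutions your proof is complete and correct.
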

\begin{proof}
    \begin{enumerate}[{\em (i)}]
    \item We compute: 
\begin{align*}
    \llangle f,g \rrangle \overline{p_0}^c &=~ \overline{\llangle f,g \rrangle  p_0}^c \llangle f,g \rrangle \tag*{Lemma \ref{lem:ecomp}.(\ref{lem:ecomp.vi})} \\
    &=~ \overline{f}^c \llangle f,g \rrangle \tag*{(\ref{diag:pair2})} \\
    &=~ \llangle \overline{f}^c f, \overline{f}^c g \rrangle \tag*{(\ref{eq:pair-comp})} \\
    &=~ \llangle 0, \overline{f}^c g \rrangle \tag*{Lemma \ref{lem:ecomp}.(\ref{lem:ecomp.i})} 
\end{align*}
So $\llangle f,g \rrangle \overline{p_0}^c = \llangle 0, \overline{f}^c g \rrangle$, and similarly we can show that $\llangle f,g \rrangle \overline{p_1}^c = \llangle \overline{g}^c f, 0 \rrangle$.
\item This follows immediatly from (\ref{lem:class-prod.iii}). 
\item Postcomposing zero by the projections results in zero, $0 p_i =0$. So by the universal property of the product, it follows that $\llangle 0,0 \rrangle =0$.
\item By Lemma \ref{lem:ecomp}.(\ref{lem:ecomp.i}), postcomposing $\overline{p_0}^c \overline{p_1}^c$ by the projections gives ${\overline{p_0}^c \overline{p_1}^c p_i = 0}$. Thus by the universal property of the product, it follows that $\overline{p_0}^c \overline{p_1}^c = \llangle 0, 0 \rrangle$. Thus by (\ref{lem:class-prod.0}), $\overline{p_0}^c \overline{p_1}^c =0$, and so $\overline{p_0}^c \perp \overline{p_1}^c$. 
\item We compute: 
\begin{align*}
    \left( \overline{p_0}^c \vee\overline{p_1}^c \right) p_0 &=~ \overline{p_0}^c p_0 \vee \overline{p_1}^c p_0  \tag*{Lemma \ref{lem:join}.(\ref{lem:join.iii})}  \\
    &=~ 0 \vee \overline{p_1}^c p_0  \tag*{Lemma \ref{lem:ecomp}.(\ref{lem:ecomp.i})} \\
    &=~ \overline{p_1}^c p_0  \tag*{Lemma \ref{lem:join}.(\ref{lem:join.ii})} 
\end{align*}
So $\left( \overline{p_0}^c \vee\overline{p_1}^c \right) p_0 = \overline{p_1}^c p_0$, and similarly we can show that $\left( \overline{p_0}^c \vee\overline{p_1}^c \right) p_1 = \overline{p_0}^c p_1$. So by the universal property of the product, it follows that $\overline{p_0}^c \vee \overline{p_1}^c = \llangle \overline{p_1}^c p_0, \overline{p_0}^c p_1 \rrangle$. 
\item To show these maps are disjoint, we compute the following: 
\begin{align*}
    \overline{\left\llangle \overline{p_1}^c p_0, 0 \right\rrangle} \left\llangle 0, \overline{p_0}^c p_1 \right\rrangle p_0 &=~  \overline{\left\llangle \overline{p_1}^c p_0, 0 \right\rrangle} 0 \tag*{(\ref{diag:pair2})} \\
    &=~ 0 
\end{align*}
\begin{align*}
    \overline{\left\llangle \overline{p_1}^c p_0, 0 \right\rrangle} \left\llangle 0, \overline{p_0}^c p_1 \right\rrangle p_1 &=~  \overline{\left\llangle \overline{p_1}^c p_0, 0 \right\rrangle} \overline{p_0}^c p_1 \tag*{(\ref{diag:pair2})} \\
    &=~ \overline{\left\llangle \overline{p_1}^c p_0, 0 \right\rrangle} \overline{p_0}^c \overline{p_1} p_1 \tag*{\textbf{[R.1]}} \\
    &=~ \overline{\left\llangle \overline{p_1}^c p_0, 0 \right\rrangle} \overline{p_1} \overline{p_0}^c  p_1 \tag*{Lemma \ref{lem:rest}.(\ref{lem:rest.idem.R2})} \\
    &=~ \overline{\left\llangle \overline{p_1}^c p_0, 0 \right\rrangle p_1}   \overline{p_0}^c  p_1 \tag*{\textbf{[R.3]}} \\
&=~ \overline{0}   \overline{p_0}^c  p_1 \tag*{(\ref{diag:pair2})} \\
&=~ 0 \tag*{Rest. zero}
\end{align*}
So by universal property of the product, it follows that $\overline{\left\llangle \overline{p_1}^c p_0, 0 \right\rrangle} \left\llangle 0, \overline{p_0}^c p_1 \right\rrangle = \llangle 0,0 \rrangle$. By (\ref{lem:class-prod.0}), this means that $\overline{\left\llangle \overline{p_1}^c p_0, 0 \right\rrangle} \left\llangle 0, \overline{p_0}^c p_1 \right\rrangle = 0$. Thus $\left\llangle \overline{p_1}^c p_0, 0 \right\rrangle \perp \left\llangle 0, \overline{p_0}^c p_1 \right\rrangle$ as desired. So we can take their join and compute: 
\begin{align*}
    \left( \left\llangle \overline{p_1}^c p_0, 0 \right\rrangle \vee \left\llangle 0, \overline{p_0}^c p_1 \right\rrangle \right) p_0 &=~ \left\llangle \overline{p_1}^c p_0, 0 \right\rrangle p_0 \vee \left\llangle 0, \overline{p_0}^c p_1 \right\rrangle p_0 \tag*{Lemma \ref{lem:join}.(\ref{lem:join.iii})}  \\
    &=~ \overline{p_1}^c p_0 p_0 \vee 0  \tag*{(\ref{diag:pair2})} \\
    &=~ \overline{p_1}^c p_0  \tag*{Lemma \ref{lem:join}.(\ref{lem:join.ii})} 
\end{align*}
So we have the equality $\left( \left\llangle \overline{p_1}^c p_0, 0 \right\rrangle \vee \left\llangle 0, \overline{p_0}^c p_1 \right\rrangle \right) p_0= \overline{p_1}^c p_0$ holds, and similarly we can also compute that $\left( \left\llangle \overline{p_1}^c p_0, 0 \right\rrangle \vee \left\llangle 0, \overline{p_0}^c p_1 \right\rrangle \right) p_1= \overline{p_0}^c p_1$. Thus by the universal property of the product, it follows that $\left\llangle \overline{p_1}^c p_0, 0 \right\rrangle \vee \left\llangle 0, \overline{p_0}^c p_1 \right\rrangle = \llangle \overline{p_1}^c p_0, \overline{p_0}^c p_1 \rrangle$. By (\ref{lem:class-prod.vi}), it follows that $\left\llangle \overline{p_1}^c p_0, 0 \right\rrangle  \vee  \left\llangle 0, \overline{p_0}^c p_1 \right\rrangle = \overline{p_0}^c \vee \overline{p_1}^c$ as desired. 
\item These follow from (\ref{lem:class-prod.iii}) and Lemma \ref{lem:ecomp}.(\ref{lem:ecomp.viii}). 
\item First observe that $\overline{p_0}^c p_1 \llangle 0,1_B \rrangle p_0 = 0$ and $\overline{p_0}^c p_1 \llangle 0,1_B \rrangle p_1 = \overline{p_0}^c p_1$. By Lemma \ref{lem:ecomp}.(\ref{lem:ecomp.i}), we also that $\overline{p_0}^c p_0 =0$. Thus by the universal property of the product, it follows that $\overline{p_0}^c p_1 \llangle 0,1_B \rrangle = \overline{p_0}^c$. By Lemma \ref{lem:class-prod}.(\ref{lem:class-prod.v}), we also have that $ \llangle 0,1_B \rrangle \overline{p_0}^c p_1 = 1_B$. Therefore, we have that $\overline{p_0}^c$ splits, and similarly we can also show that $\overline{p_1}^c$ splits. 
\end{enumerate}
\end{proof}

\begin{lemma} \label{lem:prod-restcoprod} Let $\mathbb{X}$ be a classical restriction category with binary products. Then $\mathbb{X}$ has finite restriction coproducts if and only if for every pair of objects $A$ and $B$, the restriction idempotent $\overline{p_0}^c \vee \overline{p_1}^c: A \& B \to A \& B$ splits. Explicitly: 
\begin{enumerate}[{\em (i)}]
\item If the restriction coproduct $A \oplus B$ exists, then the restriction idempotent $\overline{p_0}^c \vee \overline{p_1}^c$ splits via the maps $ \overline{p_1}^c p_0 \iota_0 \vee \overline{p_0}^c p_1 \iota_1: A \& B \to A \oplus B$ and $\overline{p_1}^c p_0 \iota_0 \vee \overline{p_0}^c p_1 \iota_1: A \& B \to A \oplus B$. 
\item If the restriction idempotent $\overline{p_0}^c \vee \overline{p_1}^c$ splits via maps $r: A \& B \to A \oplus B$ and ${s: A \oplus B \to A \& B}$, then $A \oplus B$ is a restriction product of $A$ and $B$ with injections $\iota_0: A \to A\oplus B$ and $\iota_1: B \to A \oplus B$ defined as $\iota_0 = \llangle 1_A, 0 \rrangle r$ and $\iota_1 = \llangle 0, 1_B \rrangle r$.  
\end{enumerate}
\end{lemma}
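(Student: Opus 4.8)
The plan is to mirror the structure of Lemma~\ref{lem:prod-restprod}: prove each direction by exhibiting an explicit splitting and, conversely, reconstructing the restriction coproduct from an abstract one. I would treat only the binary case in detail, since the nullary coproduct is the restriction initial object, i.e. the zero object supplied by the restriction-zero structure (Lemma~\ref{lem:restcoprodzero}.(\ref{lem:restcoprodzero.i})), and $n$-ary coproducts follow by iterating the binary case. Throughout write $e := \overline{p_0}^c \vee \overline{p_1}^c$, which is a restriction idempotent by Lemma~\ref{lem:ecomp}.(\ref{lem:ecomp.0}) and Lemma~\ref{lem:join}.(\ref{lem:join.vi}). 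The two ``halves'' of $e$ come from Lemma~\ref{lem:class-prod}.(\ref{lem:class-prod.vii}): $\overline{p_1}^c$ splits via $\overline{p_1}^c p_0$ and $\llangle 1_A,0\rrangle$, while $\overline{p_0}^c$ splits via $\overline{p_0}^c p_1$ and $\llangle 0,1_B\rrangle$. These splittings, together with Lemma~\ref{lem:class-prod}.(\ref{lem:class-prod.v}) and the disjointness $\overline{p_0}^c\,\overline{p_1}^c=0$ of Lemma~\ref{lem:class-prod}.(\ref{lem:class-prod.i}), are the workhorses of both directions.

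For direction (i), assuming the restriction coproduct $A\oplus B$ exists, I would take
\[
 r := \overline{p_1}^c p_0 \iota_0 \vee \overline{p_0}^c p_1 \iota_1 \colon A\&B \to A\oplus B, \qquad s := [\llangle 1_A,0\rrangle, \llangle 0,1_B\rrangle]\colon A\oplus B \to A\&B,
\]
where $s$ is the copairing into the product. The join defining $r$ exists because the two summands are disjoint: restriction idempotents commute and $\overline{p_1}^c\,\overline{p_0}^c=0$. Then $sr=1_{A\oplus B}$ follows from the couniversal property of $A\oplus B$, computing $\iota_0 s r = \llangle 1_A,0\rrangle r = \iota_0$ via [J.3] and Lemma~\ref{lem:class-prod}.(\ref{lem:class-prod.v}), and symmetrically for $\iota_1$; while $rs=e$ follows by post-composing the join (Lemma~\ref{lem:join}.(\ref{lem:join.iii})) and applying Lemma~\ref{lem:class-prod}.(\ref{lem:class-prod.vii}), giving $rs=\overline{p_1}^c \vee \overline{p_0}^c = e$.

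For direction (ii), assume $e$ splits via $r\colon A\&B\to A\oplus B$ and $s\colon A\oplus B\to A\&B$, and set $\iota_0=\llangle 1_A,0\rrangle r$, $\iota_1=\llangle 0,1_B\rrangle r$. First I would record that $s$ is total (a section is monic, hence total by Lemma~\ref{lem:rest}.(\ref{lem:rest.total})), whence $\overline{r}=\overline{rs}=e$ using $\overline{s}=1$ and Lemma~\ref{lem:rest}.(\ref{lem:rest.i}). Since $\llangle 1_A,0\rrangle$ is total (a quick [R.4]-computation from $\llangle 1_A,0\rrangle p_0 = 1_A$) and $\llangle 1_A,0\rrangle e = \llangle 1_A,0\rrangle$ by Lemma~\ref{lem:class-prod}.(\ref{lem:class-prod.v}), totality of $\iota_0$ (and dually $\iota_1$) follows from $\overline{\iota_0}=\overline{\llangle 1_A,0\rrangle\,\overline{r}}=\overline{\llangle 1_A,0\rrangle e}=\overline{\llangle 1_A,0\rrangle}=1_A$. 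For the universal property, given $f\colon A\to C$ and $g\colon B\to C$ I would define
\[
 [f,g] := s\bigl(\overline{p_1}^c p_0 f \vee \overline{p_0}^c p_1 g\bigr),
\]
the join existing since $\overline{p_1}^c\,\overline{p_0}^c=0$ forces its summands disjoint. The identities $\iota_0[f,g]=f$ and $\iota_1[f,g]=g$ then reduce, via $rs=e$, [J.3], and Lemma~\ref{lem:class-prod}.(\ref{lem:class-prod.v}), to $\llangle 1_A,0\rrangle p_0 f = f$ and its dual.

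The main obstacle, and the crux of the argument, is uniqueness of the copairing, since no coproduct universal property is yet available. The key is to decompose the retraction: from $rsr=r$ and $rs = e = \overline{p_1}^c p_0\llangle 1_A,0\rrangle \vee \overline{p_0}^c p_1\llangle 0,1_B\rrangle$ (Lemma~\ref{lem:class-prod}.(\ref{lem:class-prod.ii}) rewritten through the split halves), post-composing the join by $r$ yields the closed form $r = \overline{p_1}^c p_0 \iota_0 \vee \overline{p_0}^c p_1 \iota_1$. Then for any $h$ with $\iota_0 h=f$ and $\iota_1 h=g$, post-composing this formula by $h$ gives $rh = \overline{p_1}^c p_0 f \vee \overline{p_0}^c p_1 g$, so $h = srh = [f,g]$. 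Establishing this retraction formula cleanly, and correctly managing the interaction of joins with the complement idempotents and the classical pairing, is where the real care is needed; everything else is bookkeeping with the cited identities.
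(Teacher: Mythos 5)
Your proposal is correct and takes essentially the same route as the paper's own proof: the same retraction $r = \overline{p_1}^c p_0 \iota_0 \vee \overline{p_0}^c p_1 \iota_1$ (your section $[\llangle 1_A,0\rrangle, \llangle 0,1_B\rrangle]$ coincides with the paper's $\llangle \iota^\circ_0, \iota^\circ_1\rrangle$ by the couniversal property), the same copairing $[f,g] := s\left(\overline{p_1}^c p_0 f \vee \overline{p_0}^c p_1 g\right)$, and the same key identities, namely Lemma \ref{lem:class-prod}.(\ref{lem:class-prod.i}), (\ref{lem:class-prod.ii}), (\ref{lem:class-prod.v}), (\ref{lem:class-prod.vii}) together with \textbf{[J.3]}. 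Your uniqueness argument, which first extracts the closed form of $r$ from $rs = \overline{p_0}^c \vee \overline{p_1}^c$ and $sr = 1_{A \oplus B}$ and then post-composes with $h$, is just a reorganization of the paper's direct computation $h = s(rs)rh = [f,g]$.
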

\begin{proof} 
\begin{enumerate}[{\em (i)}]
\item Suppose that $A \oplus B$ is the restriction coproduct of $A$ and $B$. We first check that $\overline{p_1}^c p_0 \iota_0$ and $\overline{p_0}^c p_1 \iota_1$ are indeed disjoint:
\begin{align*}
    \overline{\overline{p_1}^c p_0 \iota_0} \overline{p_0}^c p_1 \iota_1 &=~ \overline{\overline{p_1}^c p_0} \overline{p_0}^c p_1 \iota_1 \tag*{Lemma \ref{lem:rest}.(\ref{lem:rest.total.2}) and $\iota_0$ total}  \\
    &=~ \overline{p_1}^c  \overline{p_0}  \overline{p_0}^c p_1 \iota_1 \tag*{Lemma \ref{lem:rest}.(\ref{lem:rest.idem.R3}) and Lemma \ref{lem:ecomp}.(\ref{lem:ecomp.0})}  \\
    &=~ 0 \tag*{Lemma \ref{lem:ecomp}.(\ref{lem:ecomp.ii})}
\end{align*}
So $\overline{p_1}^c p_0 \iota_0 \perp \overline{p_0}^c p_1 \iota_1$, thus we can indeed take their join $\overline{p_1}^c p_0 \iota_0 \vee \overline{p_0}^c p_1 \iota_1$. Now we must show both equalities that $\left( \overline{p_1}^c p_0 \iota_0 \vee \overline{p_0}^c p_1 \iota_1 \right) \llangle \iota^\circ_0, \iota^\circ_1 \rrangle = \overline{p_0}^c \vee \overline{p_1}^c$ and $\llangle \iota^\circ_0, \iota^\circ_1 \rrangle \left( \overline{p_1}^c p_0 \iota_0 \vee \overline{p_0}^c p_1 \iota_1 \right)  = 1_{A \oplus B}$. So we compute: 
\begin{align*}
   \left( \overline{p_1}^c p_0 \iota_0 \vee \overline{p_0}^c p_1 \iota_1 \right) \llangle \iota^\circ_0, \iota^\circ_1 \rrangle &=~ \overline{p_1}^c p_0 \iota_0\llangle \iota^\circ_0, \iota^\circ_1 \rrangle  \vee \overline{p_0}^c p_1 \iota_1 \llangle \iota^\circ_0, \iota^\circ_1 \rrangle  \tag*{Lemma \ref{lem:join}.(\ref{lem:join.iii})} \\
   &=~   \left\llangle \overline{p_1}^c p_0\iota_0\iota^\circ_0, \overline{p_1}^c p_0\iota_0\iota^\circ_1 \right\rrangle  \vee  \left\llangle \overline{p_0}^c p_1\iota_1 \iota^\circ_0, \overline{p_0}^c p_1\iota_1 \iota^\circ_1 \right\rrangle \tag*{(\ref{eq:pair-comp})} \\
   &=~ \left\llangle \overline{p_1}^c p_0, 0 \right\rrangle  \vee  \left\llangle 0, \overline{p_0}^c p_1 \right\rrangle \tag*{Lemma \ref{lem:restcoprodzero}.(\ref{lem:restcoprodzero.iii})} \\ 
   &=~  \overline{p_0}^c \vee \overline{p_1}^c \tag*{Lemma \ref{lem:class-prod}.(\ref{lem:class-prod.ii})} 
\end{align*}
\begin{align*}
  \llangle \iota^\circ_0, \iota^\circ_1 \rrangle \left( \overline{p_1}^c p_0 \iota_0 \vee \overline{p_0}^c p_1 \iota_1 \right) &=~  \llangle \iota^\circ_0, \iota^\circ_1 \rrangle\overline{p_1}^c p_0 \iota_0 \vee  \llangle \iota^\circ_0, \iota^\circ_1 \rrangle\overline{p_0}^c p_1 \iota_1 \tag*{\textbf{[J.3]}} \\
&=~ \overline{\iota^\circ_1}^c \iota^\circ_0 \iota_0 \vee \overline{\iota^\circ_0}^c \iota^\circ_1 \iota_1 \tag*{Lemma \ref{lem:class-prod}.(\ref{lem:class-prod.iv})} \\
&=~ \overline{\iota^\circ_1}^c  \vee  \overline{\iota^\circ_0}^c \tag*{Lemma \ref{lem:restcoprodzero}.(\ref{lem:restcoprodzero.iii})} \\ 
  &=~ \left(\overline{\iota^\circ_1} ~ \overline{\iota^\circ_0} \right)^c  \tag*{Lemma \ref{lem:ecomp}.(\ref{lem:ecomp.vii})} \\
  &=~ 0^c \tag*{Lemma \ref{lem:restcoprodzero}.(\ref{lem:restcoprodzero.v})} \\ 
  &=~ 1_{A \oplus B} \tag*{Lemma \ref{lem:ecomp}.(\ref{lem:ecomp.viii})} \\
\end{align*}
So we conclude that $\overline{p_0}^c \vee \overline{p_1}^c$ splits. 
\item Suppose that $\overline{p_0}^c \vee \overline{p_1}^c$ splits via $r: A \& B \to A \oplus B$ and $s: A \oplus B \to A \& B$, so: 
\begin{align}\label{ppcsplit}
    rs = \overline{p_0}^c \vee \overline{p_1}^c && sr = 1_{A \oplus B}
\end{align}
We must first show that the suggested injections are total. First note again that since $s$ is a section, $s$ is monic, and thus $s$ is total (Lemma \ref{lem:rest}.(\ref{lem:rest.total})). Therefore it follows from Lemma \ref{lem:rest}.(\ref{lem:rest.total.2}) that:
\begin{align}\label{rrest}
    \overline{r} = \overline{p_0}^c \vee \overline{p_1}^c 
\end{align}
We then compute that: 
\begin{align*}
    \overline{\iota_0} &=~ \overline{\llangle 1_A, 0 \rrangle r} \tag*{Def. of $\iota_0$} \\
    &=~ \overline{\llangle 1_A, 0 \rrangle \overline{r}} \tag*{Lemma \ref{lem:rest}.(\ref{lem:rest.i})} \\ 
    &=~ \overline{\llangle 1_A, 0 \rrangle \left(\overline{p_0}^c \vee \overline{p_1}^c \right)} \tag*{(\ref{rrest})} \\ 
    &=~ \overline{\llangle 1_A, 0 \rrangle \overline{p_0}^c \vee \llangle 1_A, 0 \rrangle \overline{p_1}^c} \tag*{\textbf{[J.3]}} \\
    &=~ \overline{0 \vee \llangle 1_A, 0 \rrangle} \tag*{Lemma \ref{lem:class-prod}.(\ref{lem:class-prod.v})}  \\
    &=~ \overline{\llangle 1_A, 0 \rrangle} \tag*{Lemma \ref{lem:join}.(\ref{lem:join.ii})}  \\
    &=~ \overline{\llangle 1_A, \overline{1_A} 0 \rrangle} \tag*{$1_A$ total} \\
  &=~ \overline{\llangle 1_A, 0 \rrangle \overline{p_0}}  \tag*{Lemma \ref{lem:rest-product}.(\ref{lem:rest-product.ii})} \\
    &=~ \overline{\llangle 1_A, 0 \rrangle p_0} \tag*{Lemma \ref{lem:rest}.(\ref{lem:rest.i})} \\
    &=~ \overline{1_A} \tag*{(\ref{diag:pair2})} \\
    &=~ 1_A \tag*{$1_A$ total}
\end{align*} 
So $\overline{\iota_0}= 1_A$, and similarly we can show that $\overline{\iota_1}= 1_B$. So the injection maps are indeed total. Now for a pair of maps $f: A \to C$ and $g: B \to C$, consider the composites $\overline{p_1}^c p_0 f$ and $\overline{p_0}^c p_1 g$. We show these are disjoint: 
\begin{align*}
   \overline{ \overline{p_1}^c p_0 f } \overline{p_0}^c p_1 g &=~ \overline{p_0 f}  \overline{p_1}^c \overline{p_0}^c p_1 g \tag*{Lemma \ref{lem:rest}.(\ref{lem:rest.idem.R3}) and Lemma \ref{lem:ecomp}.(\ref{lem:ecomp.0})}  \\
   &=~ 0 \tag*{Lemma \ref{lem:class-prod}.(\ref{lem:class-prod.i})} 
\end{align*}
So $\overline{p_1}^c p_0 f \perp \overline{p_0}^c p_1 g$. Thus we can take their join and define $[f,g]: A \oplus B \to C$ as follows: 
\begin{align}
    [f,g] := s \left( \overline{p_1}^c p_0 f \vee \overline{p_0}^c p_1 g \right)
\end{align}
Then we compute: 
\begin{align*}
    \iota_0 [f,g] &=~ \llangle 1_A, 0 \rrangle r  s \left( \overline{p_1}^c p_0 f \vee \overline{p_0}^c p_1 g \right) \tag*{Def. of $\iota_0$ and $[f,g]$}  \\
    &=~  \left(\overline{p_0}^c \vee \overline{p_1}^c \right) \left( \overline{p_1}^c p_0 f \vee \overline{p_0}^c p_1 g \right) \tag*{(\ref{ppcsplit})} \\
    &=~ \left(\llangle 1_A, 0 \rrangle\overline{p_0}^c \vee \llangle 1_A, 0 \rrangle\overline{p_1}^c \right) \left( \overline{p_1}^c p_0 f \vee \overline{p_0}^c p_1 g \right) \tag*{\textbf{[J.3]}} \\ 
    &=~ \left(0 \vee \llangle 1_A, 0 \rrangle \right) \left( \overline{p_1}^c p_0 f \vee \overline{p_0}^c p_1 g \right) \tag*{Lemma \ref{lem:class-prod}.(\ref{lem:class-prod.v})}  \\
      &=~ \llangle 1_A, 0 \rrangle  \left( \overline{p_1}^c p_0 f \vee \overline{p_0}^c p_1 g \right) \tag*{Lemma \ref{lem:join}.(\ref{lem:join.ii})}  \\
      &=~ \llangle 1_A, 0 \rrangle \overline{p_1}^c p_0 f \vee \llangle 1_A, 0 \rrangle \overline{p_0}^c p_1 g \tag*{\textbf{[J.3]}} \\ 
&=~   \llangle 1_A, 0 \rrangle p_0 f \vee 0 \tag*{Lemma \ref{lem:class-prod}.(\ref{lem:class-prod.v})}  \\
&=~  \llangle 1_A, 0 \rrangle p_0 f \tag*{Lemma \ref{lem:join}.(\ref{lem:join.ii})}  \\
&=~ f \tag*{(\ref{diag:pair2})}
\end{align*}
So $\iota_0 [f,g]=f$, and similarly we can show that $\iota_1 [f,g]=g$. Lastly for uniqueness, suppose that there is a map $h: A \oplus B \to C$ such that $\iota_0 h =f$ and $\iota_1 h =g$. Now we first compute that: 
\begin{align*}
    s p_0 &=~ srs p_0 \tag*{(\ref{ppcsplit})} \\
    &=~ s\left(\overline{p_0}^c \vee \overline{p_1}^c \right) p_0 \\
    &=~ s\left(\overline{p_0}^c p_0 \vee \overline{p_1}^c p_0 \right) \tag*{Lemma \ref{lem:join}.(\ref{lem:join.iii})}  \\
    &=~ s\left(0 \vee \overline{p_1}^c p_0 \right) \tag*{Lemma \ref{lem:ecomp}.(\ref{lem:ecomp.i})}  \\
    &=~ s \overline{p_1}^c p_0 \tag*{Lemma \ref{lem:join}.(\ref{lem:join.ii})} 
\end{align*}
So $s p_0=s \overline{p_1}^c p_0$, and similarly we can show that $s p_1= s \overline{p_0}^c p_1$. Then we compute: 
\begin{align*}
h &=~ s r s r  h \tag*{(\ref{ppcsplit})} \\
&=~ s \left( \overline{p_0}^c \vee \overline{p_1}^c \right) r h \tag*{(\ref{ppcsplit})} \\
&=~ s \left( \llangle \overline{p_1}^c p_0 , 0 \rrangle   \vee  \llangle 0,\overline{p_0}^c p_1 \rrangle   \right) r h \tag*{Lemma \ref{lem:class-prod}.(\ref{lem:class-prod.ii})} \\
&=~ s \left( \llangle \overline{p_1}^c p_0 , 0 \rrangle rh  \vee  \llangle 0,\overline{p_0}^c p_1 \rrangle rh  \right) \tag*{Lemma \ref{lem:join}.(\ref{lem:join.iii})}  \\
&=~ s \left( \overline{p_1}^c p_0 \llangle 1_A , 0 \rrangle rh  \vee  \overline{p_0}^c p_1 \llangle 0, 1_B \rrangle rh  \right) \tag*{(\ref{eq:pair-comp})} \\
&=~ s \left( \overline{p_1}^c p_0 \iota_0 h  \vee  \overline{p_0}^c p_1 \iota_1 h  \right) \tag*{Def. of $\iota_j$} \\
&=~ s \left( \overline{p_1}^c p_0 f \vee \overline{p_0}^c p_1 g \right) \tag*{Assump. on $h$} \\
&=~  [f,g]\tag*{Def. of $[f,g]$}
\end{align*}
So $[f,g]$ is unique. Thus we conclude that $A \oplus B$ is a restriction coproduct. 
\end{enumerate}
\end{proof}

We will now explain how in a classical restriction category with both products $\&$ and restriction products $\times$, it turns out that the product $A \& B$ is always the restriction coproduct of $A$, $B$, and $A \times B$, that is, $A \& B = A \oplus B \oplus (A \times B)$. To do so, we will make use of the following: 

\begin{lemma} \label{lem:coprodsplit} \cite[Lemma 7.7]{cockett2009boolean} Let $\mathbb{X}$ be a classical restriction category, and let ${e_0: A \to A}$, $e_1: A \to A$, and $e_2: A \to A$ be split restriction idempotents, with splittings $r_i: A \to A_i$ and $s_i: A_i \to A$, that are pairwise disjoint and such that $e_0 \vee e_1 \vee e_2 = 1_A$. Then $A$ is a restriction coproduct of $A_0$, $A_1$, and $A_2$ where the injections are $s_i: A_i \to A$, and furthermore, for maps $f_0: A_0 \to C$, $f_1: A_1 \to C$, and $f_2: A_2 \to C$, their copairing $[f_0,f_1,f_2]: A \to C$ is defined as $[f_0, f_1, f_2] = r_0 f_0 \vee r_1 f_1 \vee r_2 f_2$. 
\end{lemma}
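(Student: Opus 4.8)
The plan is to verify directly that $A$, equipped with the sections $s_0,s_1,s_2$ as injections, satisfies the two requirements of a restriction coproduct (Def. \ref{def:restcoprod}): the injections are total, and every cotupling $[f_0,f_1,f_2]$ exists and is unique, with the stated join formula realizing it. The whole argument is driven by a handful of consequences of the splitting equations $r_is_i=e_i$, $s_ir_i=1_{A_i}$ together with pairwise disjointness and the covering identity $e_0\vee e_1\vee e_2=1_A$, so I would extract those first.

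Each $s_i$ is a section, hence monic, hence total by Lemma \ref{lem:rest}.(\ref{lem:rest.total}); this settles totality of the injections and, through Lemma \ref{lem:rest}.(\ref{lem:rest.total.2}), gives $\overline{r_i}=\overline{r_is_i}=\overline{e_i}=e_i$. From the splitting I also record $s_ie_i=s_ir_is_i=s_i$. The crucial off-diagonal vanishing is $e_ir_j=0$ for $i\neq j$: since the $e_i$ are pairwise disjoint restriction idempotents we have $e_ie_j=0$ by Lemma \ref{lem:join}.(\ref{lem:join.vii}), while $r_j=\overline{r_j}\,r_j=e_jr_j$ by \textbf{[R.1]}, so $e_ir_j=e_ie_jr_j=0$. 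Composing on the left with $s_i$ then yields $s_ir_j=s_ie_ir_j=0$ for $i\neq j$, whereas $s_ir_i=1_{A_i}$.

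For existence, given $f_i\colon A_i\to C$ I claim $r_0f_0,r_1f_1,r_2f_2$ are pairwise disjoint, so (disjoint maps being compatible) their join exists. The key sublemma is $\overline{r_if_i}\leq \overline{r_i}=e_i$, which follows from the restriction axioms: by Lemma \ref{lem:rest}.(\ref{lem:rest.i}) and \textbf{[R.4]}, $\overline{r_if_i}=\overline{r_i\overline{f_i}}=\overline{\,\overline{r_if_i}\,r_i}$, and then Lemma \ref{lem:rest}.(\ref{lem:rest.idem.R3}) gives $\overline{\,\overline{r_if_i}\,r_i}=\overline{r_if_i}\,\overline{r_i}=\overline{r_if_i}\,e_i$. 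Hence $\overline{r_if_i}\,r_jf_j=\overline{r_if_i}\,e_ir_jf_j=0$. Setting $[f_0,f_1,f_2]:=r_0f_0\vee r_1f_1\vee r_2f_2$ and composing on the left with $s_j$, join axiom \textbf{[J.3]} gives $s_j[f_0,f_1,f_2]=s_jr_0f_0\vee s_jr_1f_1\vee s_jr_2f_2$; the identities above collapse every term but $s_jr_jf_j=f_j$, so $s_j[f_0,f_1,f_2]=f_j$ using $f\vee 0=f$ (Lemma \ref{lem:join}.(\ref{lem:join.ii})).

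Uniqueness then comes for free from the covering hypothesis: if $k\colon A\to C$ satisfies $s_ik=f_i$ for all $i$, post-composing $1_A=e_0\vee e_1\vee e_2$ with $k$ (Lemma \ref{lem:join}.(\ref{lem:join.iii})) gives $k=e_0k\vee e_1k\vee e_2k=r_0s_0k\vee r_1s_1k\vee r_2s_2k=r_0f_0\vee r_1f_1\vee r_2f_2=[f_0,f_1,f_2]$, which at once proves uniqueness and confirms the asserted formula for the copairing. I expect the main obstacle to be the existence half, specifically establishing pairwise disjointness of the $r_if_i$ so that the defining join is legitimate, since this is where the splitting equations, the off-diagonal identity $e_ir_j=0$, and the inequality $\overline{r_if_i}\leq e_i$ must all be combined; once the joins are known to exist, the remaining manipulations with \textbf{[J.3]} and the covering identity are routine.
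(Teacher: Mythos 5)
Your proof is correct. There is, however, nothing in the paper to compare it against: Lemma \ref{lem:coprodsplit} is stated by citation only, imported from Lemma 7.7 of \cite{cockett2009boolean} without proof. Your argument therefore serves as a self-contained replacement for that citation, and every step checks out against the paper's stated toolkit: totality of the $s_i$ as sections via Lemma \ref{lem:rest}.(\ref{lem:rest.total}); the identity $\overline{r_i}=\overline{r_is_i}=e_i$ via Lemma \ref{lem:rest}.(\ref{lem:rest.total.2}); the off-diagonal vanishing $e_ir_j=0$ from Lemma \ref{lem:join}.(\ref{lem:join.vii}) together with \textbf{[R.1]}, whence $s_ir_j=0$ for $i\neq j$; the sublemma $\overline{r_if_i}=\overline{r_if_i}\,e_i$ (correctly assembled from Lemma \ref{lem:rest}.(\ref{lem:rest.i}), \textbf{[R.4]}, and Lemma \ref{lem:rest}.(\ref{lem:rest.idem.R3})), which is exactly what legitimizes the defining join $r_0f_0\vee r_1f_1\vee r_2f_2$; and the covering identity $e_0\vee e_1\vee e_2=1_A$ with Lemma \ref{lem:join}.(\ref{lem:join.iii}), which delivers uniqueness and the copairing formula in one stroke. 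You also correctly identified the one genuinely non-routine point, namely establishing pairwise disjointness of the $r_if_i$ so that the join exists; the rest is, as you say, routine manipulation with \textbf{[J.3]}. This is the natural proof of the result and is very likely the same in outline as the one in \cite{cockett2009boolean}, but within the four corners of this paper your write-up fills a gap rather than duplicating an existing argument.
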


\begin{proposition} Let $\mathbb{X}$ be a classical restriction category with binary products and binary restriction products. Then for each object $A$ and $B$, their product $A \& B$ is a restriction coproduct of $A$, $B$, and $A \times B$ with injection maps ${\iota_0: A \to A\&B}$, $\iota_1: B \to A\&B$, and $\iota_2: A \times B \to A\& B$ respectively defined as follows: 
\begin{align*}
    \iota_0 = \llangle 1_A, 0 \rrangle && \iota_1 = \llangle 0,1_B \rrangle && \iota_2 = \llangle \pi_0, \pi_1 \rrangle 
\end{align*}
and furthermore, for maps $f: A \to C$, $g: B \to C$, and $h: A \times B \to C$, their copairing $[f,g,h]: A \& B \to C$ is defined as follows: 
\[ [f,g,h] = \overline{p_1}^c p_0 f \vee \overline{p_0}^c p_1 g \vee \langle p_0, p_1 \rangle h  \]
Moreover, the following equalities hold: 
\begin{align*}
    p_0 = [1_A, 0, \pi_0] && p_1 = [0, 1_B, \pi_1] 
\end{align*}
\end{proposition}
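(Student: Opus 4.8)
The plan is to realize $A \& B$ as a three-fold restriction coproduct by invoking Lemma \ref{lem:coprodsplit} directly: that lemma manufactures a restriction coproduct out of a triple of pairwise disjoint split restriction idempotents whose join is the identity, and it even records the resulting injections and copairing. The three idempotents have already been prepared by the earlier results. Set $e_0 = \overline{p_1}^c$, $e_1 = \overline{p_0}^c$, and $e_2 = \overline{p_0}\,\overline{p_1}$ on $A \& B$. By Lemma \ref{lem:class-prod}.(\ref{lem:class-prod.vii}), $e_0$ splits through $A$ via $r_0 = \overline{p_1}^c p_0$ and $s_0 = \llangle 1_A, 0 \rrangle$, and $e_1$ splits through $B$ via $r_1 = \overline{p_0}^c p_1$ and $s_1 = \llangle 0, 1_B \rrangle$; by Lemma \ref{lem:prod-restprod}.(i), $e_2$ splits through $A \times B$ via $r_2 = \langle p_0, p_1 \rangle$ and $s_2 = \llangle \pi_0, \pi_1 \rrangle$. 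Thus the sections $s_0, s_1, s_2$ are exactly the three injections $\iota_0, \iota_1, \iota_2$ in the statement.

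First I would verify pairwise disjointness. Since these are all restriction idempotents, by Lemma \ref{lem:join}.(\ref{lem:join.vii}) it suffices to check that the relevant products vanish. The identity $e_0 e_1 = \overline{p_1}^c\,\overline{p_0}^c = 0$ is precisely Lemma \ref{lem:class-prod}.(\ref{lem:class-prod.i}). For $e_0 e_2$ and $e_1 e_2$, commutativity of restriction idempotents (Lemma \ref{lem:rest}.(\ref{lem:rest.idem.R2})) lets me pull the complemented factor next to its uncomplemented partner, after which $e e^c = 0$ (Lemma \ref{lem:ecomp}.(\ref{lem:ecomp.ii})) finishes the job: $\overline{p_1}^c\,\overline{p_0}\,\overline{p_1} = \overline{p_0}\,\overline{p_1}^c\,\overline{p_1} = 0$, and symmetrically $\overline{p_0}^c\,\overline{p_0}\,\overline{p_1} = 0$.

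Next I would check that $e_0 \vee e_1 \vee e_2 = 1_{A \& B}$. The cleanest route is through the Boolean algebra of restriction idempotents: by de Morgan (Lemma \ref{lem:ecomp}.(\ref{lem:ecomp.vii})) one has $\overline{p_0}^c \vee \overline{p_1}^c = (\overline{p_0}\,\overline{p_1})^c$, so $e_0 \vee e_1 \vee e_2 = (\overline{p_0}\,\overline{p_1})^c \vee \overline{p_0}\,\overline{p_1}$, which is $1$ by Lemma \ref{lem:ecomp}.(\ref{lem:ecomp.ii}) applied to the restriction idempotent $\overline{p_0}\,\overline{p_1}$. With disjointness and this join in hand, Lemma \ref{lem:coprodsplit} immediately yields that $A \& B$ is the restriction coproduct of $A$, $B$, and $A \times B$ with injections $\iota_i = s_i$ and copairing $[f,g,h] = r_0 f \vee r_1 g \vee r_2 h = \overline{p_1}^c p_0 f \vee \overline{p_0}^c p_1 g \vee \langle p_0, p_1 \rangle h$, exactly as claimed.

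Finally, to confirm $p_0 = [1_A, 0, \pi_0]$, I would feed these three maps into the copairing formula just obtained. The middle term dies since $\overline{p_0}^c p_1 \cdot 0 = 0$ (and is dropped via Lemma \ref{lem:join}.(\ref{lem:join.ii})); the last term simplifies through the restriction-product equation (\ref{diag:pair1}) to $\langle p_0, p_1 \rangle \pi_0 = \overline{p_1} p_0$; and the first term is $\overline{p_1}^c p_0$. Factoring $p_0$ out on the right using Lemma \ref{lem:join}.(\ref{lem:join.iii}) gives $(\overline{p_1}^c \vee \overline{p_1})\, p_0 = 1 \cdot p_0 = p_0$ by Lemma \ref{lem:ecomp}.(\ref{lem:ecomp.ii}), and the computation of $p_1 = [0, 1_B, \pi_1]$ is symmetric. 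I expect no genuine obstacle in this argument; the only step demanding any ingenuity is recognizing that the join-is-identity condition collapses, via de Morgan, to the tautology $e^c \vee e = 1$ for $e = \overline{p_0}\,\overline{p_1}$, after which the conclusion is a direct invocation of Lemma \ref{lem:coprodsplit}.
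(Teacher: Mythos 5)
Your proposal is correct and follows essentially the same route as the paper's proof: the same three split restriction idempotents $\overline{p_1}^c$, $\overline{p_0}^c$, and $\overline{p_0}\,\overline{p_1}$ with the same splittings from Lemma \ref{lem:class-prod}.(\ref{lem:class-prod.vii}) and Lemma \ref{lem:prod-restprod}, the same pairwise disjointness checks, the same de Morgan collapse to $e \vee e^c = 1$ for the join-is-identity condition, and the same invocation of Lemma \ref{lem:coprodsplit}. The only cosmetic difference is the final step: the paper obtains $p_0 = [1_A, 0, \pi_0]$ from the couniversal property by computing $\iota_j p_0$ and citing uniqueness of the copairing, whereas you evaluate the copairing formula directly and factor out $p_0$ via Lemma \ref{lem:join}.(\ref{lem:join.iii}); both are equally valid.
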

\begin{proof} We wish to make use of Lemma \ref{lem:coprodsplit}. To do so, we need to provide three split restriction idempotents of type $A \& B$, that are pairwise disjoint, and whose join is equal to the identity. So consider the restriction idempotents $e_0 := \overline{p_1}^c$, $e_1 := \overline{p_0}^c$, and $e_2 := \overline{p_0}~ \overline{p_1}$. By Lemma \ref{lem:class-prod}.(\ref{lem:class-prod.vii}), $\overline{p_1}^c$ splits through $A$ via $r_0 := \overline{p_1}^c p_0$ and $s_0 := \llangle 1_A, 0 \rrangle$, while $\overline{p_0}^c$ splits through $B$ via $r_1 := \overline{p_0}^c p_1$ and $s_1 := \llangle 0, 1_B \rrangle$. By Lemma \ref{lem:prod-restprod}, we also know that $\overline{p_0}~ \overline{p_1}$ splits through $A \times B$ via $r_2 := \langle p_0, p_1 \rangle$ and $s_2 := \left\llangle \pi_0, \pi_1 \right\rrangle$. Now by Lemma \ref{lem:class-prod}.(\ref{lem:class-prod.i}), we have that $\overline{p_0}^c \perp \overline{p_1}^c$, and by Lemma \ref{lem:ecomp}.(\ref{lem:ecomp.ii}), it follows that $\overline{p_0}~ \overline{p_1} \perp \overline{p_j}^c$ as well. So our chosen restriction idempotents are pairwise disjoint. Taking their join, we compute that: 
\begin{align*}
   \overline{p_1}^c \vee \overline{p_0}^c \vee \overline{p_0}~ \overline{p_1} &=~  \overline{p_0}~ \overline{p_1} \vee  \overline{p_0}^c \vee \overline{p_1}^c \\
    &=~   \overline{p_0}~ \overline{p_1} \vee (  \overline{p_0}~ \overline{p_1})^c \tag*{Lemma \ref{lem:ecomp}.(\ref{lem:ecomp.ii})} \\
    &=~ 1_{A \& B} \tag*{Lemma \ref{lem:ecomp}.(\ref{lem:ecomp.ii})}
\end{align*}
    Thus $\overline{p_1}^c \vee \overline{p_0}^c \vee \overline{p_0}~ \overline{p_1} =  1_{A \& B}$ as desired. Thus, we can apply Lemma \ref{lem:coprodsplit} to obtain that $A \& B$ is the restriction coproduct of $A$, $B$, and $A \times B$, with the injections given by $\iota_0 = \llangle 1_A, 0 \rrangle$, $\iota_1 = \llangle 0,1_B \rrangle$, and $\iota_2 = \llangle \pi_0, \pi_1 \rrangle$, and the copairing by $[f,g,h] = \overline{p_1}^c p_0 f \vee \overline{p_0}^c p_1 g \vee \langle p_0, p_1 \rangle h$, as desired. Now by definition, we also have that $\iota_0 p_0 = 1_A$, $\iota_1 p_0 = 0$, and $\iota_2 p_0 = \pi_0$. Thus by the couniversal of the coproduct, it follows that $p_0 = [1_A, 0, \pi_0]$. Similarly, we can also show that $p_1 = [0, 1_B, \pi_1]$.
\end{proof}

Thus if a distributive restriction category is classical and has finite products, said product and projections must be the classical product and classical projections.

\section{Classical Distributive Restriction Categories}\label{sec:CDRC}

In this section we prove the main result of this paper that a distributive restriction category is classical if and only if it has classical products. 

We begin by showing that a classical distributive category (by which we simply mean a distributive restriction category that is also classical, and no further assumptions) has classical products. First, here is a useful lemma about the restrictions of the projections and their complements. 

\begin{lemma}\label{lem:cdrc} In a classic  distributive restriction category $\mathbb{X}$, 
    \begin{enumerate}[{\em (i)}]
\item \label{lem:cdrc.0} $\overline{p_0} = \iota^\circ_0\iota_0 \vee \iota^\circ_2\iota_2$ and $\overline{p_0} = \iota^\circ_1\iota_1 \vee \iota^\circ_2\iota_2$; 
\item \label{lem:cdrc.i} $\overline{p_0}^c = \iota^\circ_1\iota_1$ and $\overline{p_1}^c = \iota^\circ_0\iota_0$;
\item \label{lem:cdrc.ii} $\overline{p_1}^c p_0 = \iota^\circ_0$ and $\overline{p_0}^c p_1 = \iota^\circ_1$
\end{enumerate}
\end{lemma}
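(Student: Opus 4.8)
The plan is to reduce all three parts to the explicit description of the restrictions of the classical projections from Lemma~\ref{lem:cprod}.(\ref{lem:cprod.i}), namely $\overline{p_0} = 1_A \oplus 0 \oplus 1_{A \times B}$ and $\overline{p_1} = 0 \oplus 1_B \oplus 1_{A \times B}$, combined with the join calculus for restriction coproducts established in Lemma~\ref{lem:class-coprod}. (I note in passing that the second displayed equation of part (i) should read $\overline{p_1} = \iota^\circ_1\iota_1 \vee \iota^\circ_2\iota_2$; as printed it lists $\overline{p_0}$ twice.) The key observation is that I should not try to evaluate the joins of ``diagonal'' idempotents by hand, but instead route everything through Lemma~\ref{lem:class-coprod}, which already packages the needed facts about the partition of the identity by the $\iota^\circ_j\iota_j$.

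For part (i), I would instantiate Lemma~\ref{lem:class-coprod}.(\ref{lem:class-coprod.ii}) with the family $f_0 = 1_A$, $f_1 = 0$, $f_2 = 1_{A \times B}$. This gives $\iota^\circ_0 \iota_0 \vee \iota^\circ_1 0 \iota_1 \vee \iota^\circ_2 \iota_2 = 1_A \oplus 0 \oplus 1_{A \times B}$, whose right-hand side is exactly $\overline{p_0}$ by Lemma~\ref{lem:cprod}.(\ref{lem:cprod.i}); since the middle term $\iota^\circ_1 0 \iota_1 = 0$ drops out by $f \vee 0 = f$ (Lemma~\ref{lem:join}.(\ref{lem:join.ii})), this yields $\overline{p_0} = \iota^\circ_0\iota_0 \vee \iota^\circ_2\iota_2$. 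The identity for $\overline{p_1}$ is entirely symmetric, using $f_1 = 1_B$ and $f_0 = 0$.

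For part (ii), I would appeal to the characterization of complements in Lemma~\ref{lem:ecomp}.(\ref{lem:ecomp.iv}): to conclude $\overline{p_0}^c = \iota^\circ_1\iota_1$ it suffices to check that $\iota^\circ_1\iota_1$ is a restriction idempotent, that $\overline{p_0}\,\iota^\circ_1\iota_1 = 0$, and that $\overline{p_0} \vee \iota^\circ_1\iota_1 = 1_{A \& B}$. The first holds since $\iota^\circ_1\iota_1 = \overline{\iota^\circ_1}$ by Lemma~\ref{lem:restcoprodzero}.(\ref{lem:restcoprodzero.iv}). For the disjointness, I would substitute $\overline{p_0} = \iota^\circ_0\iota_0 \vee \iota^\circ_2\iota_2$ from part (i) and distribute the post-composition by $\iota^\circ_1\iota_1$ across the join using Lemma~\ref{lem:join}.(\ref{lem:join.iii}), obtaining $\iota^\circ_0\iota_0\,\iota^\circ_1\iota_1 \vee \iota^\circ_2\iota_2\,\iota^\circ_1\iota_1$, where each term vanishes because $\iota_i\iota^\circ_1 = 0$ for $i \neq 1$ (Lemma~\ref{lem:restcoprodzero}.(\ref{lem:restcoprodzero.iii})). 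For the covering condition, substituting part (i) gives $\overline{p_0} \vee \iota^\circ_1\iota_1 = \iota^\circ_0\iota_0 \vee \iota^\circ_1\iota_1 \vee \iota^\circ_2\iota_2 = 1_{A \& B}$ by Lemma~\ref{lem:class-coprod}.(\ref{lem:class-coprod.iv}). The claim $\overline{p_1}^c = \iota^\circ_0\iota_0$ is symmetric.

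For part (iii), I would simply substitute part (ii) and compute the composites $\iota_j p_j$: from $\overline{p_1}^c = \iota^\circ_0\iota_0$ and the definition $p_0 = [1_A, 0, \pi_0]$ we get $\overline{p_1}^c p_0 = \iota^\circ_0 (\iota_0 p_0) = \iota^\circ_0 \cdot 1_A = \iota^\circ_0$, and dually $\overline{p_0}^c p_1 = \iota^\circ_1 (\iota_1 p_1) = \iota^\circ_1$. The computations throughout are routine bookkeeping with the coproduct and join identities; the only mild obstacle is ensuring that the three diagonal idempotents $\iota^\circ_0\iota_0$, $\iota^\circ_1\iota_1$, $\iota^\circ_2\iota_2$ genuinely form a disjoint partition of $1_{A \& B}$, which is precisely what Lemma~\ref{lem:class-coprod}.(\ref{lem:class-coprod.iv}) supplies, so that the hypotheses of the complement characterization in Lemma~\ref{lem:ecomp}.(\ref{lem:ecomp.iv}) are met.
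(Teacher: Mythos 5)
Your proposal is correct and follows essentially the same route as the paper: part (i) via Lemma \ref{lem:cprod}.(\ref{lem:cprod.i}) combined with Lemma \ref{lem:class-coprod}.(\ref{lem:class-coprod.ii}), part (ii) via the characterization of complements in Lemma \ref{lem:ecomp}.(\ref{lem:ecomp.iv}) with the same disjointness and covering checks (the paper verifies $\overline{p_0}\,\iota^\circ_1\iota_1 = 0$ componentwise through the coproduct form rather than by distributing over the join, a negligible difference), and part (iii) by the same direct computation; you also correctly spotted the typo that the second identity in (i) should read $\overline{p_1}$.
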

\begin{proof}
    \begin{enumerate}[{\em (i)}]
    \item  This follows from Lemma \ref{lem:cprod}.(\ref{lem:cprod.i}) and Lemma \ref{lem:class-coprod}.(\ref{lem:class-coprod.ii}), so $\overline{p_0} = 1_A \oplus  0 \oplus  1_{A \times B} = \iota^\circ_0\iota_0 \vee \iota^\circ_2\iota_2$ and $\overline{p_1} = 0 \oplus 1_B \oplus  1_{A \times B} = \iota^\circ_1\iota_1 \vee \iota^\circ_2\iota_2$. 
\item To show this, we make use of Lemma \ref{lem:ecomp}.(\ref{lem:ecomp.iv}). Indeed, recall that by Lemma \ref{lem:restcoprodzero}.(\ref{lem:restcoprodzero.iv}), $\iota^\circ_1\iota_1 = \overline{\iota^\circ_1}$ is a restriction idempotent. So we compute: 
\begin{align*}
   \overline{p_0} \iota^\circ_1\iota_1 &=~ \left(1_A \oplus  0 \oplus  1_{A \times B} \right)\left(0 \oplus 1_B \oplus 0 \right) \tag*{Lemma \ref{lem:cprod}.(\ref{lem:cprod.i}) and Lemma \ref{lem:restcoprodzero}.(\ref{lem:restcoprodzero.ii})} \\
   &=~ 0 \oplus 0 \oplus 0 \tag*{(\ref{eq:copair-comp})} \\
   &=~ 0 
\end{align*}
\begin{align*}
    \overline{p_0} \vee  \iota^\circ_1\iota_1 &=~ \iota^\circ_0\iota_0 \vee \iota^\circ_2\iota_2 \vee  \iota^\circ_1\iota_1 \tag*{Lemma \ref{lem:cdrc}.(\ref{lem:cdrc.0})} \\
    &=~ \iota^\circ_0\iota_0  \vee  \iota^\circ_1\iota_1 \vee \iota^\circ_2\iota_2 \\
    &=~ 1_{A \oplus B \oplus (A \times B)} \tag*{Lemma \ref{lem:class-coprod}.(\ref{lem:class-coprod.iv})}
\end{align*}
Then by Lemma \ref{lem:ecomp}.(\ref{lem:ecomp.iv}), it follows that $\overline{p_0}^c = \iota^\circ_1\iota_1$. Similarly, we can show that $\overline{p_1}^c = \iota^\circ_0\iota_0$.
\item By definition of the classical projections and (\ref{lem:cdrc.i}), it follows that $\overline{p_1}^c p_0 = \iota^\circ_0$ and $\overline{p_0}^c p_1 = \iota^\circ_1$ as desired. 
\end{enumerate}
\end{proof}

\begin{proposition}\label{prop:classtoclassprod} A classical distributive restriction category has classical products.
\end{proposition}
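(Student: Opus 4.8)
The plan is to verify directly that the classical product diagram (\ref{def:p_i}) is a categorical product, by exhibiting the universal pairing and establishing uniqueness through a canonical decomposition of every map into $A \& B$. Since a classical restriction category is in particular a join restriction category, it has restriction zeroes by Lemma \ref{lem:join}.(\ref{lem:join.i}), so the classical projections $p_0, p_1$ of Definition \ref{def:classprod} are defined and all the identities of Lemma \ref{lem:cprod}, Lemma \ref{lem:cdrc}, and Lemma \ref{lem:class-coprod} are available.

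First I would record the identity $\iota^\circ_2 = \langle p_0, p_1 \rangle$. This follows from Lemma \ref{lem:cprod}.(\ref{lem:cprod.ii}), which gives $\iota^\circ_2 \pi_0 = \overline{p_1} p_0$ and $\iota^\circ_2 \pi_1 = \overline{p_0} p_1$; these are precisely the defining equations (\ref{diag:pair1}) for the restriction pairing $\langle p_0, p_1 \rangle$, so uniqueness of the restriction product forces $\iota^\circ_2 = \langle p_0, p_1 \rangle$. Combining this with Lemma \ref{lem:cdrc}.(\ref{lem:cdrc.ii}) ($\iota^\circ_0 = \overline{p_1}^c p_0$ and $\iota^\circ_1 = \overline{p_0}^c p_1$), I would prove a canonical form: for every map $h : C \to A \& B$,
\[ h = \overline{hp_1}^c (hp_0) \iota_0 \;\vee\; \overline{hp_0}^c (hp_1) \iota_1 \;\vee\; \langle hp_0, hp_1 \rangle \iota_2. \]
Indeed, decomposing the identity as $1_{A \& B} = \iota^\circ_0 \iota_0 \vee \iota^\circ_1 \iota_1 \vee \iota^\circ_2 \iota_2$ (Lemma \ref{lem:class-coprod}.(\ref{lem:class-coprod.iv})) and precomposing $h$, Lemma \ref{lem:join}.(\ref{lem:join.iii}) yields $h = h\iota^\circ_0\iota_0 \vee h\iota^\circ_1\iota_1 \vee h\iota^\circ_2\iota_2$; then $h \iota^\circ_0 = h \overline{p_1}^c p_0 = \overline{hp_1}^c (hp_0)$ by Lemma \ref{lem:ecomp}.(\ref{lem:ecomp.vi}), symmetrically for $\iota^\circ_1$, and $h\iota^\circ_2 = h \langle p_0,p_1\rangle = \langle hp_0, hp_1 \rangle$ by (\ref{eq:rest-prod1}). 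This canonical form delivers uniqueness at once: any $h$ with $hp_0 = f$ and $hp_1 = g$ is determined by $f$ and $g$ alone.

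For existence I would set $\llangle f, g \rrangle := \overline{g}^c f \iota_0 \vee \overline{f}^c g \iota_1 \vee \langle f, g \rangle \iota_2$. Before taking the join I must check the three summands are pairwise disjoint: using that the injections are total, their restrictions are $\overline{g}^c\,\overline{f}$, $\overline{f}^c\,\overline{g}$, and $\overline{\langle f,g\rangle} = \overline{f}\,\overline{g}$ (the last being standard and derivable from (\ref{diag:pair1}) and Lemma \ref{lem:rest}), and the relevant pairwise products vanish by Lemma \ref{lem:ecomp}.(\ref{lem:ecomp.ii}) since $\overline{f}\,\overline{f}^c = 0 = \overline{g}\,\overline{g}^c$. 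I would then compute $\llangle f, g \rrangle p_0$ via $\iota_0 p_0 = 1_A$, $\iota_1 p_0 = 0$, $\iota_2 p_0 = \pi_0$, Lemma \ref{lem:join}.(\ref{lem:join.iii}), and (\ref{diag:pair1}), obtaining $\overline{g}^c f \vee \overline{g} f = (\overline{g}^c \vee \overline{g}) f = f$ by Lemma \ref{lem:ecomp}.(\ref{lem:ecomp.ii}); symmetrically $\llangle f, g \rrangle p_1 = g$. Together with the uniqueness from the canonical form, this shows $A \& B$ with $p_0, p_1$ is a categorical product, so $\mathbb{X}$ has classical products.

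The main obstacle I anticipate is the bookkeeping with the complemented restriction idempotents $\overline{(-)}^c$: both the disjointness of the three summands and the step pushing $\overline{p_i}^c$ past $h$ in the canonical form depend on applying the ``complement'' analogues of the restriction axioms from Lemma \ref{lem:ecomp} in exactly the right order. Once the canonical form of Step 2 is in hand, however, both existence and uniqueness follow mechanically, so essentially all of the conceptual work lies in establishing $\iota^\circ_2 = \langle p_0, p_1\rangle$ and the decomposition of $h$.
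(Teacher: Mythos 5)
Your proposal is correct and takes essentially the same route as the paper's proof: the identical pairing $\llangle f,g \rrangle := \overline{g}^c f\iota_0 \vee \overline{f}^c g\iota_1 \vee \langle f,g \rangle \iota_2$, the same disjointness and projection computations, and uniqueness via the decomposition $1_{A \& B} = \iota^\circ_0\iota_0 \vee \iota^\circ_1\iota_1 \vee \iota^\circ_2\iota_2$ together with $h\iota^\circ_0 = \overline{hp_1}^c hp_0$, $h\iota^\circ_1 = \overline{hp_0}^c hp_1$, and $h\iota^\circ_2 = \langle hp_0, hp_1 \rangle$. The only cosmetic difference is organizational: you establish $\iota^\circ_2 = \langle p_0, p_1 \rangle$ once and use naturality of the restriction pairing, whereas the paper computes $h\iota^\circ_2 \pi_i$ directly via \textbf{[R.4]} and then invokes the universal property of the restriction product.
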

\begin{proof} Let $\mathbb{X}$ be a classical distributive restriction category. Since $\mathbb{X}$ has restriction zeroes by assumption, we need only show that $A \oplus  B \oplus  (A \times B)$ is a product of $A$ and $B$. So let $f: C \to A$ and ${g: C \to B}$ be a given pair of maps. First consider the parallel maps $\overline{g}^c f\iota_0$, $\overline{f}^c g\iota_1$, and $\langle f,g \rangle \iota_2$. We first check that: 
\begin{align*}
\overline{\overline{g}^c f\iota_0} \overline{f}^c g\iota_1 &=~ \overline{\overline{f}^c\overline{g}^c f\iota_0} g\iota_1 \tag*{Lemma \ref{lem:rest}.(\ref{lem:rest.idem.R3})} \\
&=~ \overline{\overline{g}^c \overline{f}^c f\iota_0} g\iota_1 \tag*{Lemma \ref{lem:rest}.(\ref{lem:rest.idem.R2})}  \\ 
&=~ \overline{\overline{g}^c  0 \iota_0} g\iota_1 \tag*{Lemma \ref{lem:ecomp}.(\ref{lem:ecomp.i})} \\
&=~ 0 \tag*{Rest. zero map}
\end{align*}
So $\overline{g}^c f\iota_0 \perp \overline{f}^c g\iota_1$. To show that $\overline{g}^c f\iota_0$ and $\langle f,g \rangle \iota_2$ are disjoint, recall that the restriction of a pair is given by the meet of the restrictions \cite[Prop 2.8]{cockett2012differential}: 
\begin{align}\label{eq:rest-pair}
    \overline{\langle f,g \rangle} = \overline{f}~\overline{g}
\end{align}
So we can compute: 
\begin{align*}
    \overline{\langle f,g \rangle \iota_2} \overline{g}^c f\iota_0 &=~ \overline{\langle f,g \rangle} \overline{g}^c f\iota_0 \tag*{Lemma \ref{lem:rest}.(\ref{lem:rest.total.2}) and $\iota_2$ total}  \\
    &=~ \overline{f} \overline{g} ~\overline{g}^c f\iota_0 \tag*{(\ref{eq:rest-pair})}  \\
    &=~ 0 \tag*{Lemma \ref{lem:ecomp}.(\ref{lem:ecomp.ii})} 
\end{align*}
So $ \overline{g}^c f\iota_0 \perp \langle f,g \rangle \iota_2$, and similarly we can show that $\overline{f}^c g\iota_1 \perp \langle f,g \rangle \iota_2$. So $\overline{g}^c f\iota_0$, $\overline{f}^c g\iota_1$, and $\langle f,g \rangle \iota_2$ are pair-wise disjoint (and thus compatible). Then define the classical pairing $\llangle f, g \rrangle: C \to A \oplus  B \oplus  (A \times B)$ as the join of those three maps: 
\begin{align} \llangle f, g \rrangle := \overline{g}^c f\iota_0 \vee \overline{f}^c g\iota_1 \vee \langle f,g \rangle \iota_2
\end{align}
We now compute that: 
\begin{align*}
\llangle f, g \rrangle p_0 &=~ \left( \overline{g}^c f\iota_0 \vee \overline{f}^c g\iota_1 \vee \langle f,g \rangle \iota_2 \right) p_0 \tag*{(\ref{diag:pair2})}\\
&=~  \overline{g}^c f\iota_0p_0 \vee \overline{f}^c g\iota_1p_0 \vee \langle f,g \rangle \iota_2p_0 \tag*{Lemma \ref{lem:join}.(\ref{lem:join.iii})} \\
&=~ \overline{g}^c f \vee 0 \vee \langle f,g \rangle\pi_0 \tag*{Def. of $p_0$} \\
&=~ \overline{g}^c f \vee \overline{g}f \tag*{Lemma \ref{lem:join}.(\ref{lem:join.ii})} \\
&=~ (\overline{g}^c \vee \overline{g})f \tag*{Lemma \ref{lem:join}.(\ref{lem:join.iii})} \\
&=~ f \tag*{Lemma \ref{lem:ecomp}.(\ref{lem:ecomp.ii})} \\
\end{align*}
So $\llangle f, g \rrangle p_0 = f$, and similarly we can show that $\llangle f, g \rrangle p_1 = g$. Now we must show the uniqueness of $\llangle f,g \rrangle$. So let $h: C \to A \oplus  B \oplus  (A \times B)$ be a map such that $h p_0 = f$ and $h p_1 =g$. Then we first compute that:  
\begin{align*} 
h \iota^\circ_0 &=~ h \overline{p_1}^c p_0 \tag*{Lemma \ref{lem:cdrc}.(\ref{lem:cdrc.ii})} \\
&=~ \overline{h p_1}^c h p_0  \tag*{Lemma \ref{lem:ecomp}.(\ref{lem:ecomp.vi})} \\
&=~ \overline{g}^cf \tag*{Assump. on $h$}
\end{align*}
So $h \iota^\circ_0= \overline{g}^cf$ and similarly we can show that $h \iota^\circ_1 = \overline{f}^c g$. We also compute that: 
\begin{align*}
 h \iota^\circ_2 \pi_0 &=~ h \overline{p_1} p_0  \tag*{Lemma \ref{lem:cprod}.(\ref{lem:cprod.ii})} \\
 &=~ \overline{h p_1} h p_0 \tag*{\textbf{[R.4]}} \\
 &=~ \overline{g} f \tag*{Assump. on $h$}
\end{align*}
So $h \iota^\circ_2 \pi_0 = \overline{g} f$ and similarly $h \iota^\circ_2 \pi_1 = \overline{f} g$. Then by universal property of the restriction product, we have that $h \iota^\circ_2 = \langle f,g \rangle$. So finally we compute that: 
\begin{align*} 
h &=~ h\left( \iota^\circ_0 \iota_0 \vee \iota^\circ_1\iota_1 \vee \iota^\circ_2 \iota_2 \right) \tag*{Lemma \ref{lem:class-coprod}.(\ref{lem:class-coprod.iv})} \\
&=~ h\iota^\circ_0 \iota_0 \vee h\iota^\circ_1\iota_1 \vee h\iota^\circ_2 \iota_2 \tag*{Lemma \ref{lem:join}.(\ref{lem:join.iii})} \\
&=~  \overline{g}^c f\iota_0 \vee \overline{f}^c g\iota_1 \vee \langle f,g \rangle \iota_2 \tag*{Identities for $h$} \\
&=~ \llangle f, g \rrangle
\end{align*}
Thus $A \oplus  B \oplus  (A \times B)$ is a product of $A$ and $B$ as desired. So we conclude that $\mathbb{X}$ has classical products. \end{proof}

We now wish to prove the converse, that if a distributive restriction category has classical products then it is classical. To do so, we will need to make use of \emph{decisions}. 

\begin{definition}\label{def:decision} An \textbf{extensive restriction category} \cite[Sec 3]{cockett2007restriction} is a coCartesian restriction category $\mathbb{X}$ with restriction zeroes such that for every for every map $f: A \to B_0 \oplus  \hdots \oplus  B_n$ there exists a (necessarily unique) map ${\mathsf{d}[f]: A \to A \oplus  \hdots \oplus  A}$, called the \textbf{decision} \cite[Prop 2.11]{cockett2007restriction} of $f$, such that the following equalities hold: 
\begin{enumerate}[{\bf [D.1]}]
\item $\overline{f} = \mathsf{d}[f] \left[1_A, \hdots, 1_A \right]$ 
\item $\mathsf{d}[f] (f \oplus  \hdots \oplus  f) = f (\iota_0 \oplus  \hdots \oplus  \iota_n)$
\end{enumerate}
\end{definition}

Every distributive restriction category with restriction zeroes is extensive. While for the proof of Prop \ref{prop:cptoc}, just knowing that decisions exist is sufficient, it may be useful to record explicitly how decisions are constructed and some examples. 

\begin{proposition} \cite[Thm 5.8]{cockett2007restriction} A distributive restriction category with restriction zeroes is extensive, where for a map of type $f: A \to B_0 \oplus  \hdots \oplus  B_n$, its decision ${\mathsf{d}[f]: A \to A \oplus  \hdots \oplus  A}$ is defined as the following composite\footnote{This fixes the minor typo in the journal version \url{http://www.tac.mta.ca/tac/volumes/42/6/42-06abs.html}, where we copied down the incorrect formula for (\ref{def:dec}) from another reference. Fortunately, as mentioned above, we do not use this formula in anywhere and it was only for exposition. So the rest of the paper remains unchanged.}: 
 \begin{equation}\begin{gathered}\label{def:dec}
\begin{array}[c]{c} \mathsf{d}[f] \end{array}
\!\!\!:=\! \!\!\! \begin{array}[c]{c} \xymatrixcolsep{4.75pc}\xymatrixrowsep{1pc}\xymatrix{ A \ar[r]^-{ \left\langle 1_A, f \right\rangle }  & A \times \left( B_0 \oplus  \hdots \oplus  B_n \right) \ar[r]^-{ 1_A \times \left(t_{B_0} \oplus \hdots \oplus t_{B_n}\right)} & A \times (\mathsf{1} \oplus \hdots \oplus \mathsf{1}) \cong A \oplus \hdots \oplus A } \end{array}  \end{gathered}\end{equation}
\end{proposition}

\begin{example} \normalfont  In $\mathsf{PAR}$, the decision of $f: X \to Y_0 \sqcup \hdots \sqcup Y_n$ is the partial function ${\mathsf{d}[f]: X \to X \sqcup \hdots \sqcup X}$ defined as follows: 
\[ \mathsf{d}[f] (x) = \begin{cases} \iota_0(x) & \text{if } f(x) \downarrow \text{ and } f(x) \in Y_0 \\
\vdots \\
\iota_n(x) & \text{if } f(x) \downarrow \text{ and } f(x) \in Y_n \\
\uparrow & \text{o.w. }  \end{cases} \]
\end{example}

\begin{example} \normalfont  In $k\text{-}\mathsf{CALG}_\bullet$, the codecision of $f: A_0 \times \hdots \times A_n \to B$ is the non-unital $k$-algbera morphism $\mathsf{d}[f]: B \times \hdots \times B \to B$ defined as follows: 
\[ \mathsf{d}[f](b_0, \hdots, b_n) = \sum^{n}_{k=0} f(0,\hdots, 0,\underset{\substack{k\text{-th}\\\text{term}}}{1},0, \hdots,0) b \]
\end{example}

\begin{proposition}\label{prop:cptoc} A distributive restriction category with classical products is classical. 
\end{proposition}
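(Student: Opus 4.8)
The plan is to verify directly from the classical product structure that $\mathbb{X}$ satisfies the two defining conditions of a classical restriction category: that it is a join restriction category, and that every relative complement exists. Throughout I use that $\mathbb{X}$ has restriction zeroes (this is part of Def. \ref{def:classprod}) and hence, being a distributive restriction category with restriction zeroes, is extensive, so that every map into a coproduct admits a decision.

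First I would show that $\mathbb{X}$ is a join restriction category. For compatible parallel maps $f \smile g \colon C \to B$, I would define their join as the single composite $f \vee g := \llangle f, g \rrangle [1_B, 1_B, \pi_0] \colon C \to B$, where $\llangle f, g \rrangle \colon C \to B \oplus B \oplus (B \times B)$ is the classical pairing; crucially this definition presupposes no join structure. Conceptually, the decision of $\llangle f, g \rrangle$ partitions the domain into the regions where only $f$, only $g$, or both are defined, on which the codiagonal copairing $[1_B, 1_B, \pi_0]$ returns $f$, $g$, and (using compatibility, so that $\pi_0$ and $\pi_1$ agree on the overlap) their common value. I would then verify the join axioms: \textbf{[J.1]} reduces, via $\overline{f}\,\llangle f,g\rrangle = \llangle f, \overline{f} g\rrangle = \llangle f, \overline{g} f\rrangle$ (using (\ref{eq:pair-comp}) and $f \smile g$), to the collapse identity $\llangle a, b\rrangle[1_B,1_B,\pi_0] = a$ for $b \leq a$; \textbf{[J.2]} follows analogously from the universal property of $\&$ together with Lemma \ref{lem:cprod}; and \textbf{[J.3]} is immediate, since $h\,\llangle f, g\rrangle = \llangle hf, hg\rrangle$ gives $h(f\vee g) = hf \vee hg$. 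Joins of arbitrary finite pairwise-compatible families then follow by induction, using the standard fact that the binary join of two members of such a family is again compatible with the remaining members.

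Once $\mathbb{X}$ is known to be a join restriction category, Lemma \ref{lem:class-coprod} becomes available, and I would construct relative complements. For $f \leq g \colon A \to B$, set $g \backslash f := \llangle f, g\rrangle \iota^\circ_1$, the second quasi-projection component of the classical pairing. Writing $w := \llangle f, g\rrangle$ and $a := \overline{\iota^\circ_1} = 0 \oplus 1_B \oplus 0$ (Lemma \ref{lem:restcoprodzero}.(\ref{lem:restcoprodzero.ii})), one has $a\,\overline{p_0} = 0$ by Lemma \ref{lem:cprod}.(\ref{lem:cprod.i}); since $\overline{g\backslash f} = \overline{wa}$ and $f = w p_0 = w\,\overline{p_0}\,p_0$, this yields $\overline{g\backslash f}\,f = \overline{wa}\,w\,\overline{p_0}\,p_0 = w a\,\overline{p_0}\,p_0 = 0$, which is \textbf{[RC.1]}. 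For \textbf{[RC.2]}, rather than computing the codiagonal again I would decompose $g$ itself: since $g = w p_1$ with $p_1 = [0,1_B,\pi_1]$, expanding $w = w\iota^\circ_0\iota_0 \vee w\iota^\circ_1\iota_1 \vee w\iota^\circ_2\iota_2$ (Lemma \ref{lem:class-coprod}.(\ref{lem:class-coprod.iv})) and using $\llangle f,g\rrangle\iota^\circ_2 = \langle f, g\rangle$ (Lemma \ref{lem:cprod}.(\ref{lem:cprod.iii})) together with (\ref{diag:pair1}) gives $g = (g\backslash f) \vee \langle f,g\rangle\pi_1 = (g\backslash f) \vee \overline{f} g = (g \backslash f) \vee f$, the last equality by $f \leq g$. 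With joins and all relative complements in hand, $\mathbb{X}$ is classical.

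The main obstacle is the join step, specifically establishing \textbf{[J.1]} and \textbf{[J.2]} — and the underlying collapse identities such as $\llangle a,b\rrangle[1_B,1_B,\pi_0] = a$ for $b \leq a$ — purely from the universal property of the classical product at a stage where no join structure is yet available, so that none of the join lemmas may be invoked. This is precisely where the decision calculus is expected to do the real work: the axioms \textbf{[D.1]} and \textbf{[D.2]} allow one to split the relevant identity along the decision of $\llangle f,g\rrangle$ without presupposing joins, after which the two codiagonal branches can be compared and compatibility used to force their agreement. The remaining bookkeeping — extending binary joins to arbitrary finite families and tracking pairwise compatibility throughout the induction — is routine but must be carried out with care.
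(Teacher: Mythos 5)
Your definitions are exactly the right ones, and they coincide with the paper's: the join as $\llangle f,g\rrangle[1_B,1_B,\pi_0]$ and the relative complement as $\llangle f,g\rrangle\iota^\circ_1$. The relative-complement half of your argument is complete and correct, and in fact differs mildly from the paper's in a pleasant way: for \textbf{[RC.2]} the paper first proves the auxiliary identity $\llangle \iota^\circ_1, \iota^\circ_2\pi_1\rrangle[1_B,1_B,\pi_0]=p_1$ and then computes $g\backslash f \vee f = \llangle f,g\rrangle p_1 = g$, whereas you expand $\llangle f,g\rrangle$ along $\iota^\circ_0\iota_0 \vee \iota^\circ_1\iota_1 \vee \iota^\circ_2\iota_2 = 1$ (Lemma \ref{lem:class-coprod}.(\ref{lem:class-coprod.iv})) and push $p_1$ through the join, which is shorter; your \textbf{[RC.1]} computation is also valid (and, unlike the paper's, does not even use $f\leq g$). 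Both are legitimate because at that stage the join structure is already available.

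The genuine gap is the join step, and you flag it yourself: \textbf{[J.1]} is reduced to an unproven ``collapse identity'' and \textbf{[J.2]} is dismissed with ``follows analogously from the universal property of $\&$,'' with the real work deferred to a decision calculus described only in outline. This is the technical heart of the proposition and cannot be left as an expectation. Two concrete points. First, the collapse identity for the $f$-half of \textbf{[J.1]} needs no decisions at all: for $b\leq a$ one has $\overline{a}b = \overline{a}\,\overline{b}a = \overline{b}\,\overline{a}a=\overline{b}a=b$, hence $\llangle a,b\rrangle\,\overline{p_0} = \llangle a,\overline{a}b\rrangle = \llangle a,b\rrangle$ by Lemma \ref{lem:rest-product}.(\ref{lem:rest-product.ii}), while $\overline{p_0}\,[1_B,1_B,\pi_0] = (1_B\oplus 0\oplus 1_{B\times B})[1_B,1_B,\pi_0]=[1_B,0,\pi_0]=p_0$ by Lemma \ref{lem:cprod}.(\ref{lem:cprod.i}) and (\ref{eq:copair-comp}); so $\llangle a,b\rrangle[1_B,1_B,\pi_0]=\llangle a,b\rrangle p_0 = a$. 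Note, however, that $g\leq f\vee g$ needs the mirrored statement (the third branch $\pi_0$ is not symmetric), and that is where compatibility and the decision genuinely enter: the paper uses \textbf{[D.2]} to rewrite $f\vee g = \mathsf{d}[\llangle f,g\rrangle][f,g,\overline{g}f]$, replaces $\overline{g}f$ by $\overline{f}g$ using $f\smile g$, and reverses the rewriting to get $f\vee g = \llangle f,g\rrangle[1_B,1_B,\pi_1]$, after which the symmetric direct computation applies. Your sketch never isolates this step.

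Second, and more seriously, \textbf{[J.2]} does not follow ``analogously from the universal property of $\&$.'' The universal property only produces maps into $A\& B$; \textbf{[J.2]} requires controlling $\overline{f\vee g}\,h$ for an arbitrary upper bound $h$, and for that the decision axioms are needed in an essential and specific way: \textbf{[D.1]} converts $\overline{\llangle f,g\rrangle}$ into $\mathsf{d}[\llangle f,g\rrangle][1_A,1_A,1_A]$, so that \textbf{[R.2]}--\textbf{[R.4]} together with (\ref{eq:rest-coprod}) and (\ref{eq:copair-comp}) give $\overline{f\vee g}\,h = \mathsf{d}[\llangle f,g\rrangle][\overline{f}h,\overline{g}h,\overline{g}\,\overline{f}h]$, which collapses back to $\mathsf{d}[\llangle f,g\rrangle][f,g,\overline{g}f] = f\vee g$ once $f\leq h$ and $g\leq h$ are used. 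This computation (or an equivalent one) is what your proposal is missing; until it is supplied, your relative-complement argument rests on a join structure whose existence is precisely what remains unproved.
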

\begin{proof}  Let $\mathbb{X}$ be a distributive restriction category with classical products. To prove that $\mathbb{X}$ is classical, we need to show that $\mathbb{X}$ has joins and relative complements. Starting with the joins, since $\mathbb{X}$ has restriction zeroes by assumption, to prove that $\mathbb{X}$ has finite joins, it suffices to prove that $\mathbb{X}$ has binary joins \cite[Lemma 6.8]{cockett2009boolean}. So let $f: A \to B$ and $g: A \to B$ be compatible maps, $f \smile g$. Define $f \vee g: A \to B$ as follows: 
 \begin{equation}\begin{gathered}\label{def:fgjoin1}
f \vee g := \xymatrixcolsep{5pc}\xymatrix{ A \ar[r]^-{ \llangle f, g \rrangle }  & B \oplus  B \oplus  (B \times B) \ar[r]^-{[ 1_B, 1_B, \pi_0 ]} & B }  \end{gathered}\end{equation}
We first observe that $f \vee g$ can also be expressed using the decision of $\llangle f, g \rrangle$. So we compute the following: 
\begin{align*} f \vee g &=~ \llangle f, g \rrangle [1_B, 1_B, \pi_0] \tag*{Def. of $f \vee g$}\\
&=~ \llangle f, g \rrangle \left[ \iota_0 p_0, \iota_1 p_1,  \iota_2 \iota^\circ_2 \pi_0  \right] \tag*{Def. of $p_i$ and Lemma \ref{lem:restcoprodzero}.(\ref{lem:restcoprodzero.iii})} \\
&=~ \llangle f, g \rrangle \left(\iota_0 \oplus  \iota_1 \oplus  \iota_2 \right) \left[  p_0, p_1,  \iota^\circ_2 \pi_0  \right] \tag*{(\ref{eq:copair-comp})}\\
&=~ \mathsf{d}\!\left[ \llangle f, g \rrangle \right] \left(\llangle f, g \rrangle \oplus  \llangle f, g \rrangle \oplus  \llangle f, g \rrangle \right) \left[  p_0, p_1,  \iota^\circ_2 \pi_0  \right] \tag*{\textbf{[D.2]}} \\ 
&=~ \mathsf{d}\!\left[ \llangle f, g \rrangle \right] \left[ \llangle f, g \rrangle p_0, \llangle f, g \rrangle p_1, \llangle f, g \rrangle \iota^\circ_2 \pi_0  \right] \tag*{(\ref{eq:copair-comp})}\\
&=~ \mathsf{d}\!\left[ \llangle f, g \rrangle \right] [ f, g, \langle f,g \rangle \pi_0 ] \tag*{(\ref{diag:pair2}) and Lemma \ref{lem:cprod}.(\ref{lem:cprod.iii})}\\
&=~ \mathsf{d}\!\left[ \llangle f, g \rrangle \right] [ f, g, \overline{g}f ] \tag*{(\ref{diag:pair1})}
\end{align*}
So we have that: 
 \begin{equation}\begin{gathered}\label{def:fgjoin2}
f \vee g := \xymatrixcolsep{5pc}\xymatrix{ A \ar[r]^-{ \mathsf{d}\left[ \llangle f, g \rrangle \right] }  & A \oplus  A \oplus  A \ar[r]^-{[ f, g, \overline{g}f ]} & B }  \end{gathered}\end{equation}
Of course, since $f \smile g$, we also have that $f \vee g = \mathsf{d}\!\left[ \llangle f, g \rrangle \right] [ f, g, \overline{g}f ]$. From this, we then get that $f \vee g = \llangle f, g \rrangle [1_B, 1_B, \pi_1]$ as well. Next, we show that $f \vee g$ is an upper-bound of $f$ and $g$. So we compute: 
\begin{align*} 
\overline{f} (f \vee g) &=~ \overline{f} \llangle f, g \rrangle [ 1_B, 1_B, \pi_0]  \tag{\ref{def:fgjoin1}}\\
&=~ \llangle  \overline{f}f,  \overline{f}g \rrangle [ 1_B, 1_B, \pi_0] \tag*{(\ref{eq:pair-comp})} \\
&=~ \llangle f,  g \rrangle \overline{p_0} [ 1_B, 1_B, \pi_0] \tag*{\textbf{[R.1]} and Lemma \ref{lem:rest-product}.(\ref{lem:rest-product.ii})}  \\
&=~  \llangle f,  g \rrangle (1_A \oplus  0 \oplus  1_{A \times B}) [ 1_B, 1_B, \pi_0] \tag*{Lemma \ref{lem:cprod}.(\ref{lem:cprod.i})} \\
&=~  \llangle f,  g \rrangle [ 1_B, 0, \pi_0] \tag*{(\ref{eq:copair-comp})} \\ 
&=~ \llangle f,g \rrangle p_0 \tag*{Def. of $p_0$}\\
&=~ f \tag*{(\ref{diag:pair2})}
\end{align*}
So $f \leq f \vee g$ and similarly, we can show that $g \leq f \vee g$. Now suppose that we have a map $h: A \to B$ such that $f \leq h$ and $g \leq h$. Then we compute: 
\begin{align*} \overline{f \vee g} h &=~ \overline{ \llangle f, g \rrangle [1_B, 1_B, \pi_0] } h \tag*{Def. of $f \vee g$} \\
&=~ \overline{ \overline{\llangle f, g \rrangle} \llangle f, g \rrangle [1_B, 1_B, \pi_0] } h \tag*{\textbf{[R.1]}}\\
&=~ \overline{\llangle f, g \rrangle}~ \overline{  \llangle f, g \rrangle [1_B, 1_B, \pi_0] } h \tag*{\textbf{[R.3]}}\\
&=~  \overline{  \llangle f, g \rrangle [1_B, 1_B, \pi_0] }~ \overline{\llangle f, g \rrangle} h \tag*{\textbf{[R.2]}}\\
&=~  \overline{ \mathsf{d}\!\left[ \llangle f, g \rrangle \right] [ f, g, \overline{g}f]  } \mathsf{d}\!\left[ \llangle f, g \rrangle \right] \left[ 1_A, 1_A, 1_A \right] h \tag*{(\ref{def:fgjoin2}) and \textbf{[D.1]}}\\
&=~ \mathsf{d}\!\left[ \llangle f, g \rrangle \right] \overline{[ f, g, \overline{g}f] } \left[ h, h, h \right] \tag*{\textbf{[R.4]} and (\ref{eq:copair-comp})}\\
&=~ \mathsf{d}\!\left[ \llangle f, g \rrangle \right] \left( \overline{f} \oplus  \overline{g} \oplus  \overline{\overline{g}f} \right) \left[ h, h, h \right]  \tag*{(\ref{eq:rest-coprod}) and \textbf{[R.3]}} \\
&=~ \mathsf{d}\!\left[ \llangle f, g \rrangle \right] \left[ \overline{f}h, \overline{g}h, \overline{g}\overline{f}h \right]  \tag*{(\ref{eq:copair-comp})}\\ 
&=~ \mathsf{d}\!\left[ \llangle f, g \rrangle \right] \left[ f, g, \overline{g}f \right] \tag*{$f\leq h$ and $g\leq h$}\\ 
&=~ f \vee g
\end{align*}
So $f \vee g \leq h$. Thus we conclude that $f \vee g$ is indeed the join of $f$ and $g$. Next, for any map $k: A^\prime \to A$, we compute:  
\begin{align*} k(f \vee g) &=~ k \llangle f, g \rrangle [ 1_B, 1_B, \pi_0] \tag*{Def.  of $f\vee g$} \\
&=~ \llangle kf, kg \rrangle [ 1_B, 1_B, 0] \tag*{(\ref{eq:pair-comp})} \\
&=~ kf \vee kg \tag*{Def.  of $kf\vee kg$}
\end{align*}
So we conclude that $\mathbb{X}$ is indeed a join restriction category. Now for relative complements, let $f: A \to B$ and ${g: A \to B}$ be maps such that $f \leq g$. Define $g \backslash f: A \to B$ as follows: 
 \begin{equation}\begin{gathered}\label{def:fgcomp}
g \backslash f := \xymatrixcolsep{5pc}\xymatrix{ A \ar[r]^-{ \llangle f, g \rrangle }  & B \oplus  B \oplus  (B \times B) \ar[r]^-{\iota^\circ_1} & B }  \end{gathered}\end{equation}
We first show that $g \backslash f$ and $f$ are disjoint: 
\begin{align*}
\overline{f}(g \backslash f) &=~ \overline{f}  \llangle f, g \rrangle \iota^\circ_1 \tag*{Def. of $g \backslash f$} \\
&=~ \llangle \overline{f}f, \overline{f}g \rrangle \iota^\circ_1 \tag*{(\ref{eq:pair-comp})} \\
&=~  \llangle f,f \rrangle \iota^\circ_1 \tag*{\textbf{[R.1]} and $f \leq g$} \\
&=~ \langle f,f \rangle \iota_2 \iota^\circ_1 \tag*{Lemma \ref{lem:cprod}.(\ref{lem:cprod.v})} \\
&=~ 0 \tag*{Lemma \ref{lem:restcoprodzero}.(\ref{lem:restcoprodzero.iii})}
\end{align*}
So $g \backslash f \perp f$ as desired. To show that the join of $g \backslash f$ and $f$ is equal to $g$, we first have to compute a useful identity. So consider the composite $\llangle \iota^\circ_1, \iota^\circ_2 \pi_1 \rrangle [1_B, 1_B, \pi_0]$. We will show that this composite is equal to $p_1$ by precomposing it with the injection maps:
\begin{align*}
    \iota_0 \llangle \iota^\circ_1, \iota^\circ_2 \pi_1 \rrangle [1_B, 1_B, \pi_0] &=~ \llangle \iota_0\iota^\circ_1, \iota_0\iota^\circ_2 \pi_1 \rrangle [1_B, 1_B, \pi_0] \tag*{(\ref{eq:pair-comp})} \\
    &=~ \llangle 0, 0 \rrangle [1_B, 1_B, \pi_0] \tag*{Lemma \ref{lem:restcoprodzero}.(\ref{lem:restcoprodzero.iii})} \\
    &=~ 0 \tag*{Lemma \ref{lem:cprod}.(\ref{lem:cprod.vi})} 
\end{align*}
\begin{align*} \iota_1 \llangle \iota^\circ_1, \iota^\circ_2 \iota^\circ_2 \pi_1 \rrangle [1_B, 1_B, \pi_0] &=~ \llangle \iota_1\iota^\circ_1, \iota_1\iota^\circ_2 \pi_1 \rrangle [1_B, 1_B, \pi_0] \tag*{(\ref{eq:pair-comp})} \\
&=~  \llangle 1_B, 0 \rrangle [1_B, 1_B, \pi_0] \tag*{Lemma \ref{lem:restcoprodzero}.(\ref{lem:restcoprodzero.iii})} \\
    &=~ \iota_1 [1_B, 1_B, \pi_0] \tag*{Lemma \ref{lem:cprod}.(\ref{lem:cprod.iv})}  \\
    &=~ 1_B \tag*{Def. of $[-,-,-]$}
\end{align*}
\begin{align*} \iota_2 \llangle \iota^\circ_1, \iota^\circ_2 \pi_1 \rrangle [1_B, 1_B, \pi_0] &=~ \llangle \iota_2\iota^\circ_1, \iota_2\iota^\circ_2 \pi_1 \rrangle [1_B, 1_B, \pi_0] \tag*{(\ref{eq:pair-comp})} \\
&=~  \llangle 0, \pi_1 \rrangle [1_B, 1_B, \pi_0] \tag*{Lemma \ref{lem:restcoprodzero}.(\ref{lem:restcoprodzero.iii})} \\
    &=~ \pi_1 \iota_1 [1_B, 1_B, \pi_0] \tag*{Lemma \ref{lem:cprod}.(\ref{lem:cprod.iv})}  \\
    &=~ \pi_1 \tag*{Def. of $[-,-,-]$}
\end{align*}
So by the couniversal property of the coproduct, $\llangle \iota^\circ_1, p_1 \rrangle [1_B, 1_B, \pi_0] = [0,1_B, \pi_1]$. In other words:
\begin{align}\label{eq:p1useful}
    \llangle \iota^\circ_1, \iota^\circ_2 \pi_1 \rrangle [1_B, 1_B, \pi_0] = p_1
\end{align}
Using this identity, we then compute: 
\begin{align*}
g \backslash f \vee f &=~ \llangle g \backslash f, f \rrangle [1_B, 1_B, \pi_0] \tag*{Def. of $g \backslash f \vee f$} \\
&=~ \left \llangle \llangle f,g \rrangle\iota^\circ_1, \overline{f} g \right \rrangle [1_B, 1_B, \pi_0] \tag*{Def. of $g \backslash f$ and $f \leq g$} \\
&=~ \left \llangle \llangle f,g \rrangle\iota^\circ_1,  \llangle f, \overline{f}g \rrangle p_1  \right \rrangle [1_B, 1_B, \pi_0] \tag*{(\ref{diag:pair2})} \\
&=~  \left \llangle \llangle f,g \rrangle\iota^\circ_1,  \llangle f, g \rrangle \overline{p_0} p_1  \right \rrangle [1_B, 1_B, \pi_0] \tag*{Lemma \ref{lem:rest-product}.(\ref{lem:rest-product.ii})} \\ 
&=~   \llangle f,g \rrangle \left \llangle \iota^\circ_1,   \overline{p_0} p_1  \right \rrangle [1_B, 1_B, \pi_0] \tag*{(\ref{eq:pair-comp})} \\
&=~  \llangle f,g \rrangle \left \llangle \iota^\circ_1,   \iota^\circ_2 \pi_1   \right \rrangle [1_B, 1_B, \pi_0]  \tag*{Lemma \ref{lem:cprod}.(\ref{lem:cprod.ii})}  \\
&=~ \llangle f,g \rrangle p_1 \tag*{(\ref{eq:p1useful})} \\
&=~ g \tag*{(\ref{diag:pair2})}
\end{align*}
So we have that $g \backslash f$ is the relative complement of $f$ with respect to $g$. Thus we conclude that $\mathbb{X}$ is a classical restriction category. 
\end{proof}

So we conclude this section by stating the main result of this paper: 

\begin{theorem}\label{thm:cdrc} A distributive restriction category is classical if and only if it has classical products. 
\end{theorem}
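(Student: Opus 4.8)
The plan is to prove the two implications separately, since the statement is a biconditional. The forward direction---that every classical distributive restriction category has classical products---is exactly Proposition \ref{prop:classtoclassprod}, and the reverse direction---that every distributive restriction category with classical products is classical---is exactly Proposition \ref{prop:cptoc}. Thus the theorem follows immediately by combining these two results, and the only real work lies in establishing them.

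For the forward direction, the idea is that the classical structure (joins and relative complements) provides precisely the ingredients needed to construct the classical pairing. Given $f\colon C\to A$ and $g\colon C\to B$, one first checks that the three maps $\overline{g}^c f\iota_0$, $\overline{f}^c g\iota_1$, and $\langle f,g\rangle\iota_2$ are pairwise disjoint (using the complement identities of Lemma \ref{lem:ecomp}, especially $\overline{f}^c f=0$ together with $\overline{f}\,\overline{g}\,\overline{g}^c=0$), so that their join exists. Defining $\llangle f,g\rrangle$ to be this join, one then verifies $\llangle f,g\rrangle p_0=f$ and $\llangle f,g\rrangle p_1=g$ directly, and establishes uniqueness by computing the components $h\iota_j^\circ$ of any competitor $h$ and invoking the universal property of the restriction product together with the coproduct decomposition $1=\iota_0^\circ\iota_0\vee\iota_1^\circ\iota_1\vee\iota_2^\circ\iota_2$ from Lemma \ref{lem:restcoprodzero}.

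For the reverse direction, the classical product $A\&B=A\oplus B\oplus(A\times B)$ is used as a machine for producing joins and complements: the join of compatible maps $f,g\colon A\to B$ is defined as $\llangle f,g\rrangle[1_B,1_B,\pi_0]$, and the relative complement $g\setminus f$ (for $f\le g$) as $\llangle f,g\rrangle\iota_1^\circ$. The main obstacle is verifying that these definitions genuinely satisfy the join and relative-complement axioms; the crucial tool is the existence of decisions (Definition \ref{def:decision}), available because any distributive restriction category with restriction zeroes is extensive. Rewriting $f\vee g$ as $\mathsf{d}[\llangle f,g\rrangle][f,g,\overline{g}f]$ via axiom \textbf{[D.2]} is what makes the upper-bound and least-upper-bound verifications tractable, while axiom \textbf{[D.1]} is what allows the restriction $\overline{f\vee g}$ to be pushed through in the least-upper-bound argument. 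Preservation of joins by precomposition then follows from compatibility of the classical pairing with composition, and the complement axioms reduce to the quasi-projection identities of Lemma \ref{lem:restcoprodzero}.
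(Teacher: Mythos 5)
Your proposal is correct and takes exactly the paper's route: the theorem is obtained there as the immediate conjunction of Proposition \ref{prop:classtoclassprod} and Proposition \ref{prop:cptoc}, and your summaries match the paper's constructions in both directions (the classical pairing as the join $\overline{g}^c f\iota_0 \vee \overline{f}^c g\iota_1 \vee \langle f,g\rangle\iota_2$ of three pairwise disjoint maps; the join $\llangle f,g\rrangle[1_B,1_B,\pi_0]$ and relative complement $\llangle f,g\rrangle\iota^\circ_1$ verified via decisions and \textbf{[D.1]}, \textbf{[D.2]}). The only cosmetic slip is a citation: the join decomposition $1 = \iota^\circ_0\iota_0 \vee \iota^\circ_1\iota_1 \vee \iota^\circ_2\iota_2$ used for uniqueness is Lemma \ref{lem:class-coprod}, whereas Lemma \ref{lem:restcoprodzero} only gives its $\oplus$ form.
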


Recall that while a distributive restriction category always has $A \& B$ as a tensor product, it may not be a classical product.  An extreme example, already alluded to in the introduction and discussed further below, is an ordinary distributive category viewed as a trivial restriction category (i.e. with $\overline{f} = 1_A$ for all $f$).  This is not a classical restriction category unless it is essentially the final category.

A more subtle example of a distributive restriction category in which the tensor $A \& B$ is not a classical product is the category of topological spaces and partial continuous maps defined on open sets, $\mathsf{TOP}_\bullet$. This is a distributive restriction category with joins, however, it is not classical. Indeed, in $\mathsf{TOP}_\bullet$, the restriction idempotents of a topological space $X$ correspond to its open subsets $U \subseteq X$. As such, the composition of restriction idempotents corresponds to the intersection of opens, while the join will correspond to the union. So if $\mathsf{TOP}_\bullet$ was classical, then for each open subset $U \subseteq X$, there would exist another open subset $U^c \subseteq X$ such that $U \perp U^c$, which means that $U \cap U^c = \emptyset$, and $U \cup U^c = X$. Clearly, this means that $U^c$ must be the set-theoretic complement of $U$. However, the complement of an open subset is not necessarily open. Therefore, not every restriction idempotent in $\mathsf{TOP}_\bullet$ has a complement, and so $\mathsf{TOP}_\bullet$ is not classical. Thus, $\&$ is not a product in $\mathsf{TOP}_\bullet$. 

It is worth mentioning that if we instead consider $\mathsf{TOP}^{clopen}_\bullet$, the subcategory of partial continuous functions defined on \emph{clopen} sets, then $\mathsf{TOP}^{clopen}_\bullet$ is a classical distributive restriction category, and so $\&$ is a product in $\mathsf{TOP}^{clopen}_\bullet$. This leads to another interesting example $\mathsf{STONE}^{clopen}_\bullet$, the category of Stone spaces (which recall are totally disconnected compact Hausdorff topological spaces) and partial continuous functions defined on clopen sets. Then $\mathsf{STONE}^{clopen}_\bullet$ is also a classical distributive restriction category and, thanks to Stone duality, we also have that the opposite category of Boolean algebras and maps which preserves meets, joins, and the bottom element (but not necessarily the top element) is also a classical distributive restriction category. 

\section{Classical Classification}\label{sec:classified}

In this section, we show that classical distributive restriction categories are in fact precisely the Kleisli categories of the exception monads of distributive categories. For a more in-depth introduction to distributive categories, we invite the reader to see \cite{carboni1993distributive,cockett1993distributive}.  

By \textbf{distributive category} \cite[Sec 3]{cockett1993distributive}, we mean a category $\mathbb{D}$ with finite products and finite coproducts which distribute in the same sense as in Def \ref{def:drc}. From the point of view of restriction categories, a distributive category is a distributive restriction category $\mathbb{D}$ with a trivial restriction, so $\overline{f} =1_A$ and every map is total. As such, for a distributive category $\mathbb{D}$, we will use $\times$ for the product and $\mathsf{1}$ for the terminal object. As noted above, a distributive category (seen as a trivial restriction category) does not have classical products unless the category is trivial.                                                                                                                                                                                                                   The subcategory of total maps of a distributive restriction category, on the other hand, is always a distributive category: 

\begin{lemma} \cite[Prop 5.7]{cockett2007restriction} For a distributive restriction category $\mathbb{X}$, $\mathcal{T}[\mathbb{X}]$ is a distributive category. 
\end{lemma}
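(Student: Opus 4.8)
The plan is to show that each piece of the distributive structure on $\mathbb{X}$ restricts to the corresponding honest (non-lax) structure on $\mathcal{T}[\mathbb{X}]$. First I would recall the standard fact that total maps are closed under composition and contain all identities, so $\mathcal{T}[\mathbb{X}]$ is genuinely a subcategory. Next I would check that the restriction terminal object $\mathsf{1}$ is an ordinary terminal object in $\mathcal{T}[\mathbb{X}]$: the map $t_A : A \to \mathsf{1}$ is total by Def. \ref{def:restprod}, and $t_{\mathsf{1}} = 1_{\mathsf{1}}$ by uniqueness of total maps into $\mathsf{1}$, so for any total $f : A \to \mathsf{1}$ equation (\ref{diag:terminal}) gives $f = f t_{\mathsf{1}} = \overline{f}\, t_A = t_A$. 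Hence $t_A$ is the unique total map into $\mathsf{1}$.

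For the product, I would verify that the restriction product $A \times B$, with projections $\pi_0,\pi_1$ (total by Def. \ref{def:restprod}), is an ordinary product in $\mathcal{T}[\mathbb{X}]$. Given total $f : C \to A$ and $g : C \to B$, the pairing $\langle f,g\rangle$ is total because $\overline{\langle f,g\rangle} = \overline{f}\,\overline{g} = 1_C$ by (\ref{eq:rest-pair}), and the defining equations (\ref{diag:pair1}) collapse to $\langle f,g\rangle\pi_0 = f$ and $\langle f,g\rangle\pi_1 = g$ since $\overline{f} = \overline{g} = 1_C$. Uniqueness within $\mathcal{T}[\mathbb{X}]$ is inherited from the uniqueness clause of the restriction product in $\mathbb{X}$: any total $h$ with $h\pi_0 = f$ and $h\pi_1 = g$ satisfies exactly the equations defining $\langle f,g\rangle$ and so equals it. The coproduct is easier, since restriction coproducts are already coproducts in $\mathbb{X}$ with total injections; the copairing $[f,g]$ of total maps is total by (\ref{eq:rest-coprod}) because $\overline{[f,g]} = \overline{f}\oplus\overline{g} = 1$, and uniqueness again descends from $\mathbb{X}$. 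Thus $\oplus$ and the initial object $\mathsf{0}$ furnish finite coproducts in $\mathcal{T}[\mathbb{X}]$.

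Finally, for distributivity I would note that the canonical comparison maps of Def. \ref{def:drc} are assembled from total maps and are already isomorphisms in $\mathbb{X}$; since every isomorphism is monic and hence total by Lemma \ref{lem:rest}.(\ref{lem:rest.total}), both these maps and their inverses lie in $\mathcal{T}[\mathbb{X}]$, so they remain isomorphisms there. Because the product and coproduct in $\mathcal{T}[\mathbb{X}]$ coincide with the restriction product and restriction coproduct of $\mathbb{X}$, the comparison map formed inside $\mathcal{T}[\mathbb{X}]$ is literally this same map, and distributivity transfers. The main obstacle — indeed the only genuinely nontrivial point — is confirming that the \emph{lax} restriction product becomes a \emph{strict} product once we pass to total maps; this rests precisely on the facts that the restriction of a pairing is the composite of the restrictions and that totality of $f$ and $g$ erases the restriction factors $\overline{g}$ and $\overline{f}$ appearing in (\ref{diag:pair1}), so that the lax universal property sharpens to the ordinary one.
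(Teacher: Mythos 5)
Your proof is correct. The paper offers no proof of its own here—it simply cites Prop.\ 5.7 of Cockett--Lack's \emph{Restriction categories III}—and your argument (totality of pairings and copairings collapses the lax universal properties of $\times$ and $\oplus$ to strict ones on $\mathcal{T}[\mathbb{X}]$, and the distributivity comparison maps together with their inverses are total since isomorphisms are monic and monics are total) is exactly the standard argument behind the cited result.
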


For a distributive category $\mathbb{D}$, its \textbf{exception monad} (also sometimes called the maybe monad) is the monad $\_ \oplus \mathsf{1}$, where the unit is the injection $\iota_0: A \to A \oplus \mathsf{1}$ and the multiplication is $[\iota_0, \iota_1, \iota_1]: A  \oplus \mathsf{1}  \oplus \mathsf{1} \to A  \oplus \mathsf{1}$. It is well established that the Kleisli category of the exception monad $\mathbb{D}_{\_ \oplus \mathsf{1}}$ is a restriction category, in fact, it is a distributive restriction category with restriction zeroes. To help us distinguish between maps in the base category and maps in the Kleisli category, we will use interpretation brackets, which are functions on homsets $\llbracket - \rrbracket: \mathbb{D}_{\_ \oplus \mathsf{1}}(A, B) \to \mathbb{D}(A,B \oplus \mathsf{1})$. So a Kleisli map from $A$ to $B$ will be written as $\llbracket f \rrbracket: A \to B \oplus \mathsf{1}$. To demonstrate how these interpretation brackets are useful, here is how to express identity maps and composition in the Kleisli category: 
\begin{align}
    \llbracket 1_A \rrbracket= \iota_0 && \llbracket fg \rrbracket = \llbracket f \rrbracket \left[ \llbracket g \rrbracket , 1_{\mathsf{1}} \right] 
\end{align}
Let us now describe how the Kleisli category of the exception monad is a distributive restriction category with restriction zeroes. 

 \begin{proposition}\label{d+1rest} \cite[Ex 5.4]{cockett2007restriction} Let $\mathbb{D}$ be a distributive category. Then $\mathbb{D}_{\_ \oplus \mathsf{1}}$ is a distributive restriction category with restriction zeroes where: 
     \begin{enumerate}[{\em (i)}]
    \item For a Kleisli map $\llbracket f \rrbracket: A \to B \oplus \mathsf{1}$, its restriction $\left \llbracket~ \overline{f}~ \right \rrbracket: A \to A \oplus \mathsf{1}$ is defined as the following composite: 
     \begin{equation}\begin{gathered}\label{def:rest+1}
\begin{array}[c]{c} \left \llbracket ~\overline{f}~ \right \rrbracket \end{array}
 :=  \begin{array}[c]{c} \xymatrixcolsep{4pc}\xymatrixrowsep{1pc}\xymatrix{ A \ar[r]^-{ \left\langle 1_A, \llbracket f \rrbracket  \right\rangle }  & A \times (B \oplus \mathsf{1}) \cong (A \times B) \oplus (A \times \mathsf{1})  \ar[r]^-{\pi_0 \oplus \pi_1} & A \oplus \mathsf{1} } \end{array}  \end{gathered}\end{equation}
 \item A Kleisli map $\llbracket f \rrbracket: A \to B \oplus \mathsf{1}$ is total if and only if $\llbracket f \rrbracket = g \iota_0$ for some (necessarily unique) map $g: A \to B$ in $\mathbb{D}$. Therefore, we have an isomorphism $\mathcal{T}[\mathbb{D}_{\_ \oplus \mathsf{1}}] \cong \mathbb{D}$.  
 \item The restriction terminal object is the terminal object $\mathsf{1}$, and $\llbracket t_A \rrbracket: A \to \mathsf{1} \oplus \mathsf{1}$ is defined as the composite $\llbracket t_A \rrbracket = t_A \iota_0$. 
 \item The restriction product is the product $A \times B$ where the projections $\llbracket \pi_0 \rrbracket: A \times B \to A \oplus \mathsf{1}$ and $\llbracket \pi_1 \rrbracket: A \times B \to B \oplus \mathsf{1}$ are defined as the composites $\llbracket \pi_j \rrbracket = \pi_j \iota_0$, and where the pairing of Kleisli maps $\llbracket f \rrbracket: C \to A \oplus \mathsf{1}$ and $\llbracket g \rrbracket: C \to B \oplus \mathsf{1}$ is the Kleisli map $\llbracket \langle f,g \rangle \rrbracket: C \to (A \times B) \oplus \mathsf{1}$ defined as follows: 
  \begin{equation}\begin{gathered}\label{def:pair+1}
\begin{array}[c]{c} \llbracket \langle f,g \rangle \rrbracket \end{array}
 :=  \begin{array}[c]{c} \xymatrixcolsep{5pc}\xymatrixrowsep{1pc}\xymatrix{ C \ar[r]^-{ \left\langle \llbracket f \rrbracket, \llbracket g \rrbracket \right\rangle }  & ( A \oplus \mathsf{1}) \times (B \oplus \mathsf{1}) } \\
\xymatrixcolsep{5pc}\xymatrixrowsep{1pc}\xymatrix{  \cong A \oplus B \oplus (A \times B) \oplus \mathsf{1} \ar[r]^-{\left[t_A \iota_1, t_B \iota_1, \iota_0, \iota_1 \right]} & (A \times B) \oplus \mathsf{1} } \end{array}  \end{gathered}\end{equation}
 \item The restriction initial object is the initial object $\mathsf{0}$ where $\llbracket z_A \rrbracket: \mathsf{0} \to A \oplus \mathsf{1}$ is defined as $\llbracket z_A \rrbracket := z_{A \oplus \mathsf{1}}$.
 \item The restriction coproduct is the coproduct $A_0 ~ \oplus ~ \hdots ~ \oplus ~ A_n$ where the injections ${\llbracket \iota_j \rrbracket: A_j \to (A_0 \oplus \hdots \oplus A_n) \oplus \mathsf{1}}$ are defined as $\llbracket \iota_j \rrbracket := \iota_j \iota_1$, and where the copairing of Kleisli maps is given by the copairing in the base category, $\left \llbracket [f_0, \hdots, f_n] \right \rrbracket = \left [ \llbracket f_0 \rrbracket, \hdots, \llbracket f_n \rrbracket \right] $. 
 \item The restriction zero maps $\llbracket 0 \rrbracket: A \to B \oplus \mathsf{1}$ are defined as the composite $\llbracket 0 \rrbracket := t_A \iota_1$. 
\end{enumerate}
Therefore, $\mathbb{D}_{\_ \oplus \mathsf{1}}$ is also an extensive restriction category. 
 \end{proposition}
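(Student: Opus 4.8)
The plan is to verify each clause by pushing the assertion through the interpretation brackets $\llbracket - \rrbracket$ into an equation between ordinary maps of the distributive category $\mathbb{D}$, where all the relevant universal properties and the distributivity isomorphism are available. First I would record the ingredients that everything rests on: that $\_ \oplus \mathsf{1}$ is a monad with unit $\iota_0$ and multiplication $[\iota_0, \iota_1, \iota_1]$, so that Kleisli composition is $\llbracket fg \rrbracket = \llbracket f \rrbracket [\llbracket g \rrbracket, 1_{\mathsf{1}}]$ (as stated just before the proposition), together with the distributivity isomorphism $d \colon A \times (B \oplus C) \to (A \times B) \oplus (A \times C)$ inverse to the canonical map of Def \ref{def:drc}. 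With these fixed, each item below becomes a diagram chase in $\mathbb{D}$.

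The technical heart is the verification of the four restriction axioms \textbf{[R.1]}--\textbf{[R.4]} for the operator defined in (i). Here the useful simplification is that $\left\llbracket \overline{f} \right\rrbracket$ depends on $\llbracket f \rrbracket$ only through its \emph{domain predicate} $\llbracket f \rrbracket (t_B \oplus 1_{\mathsf{1}}) \colon A \to \mathsf{1} \oplus \mathsf{1}$: naturality of $d$ turns (\ref{def:rest+1}) into $\left\llbracket \overline{f} \right\rrbracket = \left\langle 1_A,\, \llbracket f \rrbracket (t_B \oplus 1_{\mathsf{1}}) \right\rangle d\, (\pi_0 \oplus \pi_1)$, the leg of $\pi_0 \oplus \pi_1$ over the first $A \times \mathsf{1}$ discarding $\mathsf{1}$ and the leg over the second discarding $A$. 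From this I would read off that $\overline{f}$ is a restriction idempotent, that restrictions commute (giving \textbf{[R.2]}), and that $\overline{\overline{g}f}$ and $\overline{g}\,\overline{f}$ have equal domain predicates (giving \textbf{[R.3]}). Axioms \textbf{[R.1]} and \textbf{[R.4]} are the genuinely computational ones: each requires forming a Kleisli composite with $\left\llbracket \overline{f} \right\rrbracket$ and then collapsing a copy of $A \times (B \oplus \mathsf{1})$ through $d$ and the multiplication $[\iota_0, \iota_1, \iota_1]$, with \textbf{[R.4]}, $f\overline{g} = \overline{fg}\,f$, the most delicate since both sides reshuffle a double coproduct.

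With the restriction fixed, the remaining clauses reduce to transporting universal properties from $\mathbb{D}$ across the brackets. For (ii) I would observe that $\llbracket f \rrbracket$ is total exactly when $\left\llbracket \overline{f} \right\rrbracket = \iota_0$, which by the support formula forces the predicate $\llbracket f \rrbracket (t_B \oplus 1_{\mathsf{1}})$ to be $t_A \iota_0$, equivalently $\llbracket f \rrbracket$ to factor as $g \iota_0$, yielding $\mathcal{T}[\mathbb{D}_{\_\oplus\mathsf{1}}] \cong \mathbb{D}$. Clauses (iii), (v), (vi), (vii) are then immediate from the corresponding (co)limits of $\mathbb{D}$, once one checks the stated injections/projections are total and that $z_{A \oplus \mathsf{1}}$ and $t_A \iota_1$ supply the initial and zero data with $\left\llbracket \overline{0} \right\rrbracket = t_A \iota_1 = \llbracket 0 \rrbracket$. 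The restriction product (iv) is the single obstacle of real weight: verifying that the pairing (\ref{def:pair+1}) satisfies the lax equations (\ref{diag:pair1}) and is unique amounts to expanding the iterated distributivity isomorphism $(A \oplus \mathsf{1}) \times (B \oplus \mathsf{1}) \cong A \oplus B \oplus (A \times B) \oplus \mathsf{1}$ and checking that each of its four summands post-composes correctly with $\pi_j \iota_0$ and with $[t_A \iota_1, t_B \iota_1, \iota_0, \iota_1]$.

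Finally, distributivity of $\times$ over $\oplus$ requires no separate work beyond noting that the canonical maps of Def \ref{def:drc} in $\mathbb{D}_{\_\oplus\mathsf{1}}$ are the images under $\llbracket - \rrbracket$ of the distributivity isomorphisms of $\mathbb{D}$ (composed with the monad unit), hence remain isomorphisms. The closing assertion that $\mathbb{D}_{\_\oplus\mathsf{1}}$ is extensive is then not an extra obligation: having established that it is a distributive restriction category with restriction zeroes, extensivity follows from the quoted result (the proposition following Def \ref{def:decision}) that any such category is extensive. The main difficulty throughout is organizational rather than conceptual --- taming the nested distributivity isomorphisms --- so I would set up the isomorphism $(A \oplus \mathsf{1}) \times (B \oplus \mathsf{1}) \cong A \oplus B \oplus (A \times B) \oplus \mathsf{1}$ once, with named component injections, and reuse it in (i), (iv), and \textbf{[R.4]}.
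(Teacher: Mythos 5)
First, a point of comparison that matters for how your outline should be judged: the paper itself offers no argument for this proposition --- it is stated with a citation to the literature and no proof --- so your attempt has to stand entirely on its own. Its overall shape is sensible, and clauses (iii), (v), (vi), (vii) and the final extensivity remark do reduce to routine transport along $\llbracket - \rrbracket$ as you say. The serious problem is in clause (ii). You pass from ``the predicate $\llbracket f \rrbracket (t_B \oplus 1_{\mathsf{1}})$ equals $t_A \iota_0$'' to ``$\llbracket f \rrbracket$ factors as $g\iota_0$'' with the word ``equivalently'', but that equivalence is exactly the requirement that the commuting square with edges $\iota_0 \colon B \to B \oplus \mathsf{1}$, $t_B \colon B \to \mathsf{1}$, $t_B \oplus 1_{\mathsf{1}} \colon B \oplus \mathsf{1} \to \mathsf{1} \oplus \mathsf{1}$ and $\iota_0 \colon \mathsf{1} \to \mathsf{1} \oplus \mathsf{1}$ be a pullback. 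That is an extensivity-type property, not a consequence of distributivity, and it genuinely fails under the stated hypotheses: take $\mathbb{D}$ to be the two-element chain $\mathsf{0} \le \mathsf{1}$, a distributive category with meets as products and joins as coproducts. There $B \oplus \mathsf{1} = \mathsf{1}$ for every $B$, every Kleisli hom-set is a singleton, and formula (\ref{def:rest+1}) makes every Kleisli map total; yet the total Kleisli map $\mathsf{1} \to \mathsf{0}$ is not of the form $g \iota_0$, since $\mathbb{D}(\mathsf{1},\mathsf{0}) = \emptyset$. So no proof of the step you assert can exist; clause (ii), and with it $\mathcal{T}[\mathbb{D}_{\_\oplus\mathsf{1}}] \cong \mathbb{D}$, holds only under an additional hypothesis (e.g.\ that $\mathbb{D}$ is extensive, where the square above is a pullback), and a correct treatment would have to isolate that hypothesis rather than elide it.

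Second, a fixable but real gap in clause (i). For \textbf{[R.1]} and \textbf{[R.4]} you propose to ``collapse a copy of $A \times (B \oplus \mathsf{1})$ through $d$ and the multiplication'', but naturality of $d$ together with coherence cannot finish this chase: \textbf{[R.1]} reduces to replacing the map $\pi_0 \llbracket f \rrbracket$ by $\pi_1 \iota_0$ on the summand $A \times B$ after precomposition with the graph $\langle 1_A, \llbracket f \rrbracket \rangle d$, and no naturality square performs that substitution. The missing ingredient is the interaction of diagonals with distribution: by the couniversal property of $B \oplus \mathsf{1}$, the diagonal $\Delta_{B \oplus \mathsf{1}}$ followed by the fourfold distribution of $(B \oplus \mathsf{1}) \times (B \oplus \mathsf{1})$ equals $\left[ \Delta_B \, \iota_{B \times B}, \; \Delta_{\mathsf{1}} \, \iota_{\mathsf{1} \times \mathsf{1}} \right]$, i.e.\ it misses the two mixed summands. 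Combining this with $\langle 1_A, \llbracket f \rrbracket \rangle (\llbracket f \rrbracket \times 1_{B \oplus \mathsf{1}}) = \llbracket f \rrbracket \Delta_{B \oplus \mathsf{1}}$ and naturality of $d$ does yield \textbf{[R.1]} and \textbf{[R.4]}; equivalently, this lemma says that $\_ \oplus \mathsf{1}$ is an equational lifting monad in the sense of Bucalo, F\"uhrmann and Simpson, whose Kleisli categories are restriction categories. That identity, rather than the bookkeeping of nested distributivity isomorphisms, is the actual mathematical content of clause (i), and your outline does not contain it.
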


We will now show that $\mathbb{D}_{\_ \oplus \mathsf{1}}$ is also classical, which is a novel observation. To do so, we will explain why $\mathbb{D}_{\_ \oplus \mathsf{1}}$ has classical products. For starters, it is already known that the Kleisli category of the exception monad has products. Indeed, since by distributivity we have that $(A \times B) \oplus \mathsf{1} \cong A \oplus B \oplus (A \times B) \oplus \mathsf{1}$, it follows that products in $\mathbb{D}_{\_ \oplus \mathsf{1}}$ are of the form $A \oplus B \oplus (A \times B)$. More explicitly: 

 \begin{lemma}\label{lem:prod+1} \cite[Prop 3.4]{cockett1997weakly} Let $\mathbb{D}$ be a distributive category. Then $\mathbb{D}_{\_ \oplus \mathsf{1}}$ has finite products where: 
          \begin{enumerate}[{\em (i)}]
          \item The terminal object is $\mathsf{0}$ and where $\llbracket !_A \rrbracket: A \to \mathsf{0} \oplus \mathsf{1}$ is defined as $\llbracket !_A \rrbracket := t_A \iota_1$.
 \item The binary product $\&$ is defined as $A \& B := A \oplus B \oplus (A \times B)$ and where the projections $\llbracket p_0 \rrbracket: A \oplus B \oplus (A \times B) \to A \oplus \mathsf{1}$ and $\llbracket p_1 \rrbracket: A \oplus B \oplus (A \times B) \to B \oplus \mathsf{1}$ defined as:
 \begin{align}
     \llbracket p_0 \rrbracket = [\iota_0, t_B \iota_1, \pi_0 \iota_0] &&  \llbracket p_0 \rrbracket = [t_A \iota_1, \iota_0, \pi_1 \iota_0] 
 \end{align}
The pairing of Kleisli maps $\llbracket f \rrbracket: C \to A \oplus \mathsf{1}$ and $\llbracket g \rrbracket: C \to B \oplus \mathsf{1}$ is the Kleisli map $\llbracket \llangle f,g \rrangle \rrbracket: C \to \left(A \oplus B \oplus (A \times B) \right) \oplus \mathsf{1}$ defined as follows: 
  \begin{equation}\begin{gathered}\label{def:cpair+1}
\begin{array}[c]{c} \llbracket \llangle f,g \rrangle \rrbracket \end{array}
 :=  \begin{array}[c]{c} \xymatrixcolsep{5pc}\xymatrixrowsep{1pc}\xymatrix{ C \ar[r]^-{ \left\langle \llbracket f \rrbracket, \llbracket g \rrbracket \right\rangle }  & ( A \oplus \mathsf{1}) \times (B \oplus \mathsf{1}) \cong A \oplus B \oplus (A \times B) \oplus \mathsf{1} } \end{array}  \end{gathered}\end{equation}
\end{enumerate}
 \end{lemma}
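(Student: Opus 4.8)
The plan is to reduce the product universal property in $\mathbb{D}_{\_ \oplus \mathsf{1}}$ to the product universal property already available in the base distributive category $\mathbb{D}$, using the defining distributivity isomorphism together with the Kleisli composition formula $\llbracket fg \rrbracket = \llbracket f \rrbracket [\llbracket g \rrbracket, 1_\mathsf{1}]$ recorded above.

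First I would dispatch the terminal object. A Kleisli map $C \to \mathsf{0}$ is by definition a $\mathbb{D}$-map $C \to \mathsf{0} \oplus \mathsf{1}$, and since $\mathsf{0}$ is initial in $\mathbb{D}$ the injection $\iota_1 \colon \mathsf{1} \to \mathsf{0} \oplus \mathsf{1}$ is an isomorphism. Hence $\mathbb{D}(C, \mathsf{0} \oplus \mathsf{1}) \cong \mathbb{D}(C, \mathsf{1})$ is a singleton, so there is a unique Kleisli map $C \to \mathsf{0}$, namely $\llbracket !_C \rrbracket = t_C \iota_1$, and $\mathsf{0}$ is terminal.

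For the binary product, write $\delta \colon (A \oplus \mathsf{1}) \times (B \oplus \mathsf{1}) \xrightarrow{\cong} (A \& B) \oplus \mathsf{1}$ for the distributivity isomorphism of (\ref{def:cpair+1}), so that by definition $\llbracket \llangle f, g \rrangle \rrbracket = \langle \llbracket f \rrbracket, \llbracket g \rrbracket \rangle \delta$. The entire argument hinges on a single identity in $\mathbb{D}$, relating the classical projections to the genuine product projections $\pi_0, \pi_1$ of $(A \oplus \mathsf{1}) \times (B \oplus \mathsf{1})$:
\begin{align}\label{eq:keydelta}
    \delta^{-1}\pi_0 = [\llbracket p_0 \rrbracket, 1_\mathsf{1}] && \delta^{-1}\pi_1 = [\llbracket p_1 \rrbracket, 1_\mathsf{1}].
\end{align}
I would verify (\ref{eq:keydelta}) by precomposing both sides with the four coproduct injections of $A \oplus B \oplus (A \times B) \oplus \mathsf{1}$ and unfolding the explicit form of $\delta^{-1}$: it sends the summands $A$, $B$, $A \times B$, $\mathsf{1}$ respectively to $\langle \iota_0, t_A \iota_1 \rangle$, $\langle t_B \iota_1, \iota_0 \rangle$, $\iota_0 \times \iota_0$, and $\langle \iota_1, \iota_1 \rangle$. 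Postcomposing with $\pi_0$ then reads off the copairing $[\iota_0, t_B \iota_1, \pi_0 \iota_0, \iota_1]$, which is precisely $[\llbracket p_0 \rrbracket, 1_\mathsf{1}]$, and symmetrically for $p_1$.

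Granting (\ref{eq:keydelta}), the rest is formal. The projection equation $\llangle f, g \rrangle p_0 = f$ unfolds via the Kleisli composition formula to $\langle \llbracket f \rrbracket, \llbracket g \rrbracket \rangle \delta [\llbracket p_0 \rrbracket, 1_\mathsf{1}]$; substituting $[\llbracket p_0 \rrbracket, 1_\mathsf{1}] = \delta^{-1}\pi_0$ collapses $\delta \delta^{-1}$ and leaves $\langle \llbracket f \rrbracket, \llbracket g \rrbracket \rangle \pi_0 = \llbracket f \rrbracket$ by the product property in $\mathbb{D}$, and likewise for $p_1$. For uniqueness, any $\llbracket h \rrbracket \colon C \to (A \& B) \oplus \mathsf{1}$ satisfying the two projection equations gives $(\llbracket h \rrbracket \delta^{-1})\pi_i$ equal to $\llbracket f \rrbracket$ and $\llbracket g \rrbracket$ respectively, whence $\llbracket h \rrbracket \delta^{-1} = \langle \llbracket f \rrbracket, \llbracket g \rrbracket \rangle$ by uniqueness of pairs in $\mathbb{D}$, so $\llbracket h \rrbracket = \langle \llbracket f \rrbracket, \llbracket g \rrbracket \rangle \delta = \llbracket \llangle f, g \rrangle \rrbracket$. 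The one genuinely laborious step is pinning down the explicit form of $\delta$ and checking (\ref{eq:keydelta}); this is the main obstacle, but it is pure bookkeeping with the distributivity coherence, after which the universal property falls out of the corresponding structure in $\mathbb{D}$.
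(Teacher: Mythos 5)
Your proof is correct. Note that the paper itself gives no proof of this lemma --- it is imported verbatim from \cite[Prop 3.4]{cockett1997weakly} --- so there is no in-paper argument to compare against; your route (reducing the universal property in $\mathbb{D}_{\_ \oplus \mathsf{1}}$ to the genuine product $(A \oplus \mathsf{1}) \times (B \oplus \mathsf{1})$ in $\mathbb{D}$ via the distributivity isomorphism $\delta$, with the whole weight carried by the componentwise identity $\delta^{-1}\pi_i = [\llbracket p_i \rrbracket, \iota_1]$) is exactly the expected argument, and your verification is complete: the component description of $\delta^{-1}$ as $[\langle \iota_0, t_A \iota_1\rangle, \langle t_B \iota_1, \iota_0\rangle, \iota_0 \times \iota_0, \langle \iota_1, \iota_1\rangle]$ is the canonical one, the terminal-object argument (that $\iota_1 \colon \mathsf{1} \to \mathsf{0} \oplus \mathsf{1}$ is invertible using only initiality of $\mathsf{0}$) is sound, and the uniqueness step correctly transports uniqueness of pairings in $\mathbb{D}$ along $\delta$. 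One cosmetic point: where you write $[\llbracket p_i \rrbracket, 1_{\mathsf{1}}]$ you are following the paper's abuse of notation for the Kleisli extension, whose second component is strictly the injection $\iota_1 \colon \mathsf{1} \to A \oplus \mathsf{1}$; this is consistent with how the paper writes Kleisli composition, but worth flagging so the types visibly check.
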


To show that we in fact have classical products, it remains to show that the projections are in fact the classical projections. 

 \begin{proposition}\label{prop:d+1class} Let $\mathbb{D}$ be a distributive category. Then $\mathbb{D}_{\_ \oplus \mathsf{1}}$ has classical products, and therefore, $\mathbb{D}_{\_ \oplus \mathsf{1}}$ is a classical distributive restriction category.
 \end{proposition}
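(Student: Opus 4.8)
The strategy is to leverage what is already established rather than verify the universal property of the product from scratch. By Lemma \ref{lem:prod+1}, we already know that $\mathbb{D}_{\_ \oplus \mathsf{1}}$ has finite products with binary product object $A \& B = A \oplus B \oplus (A \times B)$ and specified projections. Moreover, by Prop \ref{d+1rest}, $\mathbb{D}_{\_ \oplus \mathsf{1}}$ is a distributive restriction category with restriction zeroes. Therefore, to conclude that it has \emph{classical} products in the sense of Def \ref{def:classprod}, the only thing remaining is to check that the product projections appearing in Lemma \ref{lem:prod+1} literally coincide with the classical projections $p_0 = [1_A, 0, \pi_0]$ and $p_1 = [0, 1_B, \pi_1]$ built from the restriction structure. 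Once this identification is made, the product diagram of Lemma \ref{lem:prod+1} \emph{is} the classical product diagram.

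The core of the argument is thus a direct translation of the classical projections into interpretation-bracket form using the structure recorded in Prop \ref{d+1rest}. Recall from that proposition that the restriction coproduct copairing is computed componentwise in the base, $\left\llbracket [f_0, \dots, f_n] \right\rrbracket = \left[ \llbracket f_0 \rrbracket, \dots, \llbracket f_n \rrbracket \right]$, that $\llbracket 1_A \rrbracket = \iota_0$, that the restriction zero is $\llbracket 0 \rrbracket = t\iota_1$, and that the restriction product projections are $\llbracket \pi_j \rrbracket = \pi_j \iota_0$. Applying these to the classical projection $p_0 = [1_A, 0, \pi_0]$ gives
\[
\llbracket p_0 \rrbracket = \left[ \llbracket 1_A \rrbracket, \llbracket 0 \rrbracket, \llbracket \pi_0 \rrbracket \right] = \left[ \iota_0,\, t_B \iota_1,\, \pi_0 \iota_0 \right],
\]
which is exactly the first projection of Lemma \ref{lem:prod+1}. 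Symmetrically, $p_1 = [0, 1_B, \pi_1]$ translates to $\llbracket p_1 \rrbracket = \left[ t_A \iota_1,\, \iota_0,\, \pi_1 \iota_0 \right]$, matching the second projection of Lemma \ref{lem:prod+1}.

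Since the classical projections agree with the projections of the already-established product, the diagram of Def \ref{def:classprod}.(\ref{def:p_i}) is a product diagram, so $\mathbb{D}_{\_ \oplus \mathsf{1}}$ has classical products. Finally, since $\mathbb{D}_{\_ \oplus \mathsf{1}}$ is a distributive restriction category, Prop \ref{prop:cptoc} (equivalently Thm \ref{thm:cdrc}) applies to conclude that it is classical, yielding that $\mathbb{D}_{\_ \oplus \mathsf{1}}$ is a classical distributive restriction category. The only point requiring care is the bookkeeping in matching the interpretation-bracket computation against the explicit restriction structure; there is no genuine conceptual obstacle, as the entire proof reduces to the componentwise evaluation of copairings and the identification of the two sets of projections.
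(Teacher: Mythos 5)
Your proposal is correct and follows essentially the same route as the paper: both arguments reduce the claim to identifying the projections of Lemma \ref{lem:prod+1} with the classical projections $[1_A, 0, \pi_0]$ and $[0, 1_B, \pi_1]$ via the interpretation-bracket translation of Prop \ref{d+1rest} (identity, zero, restriction projections, and componentwise copairing), and then invoke Theorem \ref{thm:cdrc} to conclude classicality. The only difference is cosmetic: you translate the classical projections into bracket form, while the paper runs the same chain of equalities in the opposite direction.
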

 \begin{proof} We need only check that the projections $\llbracket p_0 \rrbracket$ and $\llbracket p_1 \rrbracket$ defined in Lemma \ref{lem:prod+1} are defined as the classical projections in Def \ref{def:classprod}. So we quickly check: 
 \begin{align*}
     \llbracket p_0 \rrbracket &=~  [\iota_0, t_B \iota_1, \pi_0 \iota_0] \tag*{Def. of $p_0$ in $\mathbb{D}_{\_ \oplus \mathsf{1}}$} \\
     &=~\left[ \llbracket 1_A \rrbracket, \llbracket 0 \rrbracket, \llbracket \pi_0 \rrbracket \right] \tag*{Def. of $1_A$, $0$, and $\pi_0$ in $\mathbb{D}_{\_ \oplus \mathsf{1}}$} \\
     &=~ \left \llbracket [1_A, 0, \pi_0] \right \rrbracket \tag*{Def. of $[-,-,-]$ in $\mathbb{D}_{\_ \oplus \mathsf{1}}$} 
 \end{align*}
 So $\llbracket p_0 \rrbracket = \left \llbracket [1_A, 0, \pi_0] \right \rrbracket$, and similarly we can easily show that $\llbracket p_1 \rrbracket = \left \llbracket [0, 1_B \pi_1] \right \rrbracket$. Therefore, $p_0$ and $p_1$ are indeed the classical projections in $\mathbb{D}_{\_ \oplus \mathsf{1}}$, so we conclude that $\mathbb{D}_{\_ \oplus \mathsf{1}}$ has classical products. Therefore, by Theorem \ref{thm:cdrc},  $\mathbb{D}_{\_ \oplus \mathsf{1}}$ is a classical distributive restriction category. 
 \end{proof}

 We now wish to show the converse that a classical distributive restriction category is the Kleisli category for an exception monad of a distributive category. To do so, we must first discuss classified restriction categories, which are restriction categories that arise as Kleisli categories. The key characteristic of a classified restriction category is that every map can be factored as a total map followed by a natural partial counit. This natural partial counit needs to be a \emph{restriction retraction} \cite[Sec 3.1]{cockett2003restriction}, that is, a map $f: A \to B$ such that there is another map $g: B \to A$ such that $gf =1_B$ and $fg = \overline{f}$. 

 \begin{definition}\label{def:restclass} A \textbf{classified restriction category} \cite[Sec 3.2]{cockett2003restriction} is a restriction category $\mathbb{X}$ such that for each object $A$, there is an object $\mathcal{R}(A)$, called the \textbf{classifier}, and a restriction retraction $\varepsilon_A: \mathcal{R}(A) \to A$, called the \textbf{classifying map}, such that for every map $f: A \to B$, there exists a unique total map $\mathcal{T}(f): A \to \mathcal{R}(B)$ such that the following diagram commutes: 
     \begin{equation}\begin{gathered}\label{diag:Rclassified} 
  \xymatrixcolsep{5pc}\xymatrix{ A \ar[dr]_-{\mathcal{T}(f)} \ar[rr]^-{f}  & & B \\
  & \mathcal{R}(B) \ar[ur]_-{\varepsilon_B} }  \end{gathered}\end{equation}
\end{definition}

The classifier induces a monad on the subcategory of total maps. The Kleisli category of this monad is not only a restriction category, but also isomorphic as a restriction category to the starting classified restriction category. By an isomorphism of restriction categories, we mean an isomorphism of categories which preserves the restriction. In the following proposition, we again use the special interpretation brackets for Kleisli categories. 

\begin{proposition}\label{prop:classkleisli} \cite[Prop 3.10]{cockett2003restriction} Let $\mathbb{X}$ be a classified restriction category, with classifier $\mathcal{R}$ and classifying map $\varepsilon$. Then there is a monad $(\mathcal{R}, \mu, \eta)$ on $\mathcal{T}[\mathbb{X}]$ defined as follows:
\begin{align}
    \mathcal{R}(A) = \mathcal{R}(A) && \mathcal{R}(f) = \mathcal{T}(\varepsilon_A f) && \mu_A = \mathcal{R}(\varepsilon_A) && \eta_A = \mathcal{T}(1_A)
\end{align}
    Furthermore, the Kleisli category $\mathcal{T}[\mathbb{X}]_\mathcal{R}$ is a restriction category where for a Kleisli map $\llbracket f \rrbracket: A \to \mathcal{R}(B)$, its restriction $\llbracket ~ \overline{f} ~ \rrbracket: A \to \mathcal{R}(A)$ is defined as $\llbracket ~ \overline{f} ~ \rrbracket := \mathcal{T}\left( \overline{\llbracket f \rrbracket \varepsilon_B} \right)$, where the restriction operator on the right-hand side is that of $\mathbb{X}$. Moreover, the functor $\mathcal{T}_\sharp: \mathbb{X} \to \mathcal{T}[\mathbb{X}]_\mathcal{R}$, which is defined as follows: 
    \begin{align}
        \mathcal{T}_\sharp(A) = A && \llbracket \mathcal{T}_\sharp(f) \rrbracket = \mathcal{T}(f)
    \end{align}
is an isomorphism of restriction categories with inverse $\mathcal{T}^{-1}_\sharp: \mathcal{T}[\mathbb{X}]_\mathcal{R} \to \mathbb{X}$ defined as follows: 
\begin{align}
    \mathcal{T}^{-1}_\sharp(A) = A && \mathcal{T}^{-1}_\sharp(f) = \llbracket f \rrbracket \varepsilon_B 
\end{align}
So $\mathbb{X} \cong \mathcal{T}[\mathbb{X}]_\mathcal{R}$. 
\end{proposition}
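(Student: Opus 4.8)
The plan is to treat the classified structure of Def \ref{def:restclass} as a natural bijection between arbitrary maps $A \to B$ and total maps $A \to \mathcal{R}(B)$, and to reduce every equation between total maps to an equation in $\mathbb{X}$ by post-composing with the classifying map $\varepsilon$. First I would record the toolbox that falls out of the universal property. Writing the factorization as $\mathcal{T}(f)\varepsilon_B = f$ with $\mathcal{T}(f)$ total, the three facts I need are: (P1) \emph{uniqueness}, i.e.\ a total map $h\colon A \to \mathcal{R}(B)$ equals $\mathcal{T}(f)$ iff $h\varepsilon_B = f$; in particular $\mathcal{T}(\varepsilon_A) = 1_{\mathcal{R}(A)}$ and $\mathcal{T}(h\varepsilon_B) = h$ for total $h$; (P2) \emph{absorption}, i.e.\ $g\mathcal{T}(f) = \mathcal{T}(gf)$ for total $g$, since $g\mathcal{T}(f)$ is total and $(g\mathcal{T}(f))\varepsilon_B = gf$; and (P3) \emph{naturality of} $\varepsilon$, i.e.\ $\mathcal{R}(k)\varepsilon_B = \varepsilon_A k$ for every $k\colon A \to B$, which is immediate from $\mathcal{R}(k) = \mathcal{T}(\varepsilon_A k)$. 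The guiding principle everywhere below is that two total maps into $\mathcal{R}(C)$ coincide as soon as they agree after post-composition with $\varepsilon_C$.

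Using (P1)--(P3) I would first check that $\mathcal{R}$ is a functor on $\mathcal{T}[\mathbb{X}]$: $\mathcal{R}(1_A) = \mathcal{T}(\varepsilon_A) = 1_{\mathcal{R}(A)}$, and $\mathcal{R}(k_1)\mathcal{R}(k_2) = \mathcal{T}(\varepsilon_A k_1)\mathcal{T}(\varepsilon_B k_2) = \mathcal{T}(\varepsilon_A k_1 k_2) = \mathcal{R}(k_1 k_2)$ by (P2) and the factorization. Then I would verify that $(\mathcal{R}, \mu, \eta)$ is a monad by discharging naturality of $\eta$ and $\mu$ together with the two unit laws and associativity. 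Each of these is an equation between total maps into some $\mathcal{R}(C)$, so by (P1) it suffices to post-compose with $\varepsilon_C$ and collapse the nested $\mathcal{R}$'s and $\varepsilon$'s via $\mathcal{T}(-)\varepsilon = (-)$ and (P3); for instance $\mu_A\varepsilon_A = \mathcal{R}(\varepsilon_A)\varepsilon_A = \varepsilon_{\mathcal{R}(A)}\varepsilon_A$ by (P3). Conceptually this is nothing but the adjunction $J \dashv \mathcal{R}$ between $\mathcal{T}[\mathbb{X}]$ and $\mathbb{X}$, with $J$ the inclusion of total maps, unit $\eta$, and counit $\varepsilon$; the monad and its laws are exactly the ones this adjunction induces, which is why the elementary verification is forced to succeed.

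Next I would treat $\mathcal{T}_\sharp$. On hom-sets, $f \mapsto \mathcal{T}(f)$ is a bijection with inverse $g \mapsto g\varepsilon_B$ by (P1), the round trips being $\mathcal{T}(f)\varepsilon_B = f$ and $\mathcal{T}(g\varepsilon_B) = g$. Identities are preserved since $\mathcal{T}(1_A) = \eta_A$. For composition I would compute the Kleisli composite $\mathcal{T}(f)\,\mathcal{R}(\mathcal{T}(h))\,\mu_C$ of $\mathcal{T}_\sharp(f)$ and $\mathcal{T}_\sharp(h)$ and identify it with $\mathcal{T}(fh)$ using (P1): post-composing with $\varepsilon_C$ gives $\mu_C\varepsilon_C = \varepsilon_{\mathcal{R}(C)}\varepsilon_C$, then $\mathcal{R}(\mathcal{T}(h))\varepsilon_{\mathcal{R}(C)} = \varepsilon_B\mathcal{T}(h)$ and $\mathcal{T}(h)\varepsilon_C = h$, so the whole composite collapses to $\mathcal{T}(f)\varepsilon_B h = fh$. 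Hence $\mathcal{T}_\sharp$ is an isomorphism of ordinary categories with the stated inverse. For the restriction, rather than checking \textbf{[R.1]}--\textbf{[R.4]} for the proposed Kleisli operator from scratch, I would transport the restriction of $\mathbb{X}$ along this iso, which automatically makes $\mathcal{T}[\mathbb{X}]_\mathcal{R}$ a restriction category and $\mathcal{T}_\sharp$ restriction-preserving; it then only remains to observe that the transported operator is the stated one, since the restriction of the Kleisli map with underlying total map $\mathcal{T}(f) = \llbracket \mathcal{T}_\sharp(f)\rrbracket$ is $\mathcal{T}(\overline{\mathcal{T}(f)\varepsilon_B}) = \mathcal{T}(\overline{f}) = \llbracket \mathcal{T}_\sharp(\overline{f})\rrbracket$, matching $\llbracket\,\overline{f}\,\rrbracket = \mathcal{T}(\overline{\llbracket f\rrbracket\varepsilon_B})$.

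The only genuine friction is the monad and Kleisli-composition bookkeeping at iterated classifiers such as $\mathcal{R}(\mathcal{R}(A))$, where naively the $\varepsilon$'s and $\mathcal{R}$'s proliferate. The device that defuses all of it is the uniqueness principle (P1) --- equal after $\varepsilon$ implies equal --- so the real work is to isolate (P1)--(P3) at the outset; after that no single computation is hard, and transporting the restriction along the established categorical isomorphism avoids re-proving the restriction axioms entirely.
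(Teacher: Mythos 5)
Your proposal is correct, but note that there is nothing in the paper to compare it against: Proposition \ref{prop:classkleisli} is stated without proof, being imported from \cite[Prop 3.10]{cockett2003restriction}. Judged on its own terms, your argument is complete. The toolbox (P1)--(P3) is precisely the content of the universal property in Definition \ref{def:restclass} (for (P2) one needs that composites of total maps are total, which follows from Lemma \ref{lem:rest}.(\ref{lem:rest.total.2}); one should also note that $\eta_A = \mathcal{T}(1_A)$ and $\mu_A = \mathcal{R}(\varepsilon_A) = \mathcal{T}(\varepsilon_{\mathcal{R}(A)}\varepsilon_A)$ are total by construction, so they really are data in $\mathcal{T}[\mathbb{X}]$), and your observation that the bijection $f \mapsto \mathcal{T}(f)$ exhibits $\mathcal{R}$ as right adjoint to the inclusion $\mathcal{T}[\mathbb{X}] \hookrightarrow \mathbb{X}$ with counit $\varepsilon$ explains why the monad laws and the identification of Kleisli composition cannot fail; your equational checks via (P1) are the elementary shadow of that adjunction. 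The genuinely laborsaving move is the transport of the restriction structure along the categorical isomorphism $\mathcal{T}_\sharp$: since the axioms \textbf{[R.1]}--\textbf{[R.4]} are purely equational, the transported operator $\llbracket g \rrbracket \mapsto \mathcal{T}_\sharp\left( \overline{\mathcal{T}^{-1}_\sharp(\llbracket g \rrbracket)} \right) = \mathcal{T}\left( \overline{\llbracket g \rrbracket \varepsilon_B} \right)$ automatically satisfies them, and this is verbatim the operator in the statement (here you use that $\mathcal{T}_\sharp$ is bijective on hom-sets, so every Kleisli map equals $\mathcal{T}_\sharp(f)$ for the unique $f = \llbracket g \rrbracket \varepsilon_B$). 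This bypasses the axiom-by-axiom verification on Kleisli maps that a direct proof would require, and it makes $\mathcal{T}_\sharp$ and its inverse restriction-preserving by construction, which is exactly the paper's notion of an isomorphism of restriction categories.
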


Requiring that a distributive restriction category is the Kleisli category of an exception monad, it needs to be classified with the classifier being of the form $\mathcal{R}(\_) = \_ \oplus \mathsf{1}$. As such, we introduce the notion of being classically classified. 

 \begin{definition}\label{def:classclass} A distributive restriction category $\mathbb{X}$ is said to be \textbf{classically classified} if $\mathbb{X}$ has restriction zeroes and for every map $f: A \to B$, there exists a unique total map $\mathcal{T}(f): A \to B \oplus  \mathsf{1}$ such that the following diagram commutes: 
 \begin{equation}\begin{gathered}\label{diag:classified} 
  \xymatrixcolsep{5pc}\xymatrix{ A \ar[dr]_-{\mathcal{T}(f)} \ar[rr]^-{f}  & & B \\
  & B \oplus  \mathsf{1} \ar[ur]_-{\iota^\circ_0} }  \end{gathered}\end{equation}
\end{definition}

The nomenclature is justified since we will show that being classically classified is equivalent to being classical (Prop \ref{prop:classified2}). It is straightforward to see that being classically classified is a special case of being classified and that the resulting monad is the exception monad on the subcategory of total maps. 

\begin{proposition}\label{prop:classified1} Let $\mathbb{X}$ be a distributive restriction category which is classically classified. Then $\mathbb{X}$ is classified, where $\mathcal{R}(A) = A \oplus \mathsf{1}$ and $\varepsilon_A = \iota^\circ_0$. Furthermore, the induced monad on $\mathcal{T}[\mathbb{X}]$ is precisely the exception monad $\_ \oplus \mathsf{1}$ and the induced restriction structure on $\mathcal{T}[\mathbb{X}]_{\_ \oplus \mathsf{1}}$ is precisely that defined in Prop \ref{d+1rest}. Therefore, $\mathbb{X}$ is restriction isomorphic to $\mathcal{T}[\mathbb{X}]_{\_ \oplus \mathsf{1}}$. 
\end{proposition}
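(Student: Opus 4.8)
The plan is to read off the classifier structure directly from the hypothesis, invoke Prop \ref{prop:classkleisli}, and then identify by hand both the resulting monad and the resulting restriction operator. First I would observe that being classically classified (Def \ref{def:classclass}) is literally the statement that $\mathbb{X}$ is classified (Def \ref{def:restclass}) with the choices $\mathcal{R}(A) = A \oplus \mathsf{1}$ and $\varepsilon_A = \iota^\circ_0 \colon A \oplus \mathsf{1} \to A$: the unique total factorization $\mathcal{T}(f)$ with $\mathcal{T}(f)\iota^\circ_0 = f$ demanded by (\ref{diag:classified}) is exactly the factorization through $\varepsilon_B$ demanded by (\ref{diag:Rclassified}). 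Hence Prop \ref{prop:classkleisli} applies verbatim, producing a monad $(\mathcal{R}, \mu, \eta)$ on $\mathcal{T}[\mathbb{X}]$ together with a restriction isomorphism $\mathbb{X} \cong \mathcal{T}[\mathbb{X}]_\mathcal{R}$.

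Next I would identify this monad with the exception monad, each identification following from the uniqueness of total factorizations: to prove $\mathcal{T}(g) = u$ for a candidate total map $u$, it suffices to check that $u$ is total and $u\iota^\circ_0 = g$. For $\eta_A = \mathcal{T}(1_A)$, I would note that $\iota_0$ is total with $\iota_0\iota^\circ_0 = 1_A$ (Lemma \ref{lem:restcoprodzero}.(\ref{lem:restcoprodzero.iii})), so $\eta_A = \iota_0$. For $\mathcal{R}(f) = \mathcal{T}(\iota^\circ_0 f)$ with $f$ total, I would compute $(f \oplus \mathsf{1})\iota^\circ_0 = [f, 0] = \iota^\circ_0 f$, giving $\mathcal{R}(f) = f \oplus \mathsf{1}$. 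For $\mu_A = \mathcal{R}(\varepsilon_A) = \mathcal{T}(\iota^\circ_0\iota^\circ_0)$, I would compute $\iota^\circ_0\iota^\circ_0 = [1_A, 0, 0]$ and $[\iota_0, \iota_1, \iota_1]\iota^\circ_0 = [1_A, 0, 0]$, giving $\mu_A = [\iota_0, \iota_1, \iota_1]$. Thus $\mathcal{R} = \_ \oplus \mathsf{1}$ is precisely the exception monad.

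Finally -- and this is the main obstacle -- I would match the restriction operator of Prop \ref{prop:classkleisli}, namely $\llbracket \overline{f} \rrbracket = \mathcal{T}\bigl(\overline{\llbracket f \rrbracket \iota^\circ_0}\bigr)$, against the composite $r$ defining $\llbracket \overline{f} \rrbracket$ in (\ref{def:rest+1}). Again by uniqueness of factorizations it suffices to show that $r$ is total and that $r\iota^\circ_0 = \overline{\llbracket f \rrbracket \iota^\circ_0}$. Totality is immediate since $r$ is a composite of maps in $\mathcal{T}[\mathbb{X}]$, whereas the equality $r\iota^\circ_0 = \overline{\llbracket f \rrbracket \iota^\circ_0}$ is the genuinely computational step, requiring one to unwind the distributivity isomorphism $A \times (B \oplus \mathsf{1}) \cong (A \times B)\oplus(A\times\mathsf{1})$ against the definition of $\langle 1_A, \llbracket f \rrbracket \rangle$ and the restriction product axioms (\ref{diag:pair1}); I expect this to be the most delicate part, though it can be checked against $\mathsf{PAR}$ for intuition, where $r$ sends an input to the $A$-summand exactly when $\llbracket f \rrbracket$ lands in $B$ and to the exception otherwise. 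Once the restriction operators agree, the remaining distributive restriction structure of $\mathcal{T}[\mathbb{X}]_{\_ \oplus \mathsf{1}}$ is that of Prop \ref{d+1rest} (its restriction (co)products being induced by the distributive category $\mathcal{T}[\mathbb{X}]$), and the restriction isomorphism of Prop \ref{prop:classkleisli} yields $\mathbb{X} \cong \mathcal{T}[\mathbb{X}]_{\_\oplus\mathsf{1}}$ as claimed.
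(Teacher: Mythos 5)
Your proposal is correct and takes essentially the same route as the paper, which states this proposition without a detailed proof, treating it as the observation that Definition \ref{def:classclass} is literally Definition \ref{def:restclass} with $\mathcal{R}(A) = A \oplus \mathsf{1}$ and $\varepsilon_A = \iota^\circ_0$, so that Proposition \ref{prop:classkleisli} applies; your identifications $\eta_A = \iota_0$, $\mathcal{R}(f) = f \oplus \mathsf{1}$, and $\mu_A = [\iota_0, \iota_1, \iota_1]$ via uniqueness of total factorizations are exactly right. The computation you defer does go through: writing $\delta$ for the inverse distributivity isomorphism, one checks $\delta[\pi_0, 0] = \overline{\pi_1 \iota^\circ_0}\,\pi_0$ by precomposing with the injections, whence $r\iota^\circ_0 = \left\langle 1_A, \llbracket f \rrbracket \right\rangle \overline{\pi_1 \iota^\circ_0}\,\pi_0 = \overline{\llbracket f \rrbracket \iota^\circ_0}$ by \textbf{[R.4]} and totality of $\llbracket f \rrbracket$, so there is no gap.
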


Here are our main classically classified examples:  

\begin{example} \normalfont  $\mathsf{PAR}$ is classically classified where for a partial function $f: X \to Y$, $\mathcal{T}(f): X \to Y \sqcup \lbrace \ast \rbrace$ is the total function defined as follows: 
\[ \mathcal{T}(f)(x) = \begin{cases} x & \text{if } f(x) \downarrow  \\
\ast & \text{if } f(x) \uparrow  \end{cases} \]
Therefore, since $\mathcal{T}\left[ \mathsf{PAR} \right] = \mathsf{SET}$, we recover the well-known result that $\mathsf{SET}_{\_ \sqcup \lbrace \ast \rbrace} \cong \mathsf{PAR}$. 
\end{example}

\begin{example} \normalfont  $k\text{-}\mathsf{CALG}^{op}_\bullet$ is classically classified, so $k\text{-}\mathsf{CALG}_\bullet$ is coclassically coclassified where for a non-untial $k$-algebra morphism $f: A \to B$, $\mathcal{T}(f): A \times k \to B$ is the $k$-algebra morphism defined as follows: 
\[  \mathcal{T}(f)(a,r) = f(a) + r - rf(1) \]
Therefore, since $\mathcal{T}[k\text{-}\mathsf{CALG}^{op}_\bullet] = k\text{-}\mathsf{CALG}^{op}$, we have that $k\text{-}\mathsf{CALG}^{op}_{\_ \times k} \cong k\text{-}\mathsf{CALG}^{op}_\bullet$. In other words, the coKleisli category of the comonad $\_ \times k$ on $k\text{-}\mathsf{CALG}$ is $k\text{-}\mathsf{CALG}_\bullet$, so $k\text{-}\mathsf{CALG}_{\_ \times k} \cong k\text{-}\mathsf{CALG}_\bullet$. 
\end{example}

Also observe that $\mathsf{TOP}^{clopen}_\bullet$ and $\mathsf{STONE}^{clopen}_\bullet$ are classically classified and are respectively the Kleisli categories of the exception monads on $\mathsf{TOP}$, the category of topological spaces and continuous functions, and $\mathsf{STONE}$, the category of Stone spaces and continuous functions. More generally of course, the Kleisli categories of exception monads are also classically classified: 

  \begin{proposition} \cite[Ex 3.18]{cockett2003restriction} Let $\mathbb{D}$ be a distributive category. Then $\mathbb{D}_{\_ \oplus \mathsf{1}}$ is classically classified, where for a Kleisli map $\llbracket f \rrbracket: A \to B \oplus \mathsf{1}$, $\llbracket \mathcal{T}(f) \rrbracket: A \to (B \oplus \mathsf{1}) \oplus \mathsf{1}$ is defined as $\llbracket \mathcal{T}(f) \rrbracket := \llbracket f \rrbracket \iota_0$. 
 \end{proposition}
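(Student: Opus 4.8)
The plan is to check the three requirements of Definition \ref{def:classclass} directly for the proposed assignment $\llbracket \mathcal{T}(f) \rrbracket := \llbracket f \rrbracket \iota_0$: that each $\mathcal{T}(f)$ is total, that $\mathcal{T}(f)\,\iota^\circ_0 = f$ in $\mathbb{D}_{\_ \oplus \mathsf{1}}$, and that $\mathcal{T}(f)$ is the unique total map with this property. The existence of restriction zeroes, which is part of Definition \ref{def:classclass}, is already supplied by Proposition \ref{d+1rest}, so the whole argument reduces to three short computations in the base category $\mathbb{D}$ carried out through the interpretation brackets.

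First I would record the base-category form of the classifying map $\varepsilon_B = \iota^\circ_0 \colon B \oplus \mathsf{1} \to B$. Writing $\iota^\circ_0 = [1_B, 0]$ in the Kleisli category and using that the Kleisli copairing is the copairing of $\mathbb{D}$, together with $\llbracket 1_B \rrbracket = \iota_0$ and $\llbracket 0 \rrbracket = \iota_1$ (the latter since $t_{\mathsf{1}} = 1_{\mathsf{1}}$), I obtain the key identity $\llbracket \iota^\circ_0 \rrbracket = [\iota_0, \iota_1] = 1_{B \oplus \mathsf{1}}$. This single identity is what makes the remaining steps collapse to triangle identities for the coproduct.

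Totality is then immediate: $\llbracket \mathcal{T}(f) \rrbracket = \llbracket f \rrbracket \iota_0$ is of the form $g\iota_0$, so $\mathcal{T}(f)$ is total by the characterization of total Kleisli maps in Proposition \ref{d+1rest}. For the factorization I compute the Kleisli composite via $\llbracket \mathcal{T}(f)\,\iota^\circ_0 \rrbracket = \llbracket \mathcal{T}(f) \rrbracket\,[\llbracket \iota^\circ_0 \rrbracket, \iota_1]$, where the second copairing leg is the exception injection $\iota_1$ coming from the monad multiplication; substituting the key identity turns this into $\llbracket f \rrbracket\,\iota_0\,[1_{B \oplus \mathsf{1}}, \iota_1]$, and $\iota_0\,[1_{B \oplus \mathsf{1}}, \iota_1] = 1_{B \oplus \mathsf{1}}$ leaves $\llbracket f \rrbracket$, i.e. $\mathcal{T}(f)\,\iota^\circ_0 = f$. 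For uniqueness, given any total $g$ with $g\,\iota^\circ_0 = f$, Proposition \ref{d+1rest} gives $\llbracket g \rrbracket = h\,\iota_0$ for a unique $h$ in $\mathbb{D}$; the identical computation yields $h = \llbracket g\,\iota^\circ_0 \rrbracket = \llbracket f \rrbracket$, whence $\llbracket g \rrbracket = \llbracket f \rrbracket\,\iota_0 = \llbracket \mathcal{T}(f) \rrbracket$ and so $g = \mathcal{T}(f)$. I do not anticipate a genuine obstacle here; the only delicate point is bookkeeping the several coproduct injections $\iota_0, \iota_1$ at the distinct objects $B \oplus \mathsf{1}$ and $(B \oplus \mathsf{1}) \oplus \mathsf{1}$ and applying the exception-monad multiplication $[1_{B \oplus \mathsf{1}}, \iota_1]$ with the injection at the correct codomain, after which both the factorization and its uniqueness are one-line coproduct computations.
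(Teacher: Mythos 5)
Your proof is correct. The paper itself offers no argument for this proposition --- it is imported by citation from Cockett--Lack's \emph{Restriction categories II} (their Example 3.18) --- so there is no in-paper proof to compare against; your direct verification of Definition \ref{def:classclass} is exactly the natural self-contained argument. The pivotal observation, that $\llbracket \iota^\circ_0 \rrbracket = [\llbracket 1_B \rrbracket, \llbracket 0 \rrbracket] = [\iota_0, \iota_1] = 1_{B \oplus \mathsf{1}}$ in $\mathbb{D}$, is right (using Prop.~\ref{d+1rest}: Kleisli copairing is base copairing, $\llbracket 1_B \rrbracket = \iota_0$, and $\llbracket 0 \rrbracket = t_{\mathsf{1}}\iota_1 = \iota_1$), and once it is in hand, totality of $\mathcal{T}(f)$ via the characterization $\llbracket g \rrbracket = h\iota_0$, the factorization $\mathcal{T}(f)\,\iota^\circ_0 = f$ via the triangle identity $\iota_0[1_{B\oplus\mathsf{1}}, \iota_1] = 1_{B\oplus\mathsf{1}}$, and uniqueness all follow exactly as you say. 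Your reading of the paper's Kleisli composition formula $\llbracket f \rrbracket[\llbracket g \rrbracket, 1_{\mathsf{1}}]$ as $\llbracket f \rrbracket[\llbracket g \rrbracket, \iota_1]$ (the paper's $1_{\mathsf{1}}$ being shorthand for the injection into the exception summand) is the correct one, and is the only point where a misreading could have derailed the computation.
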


As such, we obtain that a distributive restriction category is classically classified if and only if it is restriction equivalent to the Kleisli category of an exception monad. By a restriction equivalence, we mean an equivalence of category where the functors preserve the restriction and both the unit and counit of the adjunction are total. 

\begin{corollary}\label{cor:classclass} A distributive restriction category $\mathbb{X}$ is classically classified if and only if there is a distributive category $\mathbb{D}$ such that $\mathbb{X}$ is restriction equivalent to $\mathbb{D}_{\_ \oplus \mathsf{1}}$. 
\end{corollary}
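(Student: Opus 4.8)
The plan is to prove the two implications separately, using Proposition \ref{prop:classified1} for the forward direction and the fact that $\mathbb{D}_{\_ \oplus \mathsf{1}}$ is classically classified (\cite[Ex 3.18]{cockett2003restriction}), together with invariance under restriction equivalence, for the converse.

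For the forward implication, suppose $\mathbb{X}$ is classically classified and set $\mathbb{D} := \mathcal{T}[\mathbb{X}]$, which is a genuine distributive category by \cite[Prop 5.7]{cockett2007restriction}. By Proposition \ref{prop:classified1} the classifier is $\mathcal{R}(\_) = \_ \oplus \mathsf{1}$, the induced monad on $\mathcal{T}[\mathbb{X}]$ is exactly the exception monad, and $\mathbb{X}$ is restriction isomorphic to $\mathcal{T}[\mathbb{X}]_{\_ \oplus \mathsf{1}} = \mathbb{D}_{\_ \oplus \mathsf{1}}$. Since a restriction isomorphism is in particular a restriction equivalence (its unit and counit are identities, hence total), this exhibits the required distributive category together with the required restriction equivalence.

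For the converse, suppose there is a distributive category $\mathbb{D}$ and a restriction equivalence $F \colon \mathbb{X} \to \mathbb{D}_{\_ \oplus \mathsf{1}}$ with pseudo-inverse $G$ and total natural isomorphisms $GF \cong 1_{\mathbb{X}}$ and $FG \cong 1_{\mathbb{D}_{\_ \oplus \mathsf{1}}}$. By \cite[Ex 3.18]{cockett2003restriction} the category $\mathbb{D}_{\_ \oplus \mathsf{1}}$ is classically classified, so it suffices to transport this property back along the equivalence. I would first record the preservation facts: a restriction equivalence preserves restriction zeroes, the restriction terminal object $\mathsf{1}$, and restriction coproducts, and therefore also the quasi-projection $\iota^\circ_0$; in particular $\mathbb{X}$ inherits restriction zeroes and $F$ identifies $F(B \oplus \mathsf{1})$ with $FB \oplus \mathsf{1}$ compatibly with $\iota^\circ_0$. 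Then, for any $f \colon A \to B$ in $\mathbb{X}$, apply the classifying factorization of $\mathbb{D}_{\_ \oplus \mathsf{1}}$ to $F(f)$ to obtain the unique total map $\mathcal{T}(F(f))$ satisfying $\mathcal{T}(F(f)) \iota^\circ_0 = F(f)$, and pull it back along $G$ and the natural isomorphism $GF \cong 1_{\mathbb{X}}$ to define a total map $\mathcal{T}(f) \colon A \to B \oplus \mathsf{1}$. Totality survives because restriction functors preserve total maps and the coherence isomorphisms have total components, while the commuting triangle $\mathcal{T}(f) \iota^\circ_0 = f$ and the uniqueness of $\mathcal{T}(f)$ follow by transporting the corresponding statements back from $\mathbb{D}_{\_ \oplus \mathsf{1}}$.

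The forward direction is essentially a citation of Proposition \ref{prop:classified1}, so the main obstacle lies in the converse: checking that every ingredient in Definition \ref{def:classclass} --- restriction zeroes, the object $B \oplus \mathsf{1}$, the map $\iota^\circ_0$, totality of $\mathcal{T}(f)$, and uniqueness of the factorization --- is genuinely preserved and reflected by the restriction equivalence. None of these is deep, since each is a restriction-categorical notion specified by a universal property or an equation on the restriction operator; the care required is purely in assembling them, and especially in using that the natural isomorphisms witnessing the equivalence are total, so that they neither create nor destroy the totality of the classifying maps.
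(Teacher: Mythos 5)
Your proposal is correct and follows the same route the paper intends: the corollary is stated without a separate proof precisely because it assembles Proposition \ref{prop:classified1} (giving the restriction isomorphism $\mathbb{X} \cong \mathcal{T}[\mathbb{X}]_{\_ \oplus \mathsf{1}}$ with $\mathcal{T}[\mathbb{X}]$ distributive) for the forward direction and the fact that $\mathbb{D}_{\_ \oplus \mathsf{1}}$ is classically classified, transported along the restriction equivalence, for the converse. Your extra care in checking that restriction zeroes, restriction coproducts, $\iota^\circ_0$, totality, and uniqueness transfer along a restriction equivalence is exactly the routine verification the paper leaves implicit.
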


It follows that being classically classified implies that one has classical products: 

 \begin{proposition}\label{prop:classified2} A distributive restriction category which is classically classified is a classical distributive restriction category, and therefore has classical products. 
\end{proposition}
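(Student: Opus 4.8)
The plan is to obtain classicality for free from the preceding development, using the restriction isomorphism of Prop \ref{prop:classified1} together with the fact that Kleisli categories of exception monads have already been shown to be classical in Prop \ref{prop:d+1class}. The argument is short precisely because all the real work has been done: being classically classified pins $\mathbb{X}$ down as a Kleisli category up to restriction isomorphism, and that Kleisli category is already known to be classical.

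First I would invoke Prop \ref{prop:classified1}: since $\mathbb{X}$ is classically classified, it is restriction isomorphic to $\mathcal{T}[\mathbb{X}]_{\_ \oplus \mathsf{1}}$, the Kleisli category of the exception monad on its subcategory of total maps. By the cited result that the total maps of a distributive restriction category form a distributive category, $\mathcal{T}[\mathbb{X}]$ is an ordinary distributive category, so the exception monad $\_ \oplus \mathsf{1}$ is indeed available and Prop \ref{prop:d+1class} applies to it. That proposition tells us $\mathcal{T}[\mathbb{X}]_{\_ \oplus \mathsf{1}}$ has classical products and hence, via Theorem \ref{thm:cdrc}, is a classical distributive restriction category.

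Next I would transport classicality back across the restriction isomorphism. The point to check is that a restriction isomorphism $F \colon \mathbb{X} \to \mathcal{T}[\mathbb{X}]_{\_ \oplus \mathsf{1}}$ preserves all the derived order-theoretic structure: because $F$ preserves the restriction operator and is a bijection on each parallel homset, it preserves $\leq$, $\smile$, and $\perp$ (each of which is defined purely in terms of $\overline{(\cdot)}$ and composition), and consequently carries joins to joins and relative complements to relative complements in both directions, since the defining conditions \textbf{[J.1]}--\textbf{[J.3]} and \textbf{[RC.1]}--\textbf{[RC.2]} are themselves expressed solely through these relations and the restriction. Therefore $\mathbb{X}$, being restriction isomorphic to the classical category $\mathcal{T}[\mathbb{X}]_{\_ \oplus \mathsf{1}}$, admits all finite joins of compatible families and all relative complements, and so is a classical distributive restriction category. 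Applying Theorem \ref{thm:cdrc} one final time yields that $\mathbb{X}$ has classical products.

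The only step requiring genuine attention -- the main obstacle, modest as it is -- is verifying that the restriction isomorphism really does preserve joins and relative complements, and not merely the restriction idempotents. This is where one must note that a bijective-on-homsets restriction functor transports both the \emph{existence} and the \emph{uniqueness} of joins and complements, so that the classical structure of $\mathcal{T}[\mathbb{X}]_{\_ \oplus \mathsf{1}}$ descends faithfully to $\mathbb{X}$; everything else is a direct appeal to the results already established.
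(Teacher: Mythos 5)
Your proposal is correct and follows essentially the same route as the paper: invoke Prop \ref{prop:classified1} to get the restriction isomorphism $\mathbb{X} \cong \mathcal{T}[\mathbb{X}]_{\_ \oplus \mathsf{1}}$, invoke Prop \ref{prop:d+1class} to see that the latter is classical with classical products, and transfer the structure back across the isomorphism. The paper dismisses the transfer step with a single ``clearly,'' whereas you spell out why a restriction isomorphism preserves $\leq$, $\smile$, $\perp$, joins, and relative complements---a harmless (indeed welcome) elaboration of the same argument.
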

\begin{proof} Let $\mathbb{X}$ be a distributive restriction category and suppose that $\mathbb{X}$ is classically classified. Per Prop \ref{prop:classified1}, $\mathbb{X}$ is restriction isomorphic to $\mathcal{T}[\mathbb{X}]_{\_ \oplus \mathsf{1}}$. However by Prop \ref{prop:d+1class}, $\mathcal{T}[\mathbb{X}]_{\_ \oplus \mathsf{1}}$ is classical and has classical products. Clearly, a restriction isomorphism transfers being classical and having classical products from one to the other. Therefore, we conclude that $\mathbb{X}$ is classical and also has classical products. 
\end{proof}

Finally, we are also able to prove the converse: 

\begin{proposition} A classical distributive restriction category is classically classified.     
\end{proposition}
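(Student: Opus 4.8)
The plan is to build the required total factorization explicitly, exploiting the complements of restriction idempotents that are available because $\mathbb{X}$ is classical. Given $f : A \to B$, I propose to set
\[ \mathcal{T}(f) := f\iota_0 \vee \overline{f}^c\, t_A \iota_1 : A \to B \oplus \mathsf{1}, \]
with $\iota_0 : B \to B \oplus \mathsf{1}$ and $\iota_1 : \mathsf{1} \to B \oplus \mathsf{1}$ the coproduct injections and $t_A$ the unique total map to the restriction terminal object. Morally, this is the map that behaves as $f$ where $f$ is defined and lands in the error summand $\mathsf{1}$ on the complementary domain $\overline{f}^c$. The first thing to verify is that the join exists: in fact $f\iota_0$ and $\overline{f}^c t_A\iota_1$ are disjoint, since $\overline{f\iota_0} = \overline{f}$ (as $\iota_0$ is total) and then $\overline{f}\,\overline{f}^c t_A \iota_1 = 0$ by Lemma \ref{lem:ecomp}.(\ref{lem:ecomp.ii}) together with the annihilation of zeroes.

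Granting the join, the existence half is short. Totality of $\mathcal{T}(f)$ follows from Lemma \ref{lem:join}.(\ref{lem:join.iv}) and Lemma \ref{lem:ecomp}.(\ref{lem:ecomp.ii}), giving $\overline{\mathcal{T}(f)} = \overline{f} \vee \overline{f}^c = 1_A$. For the factorization $\mathcal{T}(f)\iota^\circ_0 = f$, I post-compose the join with $\iota^\circ_0$ (Lemma \ref{lem:join}.(\ref{lem:join.iii})) and use $\iota_0\iota^\circ_0 = 1_B$ and $\iota_1\iota^\circ_0 = 0$ from Lemma \ref{lem:restcoprodzero}.(\ref{lem:restcoprodzero.iii}), obtaining $f \vee 0 = f$.

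The substance of the argument is uniqueness, which I expect to be the main obstacle. Suppose $h : A \to B \oplus \mathsf{1}$ is total with $h\iota^\circ_0 = f$. Writing $1_{B\oplus\mathsf{1}} = \iota^\circ_0\iota_0 \vee \iota^\circ_1\iota_1$ (Lemma \ref{lem:class-coprod}.(\ref{lem:class-coprod.iv})) and precomposing by $h$ via \textbf{[J.3]} gives $h = f\iota_0 \vee h\iota^\circ_1\iota_1$, so everything reduces to identifying $h\iota^\circ_1 : A \to \mathsf{1}$. Because $\mathsf{1}$ is the restriction terminal object, the terminal property forces every map $g$ into $\mathsf{1}$ to satisfy $g = \overline{g}\,t_A$, so it is enough to compute the restriction idempotent $\overline{h\iota^\circ_1}$ and show it equals $\overline{f}^c$. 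I will establish this through Lemma \ref{lem:ecomp}.(\ref{lem:ecomp.iv}) by checking the two complement conditions: the join condition $\overline{f} \vee \overline{h\iota^\circ_1} = 1_A$ comes from totality of $h$, after decomposing $h = \overline{f}h \vee \overline{h\iota^\circ_1}h$ using \textbf{[R.4]} and applying Lemma \ref{lem:join}.(\ref{lem:join.iv}); the disjointness condition $\overline{f}\,\overline{h\iota^\circ_1} = 0$ reduces, via \textbf{[R.3]} and \textbf{[R.4]}, to the identity $\overline{\iota^\circ_0}\,\iota^\circ_1 = (1_B \oplus 0)[0,1_{\mathsf{1}}] = 0$, computed with (\ref{eq:copair-comp}).

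Concluding $h\iota^\circ_1 = \overline{f}^c t_A$, and hence $h = f\iota_0 \vee \overline{f}^c t_A\iota_1 = \mathcal{T}(f)$, finishes the uniqueness and the proof. The one genuinely delicate point to watch is where totality of $h$ enters: it is precisely what upgrades the join condition to the full identity $1_A$ (rather than merely $\overline{h}$), and it is therefore indispensable for pinning down $\overline{h\iota^\circ_1}$ as the complement $\overline{f}^c$.
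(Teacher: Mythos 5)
Your proposal is correct and takes essentially the same approach as the paper: the same candidate map $\mathcal{T}(f) = f\iota_0 \vee \overline{f}^c t_A \iota_1$, the same disjointness/totality/factorization checks, and the same uniqueness strategy of decomposing $h = f\iota_0 \vee h\iota^\circ_1 \iota_1$, identifying $\overline{h\iota^\circ_1} = \overline{f}^c$ via Lemma \ref{lem:ecomp}.(\ref{lem:ecomp.iv}), and invoking the terminal-object identity $h\iota^\circ_1 = \overline{h\iota^\circ_1}t_A$. The only deviation is cosmetic: where the paper verifies the disjointness condition using join absorption (Lemma \ref{lem:join}.(\ref{lem:join.v})), you use \textbf{[R.4]}-style manipulation together with the direct computation $\overline{\iota^\circ_0}\,\iota^\circ_1 = 0$, which is equally valid.
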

\begin{proof} Let $\mathbb{X}$ be a classical distributive restriction category. Since $\mathbb{X}$ has restriction zeros by assumption, it remains to show that every map factors as in (\ref{diag:classified}). So consider an arbitrary map $f: A \to B$, and also the composites $f\iota_0: A \to B \oplus \mathsf{1}$ and $\overline{f}^c t_A \iota_1: A \to B \oplus \mathsf{1}$. We first show they are disjoint: 
\begin{align*}
    \overline{f\iota_0} \overline{f}^c t_A \iota_1 &=~  \overline{f} \overline{f}^c t_A \iota_1 \tag*{Lemma \ref{lem:rest}.(\ref{lem:rest.total.2}) and $\iota_0$ total} \\ 
    &=~ 0 \tag*{Lemma \ref{lem:ecomp}.(\ref{lem:ecomp.ii})} 
\end{align*}
So $f\iota_0 \perp \overline{f}^c t_A \iota_1$. Thus define the map $\mathcal{T}(f): A \to B \oplus  \mathsf{1}$ as the join of these disjoint composites: 
\begin{align}
    \mathcal{T}(f) := f\iota_0 \vee \overline{f}^c t_A \iota_1
\end{align}
 First we check that $\mathcal{T}(f)$ is total. So we compute: 
 \begin{align*}
  \overline{\mathcal{T}(f)} &=~  \overline{f\iota_0 \vee \overline{f}^c t_A \iota_1} \tag*{Def. of $\mathcal{T}$} \\
  &=~ \overline{f \iota_0} \vee \overline{\overline{f}^c t_A \iota_1} \tag*{Lemma \ref{lem:join}.(\ref{lem:join.iv})} \\
  &=~ \overline{f} \vee \overline{\overline{f}^c} \tag*{Lemma \ref{lem:rest}.(\ref{lem:rest.total.2}), and $\iota_j$ and $t_A$ total} \\ 
  &=~ \overline{f} \vee \overline{f}^c \tag*{Lemma \ref{lem:ecomp}.(\ref{lem:ecomp.0})} \\
  &=~ 1_A \tag*{Lemma \ref{lem:ecomp}.(\ref{lem:ecomp.ii})} 
 \end{align*}
Next we check that (\ref{diag:classified}) holds: 
\begin{align*} 
\mathcal{T}(f) \iota^\circ_0 &=~ (f\iota_0 \vee \overline{f}^c t_A \iota_1) \iota^\circ_0 \tag*{Def. of $\mathcal{T}$} \\
&=~ f\iota_0\iota^\circ_0 \vee \overline{f}^c t_A \iota_1\iota^\circ_0  \tag*{Lemma \ref{lem:join}.(\ref{lem:join.iii})} \\ 
&=~ f \vee 0 \tag*{Lemma \ref{lem:restcoprodzero}.(\ref{lem:restcoprodzero.iii})}\\
&=~ f \tag*{Lemma \ref{lem:join}.(\ref{lem:join.ii})} 
\end{align*}
 Lastly we need to show the uniqueness of $\mathcal{T}(f)$. So let $g: A \to B\oplus  \mathsf{1}$ be a total map such that $g \iota_0^\circ = f$. We first compute that: 
 \begin{align*}
   g &=~  g (\iota^\circ_0\iota_0 \vee \iota^\circ_1 \iota_1) \tag*{Lemma \ref{lem:class-coprod}.(\ref{lem:class-coprod.iv})} \\ 
     &=~ g\iota^\circ_0\iota_0 \vee g\iota^\circ_1 \iota_1 \tag*{\textbf{[J.3]}} \\
     &=~ f \iota_0 \vee g\iota^\circ_1 \iota_1 \tag*{Assump. on $g$}
 \end{align*}
 So we have that: 
 \begin{align}\label{galmost}
     g = f \iota_0 \vee g\iota^\circ_1 \iota_1
 \end{align}
Thus it remains to show that $g\iota^\circ_1= \overline{f}^c t_A$. To do so, we will prove that $\overline{f}^c$ is equal to $\overline{g\iota^\circ_1}$. We first compute that: 
 \begin{align}
     \overline{f}~ \overline{g \iota^\circ_1} &=~ \overline{\overline{f} g\iota^\circ_1} \tag*{\textbf{[R.3]}} \\
     &=~ \overline{\overline{f \iota_0}g\iota^\circ_1} \tag*{Lemma \ref{lem:rest}.(\ref{lem:rest.total.2}) and $\iota_0$ total} \\ 
     &=~ \overline{\overline{f \iota_0}\left(f \iota_0 \vee g\iota^\circ_1 \iota_1 \right)\iota^\circ_1} \tag*{(\ref{galmost})} \\
     &=~ \overline{f \iota_0 \iota^\circ_1} \tag*{Lemma \ref{lem:join}.(\ref{lem:join.v})} \\
     &=~ \overline{0} \tag*{Lemma \ref{lem:join}.(\ref{lem:join.iii})} \\
     &=~ 0 \tag*{Rest. zero}
 \end{align}
 So $ \overline{f}~ \overline{g \iota^\circ_1}=0$. Next we compute: 
 \begin{align*}
      \overline{f} \vee \overline{g \iota^\circ_1} &=~ \overline{f \iota_0} \vee \overline{g \iota^\circ_1\iota_1}  \tag*{Lemma \ref{lem:rest}.(\ref{lem:rest.total.2}) and $\iota_j$ total} \\ 
      &=~ \overline{f \iota_0 \vee g\iota^\circ_1 \iota_1} \tag*{Lemma \ref{lem:join}.(\ref{lem:join.iv})} \\
      &=~ \overline{g} \tag*{(\ref{galmost})} \\
      &=~ 1_A \tag*{$g$ total}
 \end{align*}
 So $  \overline{f} \vee \overline{g \iota^\circ_1}=1_A$. Thus by Lemma \ref{lem:ecomp}.(\ref{lem:ecomp.iv}), it follows that:
 \begin{align} \label{gfc}
    \overline{f}^c = \overline{g\iota^\circ_1} 
 \end{align} 
Lastly, recall that in a Cartesian restriction category, for any map of type $h: A \to \mathsf{1}$, we have that $h = \overline{f}t_A$ \cite[Prop 2.8]{cockett2012differential}. Since $g\iota^\circ_1: A \to \mathsf{1}$, we then also have that: 
 \begin{align} \label{git}
    g\iota^\circ_1 = \overline{g\iota^\circ_1}t_A
 \end{align} 
Thus we finally compute that: 
 \begin{align*}
g &=~  f \iota_0 \vee g\iota^\circ_1 \iota_1  \tag*{(\ref{galmost})} \\
&=~ f\iota_0 \vee \overline{g\iota^\circ_1} t_A \iota_1 \tag*{(\ref{git})} \\
&=~ f\iota_0 \vee \overline{f}^c t_A \iota_1 \tag*{(\ref{gfc})}\\
&=~ \mathcal{T}(f) \tag*{Def. of $\mathcal{T}(f)$}
 \end{align*}
So $\mathcal{T}(f)$ is unique. Therefore we conclude that $\mathbb{X}$ is classically classified. 
 \end{proof}

\begin{corollary}If $\mathbb{X}$ is a classical distributive restriction category, then we have a restriction isomorphism $\mathbb{X} \cong \mathcal{T}\left[ \mathbb{X} \right]_{\_ \oplus \mathsf{1}}$. 
\end{corollary}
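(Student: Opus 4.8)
The plan is simply to chain together the two results immediately preceding this corollary, since all the substantive work has already been carried out. First I would invoke the preceding proposition, which establishes that any classical distributive restriction category $\mathbb{X}$ is classically classified. This is the essential input: it guarantees that every map $f : A \to B$ factors uniquely as a total map $\mathcal{T}(f) : A \to B \oplus \mathsf{1}$ through the classifying map $\iota^\circ_0$, with $\mathcal{T}(f) = f\iota_0 \vee \overline{f}^c t_A \iota_1$.

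Next I would apply Prop \ref{prop:classified1}, which takes an arbitrary classically classified distributive restriction category and shows that it is classified with classifier $\mathcal{R}(A) = A \oplus \mathsf{1}$ and classifying map $\varepsilon_A = \iota^\circ_0$, and moreover that the induced monad on $\mathcal{T}[\mathbb{X}]$ is precisely the exception monad $\_ \oplus \mathsf{1}$, with the induced restriction structure matching that of Prop \ref{d+1rest}. By that proposition (which in turn rests on the general Kleisli presentation of Prop \ref{prop:classkleisli}), $\mathbb{X}$ is restriction isomorphic to $\mathcal{T}[\mathbb{X}]_{\_ \oplus \mathsf{1}}$. Composing the two implications yields the restriction isomorphism $\mathbb{X} \cong \mathcal{T}[\mathbb{X}]_{\_ \oplus \mathsf{1}}$ asserted here.

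Since both ingredients are already in hand, there is essentially no obstacle to overcome; the corollary is a bookkeeping consequence of the two previous statements. The only point worth confirming is that the notions of equivalence align: Prop \ref{prop:classkleisli} delivers a genuine isomorphism of restriction categories, namely the functor $\mathcal{T}_\sharp$ preserving the restriction operator together with its inverse $\mathcal{T}^{-1}_\sharp$, which is exactly the restriction isomorphism $\mathbb{X} \cong \mathcal{T}[\mathbb{X}]_{\_ \oplus \mathsf{1}}$ claimed in the statement.
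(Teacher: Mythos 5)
Your proposal is correct and matches the paper's intended argument exactly: the corollary is stated without proof precisely because it follows by chaining the immediately preceding proposition (classical implies classically classified) with Prop \ref{prop:classified1} (classically classified implies restriction isomorphic to $\mathcal{T}[\mathbb{X}]_{\_ \oplus \mathsf{1}}$, via the Kleisli machinery of Prop \ref{prop:classkleisli}). Your closing check that the conclusion is a genuine restriction isomorphism, given by $\mathcal{T}_\sharp$ and $\mathcal{T}^{-1}_\sharp$, is exactly the right point to confirm and nothing more is needed.
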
 

Bringing all of these results together, we conclude this paper with the following statement characterizing classical distributive restriction categories. 

 \begin{theorem}\label{thm2} For a distributive restriction category $\mathbb{X}$, the following are equivalent: 
 \begin{enumerate}[{\em (i)}]
\item $\mathbb{X}$ is classical;
\item $\mathbb{X}$ has classical products;
\item $\mathbb{X}$ is classically classified;
\item There is a distributive category $\mathbb{D}$ such that $\mathbb{X}$ is restriction equivalent to $\mathbb{D}_{\_ \oplus \mathsf{1}}$. 
\end{enumerate}
\end{theorem}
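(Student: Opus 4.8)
The plan is to prove the theorem not by any fresh argument but by weaving together the equivalences already established in Sections \ref{sec:CDRC} and \ref{sec:classified} into a single web of implications. The cleanest route is to establish (i) $\Leftrightarrow$ (ii), then (i) $\Leftrightarrow$ (iii), and finally (iii) $\Leftrightarrow$ (iv); transitivity then delivers the full equivalence of all four conditions.

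First I would observe that (i) $\Leftrightarrow$ (ii) is precisely Theorem \ref{thm:cdrc}, itself assembled from Proposition \ref{prop:classtoclassprod} (classical $\Rightarrow$ classical products) and Proposition \ref{prop:cptoc} (classical products $\Rightarrow$ classical). For (i) $\Leftrightarrow$ (iii), the direction (iii) $\Rightarrow$ (i) is Proposition \ref{prop:classified2}, which routes through the restriction isomorphism $\mathbb{X} \cong \mathcal{T}[\mathbb{X}]_{\_ \oplus \mathsf{1}}$ of Proposition \ref{prop:classified1} together with the fact (Proposition \ref{prop:d+1class}) that the Kleisli category of an exception monad is classical; the converse (i) $\Rightarrow$ (iii) is the proposition immediately preceding Corollary \ref{cor:classclass}, whose proof manufactures the classifying factorization $\mathcal{T}(f) = f\iota_0 \vee \overline{f}^c t_A \iota_1$ from joins and the complement of the restriction. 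Finally, (iii) $\Leftrightarrow$ (iv) is exactly Corollary \ref{cor:classclass}.

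Since every individual link has already been discharged in the cited results, the only work remaining at this stage is organizational: to check that these implications close into the stated cycle, and to confirm that the notion of equivalence appearing in (iv) matches the restriction \emph{equivalence} supplied by Corollary \ref{cor:classclass} (a slightly weaker statement than the restriction \emph{isomorphism} of Proposition \ref{prop:classified1}, but exactly what (iv) asserts). I therefore do not expect any genuine obstacle here; the real mathematical weight was borne earlier, above all in Proposition \ref{prop:cptoc}, where the universal property of the classical product together with decisions is used to build joins and relative complements, and in the classification proposition, where $\overline{f}^c$ provides the \emph{undefined} branch of the total map into $B \oplus \mathsf{1}$.
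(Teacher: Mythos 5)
Your proposal is correct and is essentially the paper's own proof: the paper assembles Theorem \ref{thm:cdrc} for (i) $\Leftrightarrow$ (ii), Proposition \ref{prop:classified2} together with the proposition that a classical distributive restriction category is classically classified for (i) $\Leftrightarrow$ (iii), and Corollary \ref{cor:classclass} for (iii) $\Leftrightarrow$ (iv), exactly as you describe. One trivial slip in citation placement: the unnamed proposition supplying (i) $\Rightarrow$ (iii) (the one constructing $\mathcal{T}(f) = f\iota_0 \vee \overline{f}^c t_A \iota_1$) appears just \emph{after} Corollary \ref{cor:classclass} in the paper, not before it, but your identification of its content is unambiguous and the logic is unaffected.
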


\bibliographystyle{plain}      % mathematics and physical sciences
\bibliography{cdrcbib}   % name your BibTeX data base

\end{document}